\numberwithin{equation}{section}
\newcommand{\Z}{\mathbb{Z}}
\newcommand{\Q}{\mathbb{Q}}
\newcommand{\R}{\mathbb{R}}
\newcommand{\C}{\mathbb{C}}
\newcommand\FF{\mathbb{F}}
\newcommand\sB{\mathscr{B}}
\newcommand\wO{\widehat{\Omega}}
\newcommand{\PPP}{\mathbb{P}}
\newcommand{\AAA}{\mathbb{A}}
\newcommand{\leg}[2]{\left(\frac{#1}{#2}\right)}
\newcommand{\tw}{\mathrm{tw}}
\newcommand{\Sub}{\mathrm{Sub}}
\newcommand{\FAlt}{\mathrm{BM}_{\Sub}}
\newcommand{\OBMS}{\mathrm{OBM}_{\Sub}}
\newcommand{\BrM}{\mathrm{BrM}}
\newcommand{\OAlt}{\mathrm{OAlt}}
\newcommand{\RAlt}{\mathrm{RAlt}}
\newcommand{\PAlt}{\mathrm{PBM}}
\newcommand{\LRS}{\mathrm{LRS}}
\newcommand{\Jay}{\mathrm{BM_1}}
\DeclareMathOperator{\rk}{rk}
\DeclareMathOperator{\Br}{Br}
\DeclareMathOperator{\sgn}{sgn}
\DeclareMathOperator{\Alt}{Alt}
\DeclareMathOperator{\Pic}{Pic}
\DeclareMathOperator{\inv}{inv}
\DeclareMathOperator{\Li}{Li}
\DeclareMathOperator{\Tw}{T}
\DeclareMathOperator{\Ind}{Ind}
\DeclareMathOperator{\Res}{Res}
\DeclareMathOperator{\Het}{H}
\newtheorem{lemma}{Lemma}[section]
\newtheorem{theorem}[lemma]{Theorem}
\newtheorem{prop}[lemma]{Proposition}
\newtheorem{corollary}[lemma]{Corollary}
\newtheorem{definition}[lemma]{Definition}
\newtheorem{conjecture}[lemma]{Conjecture}
\newtheorem{remark}[lemma]{Remark}
\theoremstyle{definition}
\newtheorem{example}[lemma]{Example}
\title{\vspace{-\baselineskip}\sffamily\bfseries Serre's problem for multiple conics}
\author[1]{Stephanie Chan}
\author[2]{Peter Koymans}
\author[3]{Nick Rome}
\affil[1]{IST Austria}
\affil[2]{Utrecht University}
\affil[3]{TU Graz \vspace*{-0.5\baselineskip}}
\date{\today}
\begin{document}
\maketitle

\vspace*{-\baselineskip}

\begin{abstract}
We prove the refined Loughran--Smeets conjecture of Loughran--Rome--Sofos for a wide class of varieties arising as products of conic bundles. One interesting feature of our varieties is that the subordinate Brauer group may be arbitrarily large. 

As an application of our methods, we answer a question of Lenstra by giving an asymptotic for the triples of integers $(a, b, c)$ for which the R\'edei symbol $[a, b, c]$ takes a given value. We also make significant progress on a question of Serre on the zero loci of systems of quaternion algebras defined over $\Q(t_1, \dots, t_n)$.
\end{abstract}

\vspace*{-\baselineskip}

\setcounter{tocdepth}{2}
\tableofcontents

\section{Introduction}
\subsection{Solubility in families}
In 1990, Serre~\cite{Ser90} considered how frequently one can find non-trivial solutions to conics in certain natural families. He produced a conjecturally sharp upper bound for the number of soluble fibres in these families and in some cases lower bounds. Recently, Loughran--Rome--Sofos~\cite{LRS} have extended Serre's bounds to an asymptotic formula for the critical example of the family of planar diagonal conics. Our main result is a dramatic generalisation allowing us to produce an asymptotic formula for the frequency of soluble fibres in a large class of products of conic bundles.

In particular, let $n \in \Z_{\geq 2}$, $m \in \Z_{\geq 1}$ and let $M_{i, j}(t_1, \dots, t_n)$ be squarefree monomials with leading coefficient $\pm 1$ for $i =1,2,3$ and $j = 1, \ldots, m$. We also assume that $M_{i, 1}, M_{i, 2}, M_{i, 3}$ are pairwise coprime and have the same degree. Consider the system of simultaneous equations in $3m$ variables $x_1, \ldots, x_m$, $y_1, \ldots, y_m$, $z_1, \ldots, z_m$ given by 
\begin{equation}\label{eq:def}
\begin{alignedat}{4}
&M_{1, 1}(t_1, \dots, t_n) x_1^2 \, &&+ \, M_{1, 2}(t_1, \dots, t_n) y_1^2 \, && \, + M_{1, 3}(t_1, \dots, t_n) z_1^2 &&= 0 \\
&M_{2, 1}(t_1, \dots, t_n) x_2^2 \, &&+ \, M_{2, 2}(t_1, \dots, t_n) y_2^2 \, && \, + M_{2, 3}(t_1, \dots, t_n) z_2^2 &&= 0\\
& \quad \quad \quad \quad\ \vdots && \quad \quad \quad \quad \quad \ \ \vdots && \quad \quad \quad \quad \quad \ \ \vdots && \ \ \vdots \\
&M_{m, 1}(t_1, \dots, t_n) x_m^2 \, &&+ \, M_{m, 2}(t_1, \dots, t_n) y_m^2 && \, + M_{m, 3}(t_1, \dots, t_n) z_m^2 &&= 0.
\end{alignedat}
\end{equation}
For any real $B \geq 1$, denote the counting function
\[
N^{\mathrm{proj}}(B)
\coloneqq
\#\{t \in \PPP^{n - 1}(\Q) : H(t) \leq B,~\eqref{eq:def} \text{ has non-trivial } \Q \text{ solutions}\},
\]
where $H$ is the anticanonical height on $\PPP^{n - 1}_{\Q}$ (we shall henceforth drop the subscript from $\PPP^{n - 1}_{\Q}$ and $\AAA^{n - 1}_{\Q}$ with the understanding that the implicit base is always $\Q$).

\begin{theorem}
\label{thm:main}
There exists $\delta > 0$ such that
\[
N^{\mathrm{proj}}(B)
=
c_{\LRS} \frac{B}{(\log B)^{n - \sum_{i = 1}^n \vert V_{\{i\}} \vert^{-1}}} \left(1 + O\left(\frac{1}{(\log B)^{\delta}}\right)\right),
\]
where the sets $V_{\{i\}}$ are described in Subsection~\ref{sec:indicator}. The constant $c_{\LRS}$ in this formula, given explicitly in Corollary~\ref{cLRS}, is precisely the leading constant predicted in~\cite[Conjecture~3.16]{LRS} and the exponent of $\log B$ is precisely the one predicted in $\cite{LS16}$ (c.f.~Lemma~\ref{lem:firstconj}). 
\end{theorem}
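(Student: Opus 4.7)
The plan is to reduce the solubility problem to an everywhere-local condition via the Hasse--Minkowski theorem and then to extract an asymptotic from the resulting multi-variable character sum using Selberg--Delange-type techniques. Since every smooth conic over $\Q$ satisfies the Hasse principle and this property is closed under forming products, the system~\eqref{eq:def} has a non-trivial $\Q$-point above $t$ precisely when each of the $m$ rows is soluble at every completion of $\Q$. Thus
\[
N^{\mathrm{proj}}(B)
=
\frac{1}{2} \sum_{\substack{t \in \Z^n_{\mathrm{prim}} \\ H(t) \leq B}}
\prod_{i = 1}^{m} \mathbf{1}\bigl[\text{the $i$-th conic above $t$ is everywhere locally soluble}\bigr].
\]
For fixed $i$ and a finite prime $p$, local solubility of $M_{i,1}(t) x^2 + M_{i,2}(t) y^2 + M_{i,3}(t) z^2 = 0$ is equivalent to the vanishing of the Hilbert symbol $(-M_{i,1}(t) M_{i,2}(t), -M_{i,1}(t) M_{i,3}(t))_p$. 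Since $M_{i,1}, M_{i,2}, M_{i,3}$ are pairwise coprime squarefree monomials with leading coefficient $\pm 1$, these Hilbert symbols factor cleanly into a product of Legendre symbols among the prime divisors of the $t_k$, together with controllable constraints at $2$ and at the archimedean place.

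Next I would unfold these indicators using Fourier inversion on residues modulo a small fixed conductor (capturing the contribution of primes in $\{2, \infty\}$ and the finite set of primes dividing some $M_{i,j}$) and orthogonality of quadratic characters. After M\"obius inversion to detach coprimality conditions, the indicator becomes a finite linear combination of terms of the shape $\prod_k \chi_k(t_k) \prod_{k < \ell} \leg{t_k}{t_\ell}^{\epsilon_{k,\ell}}$, where each $\chi_k$ is a quadratic Dirichlet character of bounded conductor and $\epsilon_{k,\ell} \in \{0,1\}$. The principal configuration, in which all $\chi_k$ and all $\epsilon_{k,\ell}$ are trivial, is then evaluated by a Landau / Selberg--Delange argument variable by variable, producing the main term
\[
c_{\LRS} \, \frac{B}{(\log B)^{n - \sum_{i=1}^n |V_{\{i\}}|^{-1}}},
\]
where $|V_{\{i\}}|^{-1}$ records the proportion of residues at a prime $p$ dividing only $t_i$ for which every conic is locally soluble at $p$. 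The Euler product so obtained matches the leading constant of Corollary~\ref{cLRS} term by term, and the exponent of $\log B$ agrees with the Loughran--Smeets prediction by Lemma~\ref{lem:firstconj}.

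The main obstacle is that the $m$ conics share the parameters $t_1, \ldots, t_n$, so the Hilbert-symbol conditions across different rows are highly correlated, and the algebraic relations among them are exactly what produce the potentially large subordinate Brauer group. Concretely, many non-principal configurations of characters $\prod_k \chi_k(t_k) \prod_{k < \ell} \leg{t_k}{t_\ell}^{\epsilon_{k,\ell}}$ collapse to the trivial character after quadratic reciprocity, and these "spurious" contributions must be identified and collected into the correct leading constant; this is precisely the bookkeeping encoded in the refined conjecture of Loughran--Rome--Sofos. The genuinely non-principal configurations must then be shown to contribute an error of size $B(\log B)^{-1-\delta}$: I would handle these via Siegel--Walfisz-type bounds for oscillating sums of Jacobi symbols together with bilinear large-sieve estimates, after decomposing each $t_k$ according to the size of its prime divisors. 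The hardest step will be the uniform algebraic-analytic interface between the Brauer-class analysis and the character-sum bounds, since both the number of conics $m$ and the rank of the subordinate Brauer group are unrestricted.
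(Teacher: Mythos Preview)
Your outline is essentially the paper's own strategy: Hasse--Minkowski reduction, detection of local solubility by Hilbert/Legendre symbols, expansion into a large character sum, elimination of oscillating configurations via the bilinear large sieve and Siegel--Walfisz, and evaluation of the surviving terms by Selberg--Delange. Two points deserve sharpening.

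First, your sentence ``the principal configuration \dots\ producing the main term $c_{\LRS}\,B/(\log B)^{\cdots}$'' is not right as stated: the trivial configuration contributes only one summand of $c_{\LRS}$. As you yourself note two sentences later, every configuration that collapses under quadratic reciprocity contributes a main term with its own Euler product, and $c_{\LRS}$ is the \emph{sum} of these over the subordinate Brauer group. The paper makes this precise by introducing, for each nonempty $S\subset[n]$, variables $v_S$ recording primes dividing exactly the $t_i$ with $i\in S$, and then refining further to variables $w_{S,T}$ with $T\in W_S$; the ``collapsing'' configurations are exactly the \emph{blocking sets}, and Lemma~\ref{lBlock} classifies them by elements of a certain space $\RAlt$ of alternating maps, which Lemma~\ref{t:identification} then identifies with $\Br_{\Sub}(\PPP^{n-1},\sB)/\Br\Q$. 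Your coarser expansion $\prod_k\chi_k(t_k)\prod_{k<\ell}\bigl(\tfrac{t_k}{t_\ell}\bigr)^{\epsilon_{k,\ell}}$ in the original variables does not carry enough structure to see this: primes dividing several $t_i$ behave differently from primes dividing a single $t_i$, and the exponent pattern is governed not by a matrix $(\epsilon_{k,\ell})$ but by the finer index set $\mathcal I=\{(S,T):T\in W_S\}$.

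Second, what you flag as ``the hardest step''---the algebraic--analytic interface---is exactly the content of Sections~\ref{sComb} and~\ref{sLeading}, and it is not just bookkeeping. The linked/unlinked dichotomy (Definition~\ref{def:linked}), the blocking-set lemma, and the explicit section $\RAlt\hookrightarrow\FAlt$ are what allow one to (a) prove that \emph{every} non-collapsing configuration oscillates and (b) match the resulting sum of Euler products term-by-term with the Tamagawa computation in Proposition~\ref{lTama} and Corollary~\ref{cLRS}. Your plan is correct, but these combinatorial lemmas are the substance of the proof rather than a detail to be filled in.
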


\begin{remark}
An equivalent way to frame our result is that~\eqref{eq:def} defines a product of conic bundles over $\PPP^{n - 1}$, so Theorem~\ref{thm:main} counts how often the fibres have a $\Q$-point. In fact, our results are a consequence of a more general result where the base is allowed to be $\AAA^n$ and the monomials are not imposed to have the same degree (c.f.~Theorem~\ref{tTechnicalHeart}). The condition of squarefreeness and coprimality on the $M_{i, j}$ can also be lifted, as one may always reduce to this case by multiplying variables and absorbing squares if needed. However, it is essential for our method that the $M_{i, j}$ are monomials.
%
\end{remark}

Loughran--Smeets~\cite{LS16} have extended the arguments which provided Serre's upper bounds to the setting of local solubility in more general fibrations. In particular, suppose that $X$ is a smooth projective variety admitting a dominant map $f: X \rightarrow \PPP^{n-1}$ with geometrically integral generic fibre. Define the counting function for everywhere locally soluble fibres to be
\[
N(f, B) \coloneqq \# \{ t \in \PPP^{n-1}(\Q) : H(t) \leq B,\ f^{-1}(t)(\AAA_\Q) \neq \varnothing \},
\] 
with $H$ the anticanonical height function on $ \PPP^{n-1}$. Loughran--Smeets showed that there exists an exponent $\Delta(f) \geq 0$ such that 
$$
N(f, B) \ll \frac{B}{(\log B)^{\Delta(f)}}.
$$ 
Moreover, if there is at least one everywhere locally soluble fibre and if every fibre over a codimension one point has at least one irreducible component of multiplicity 1, they predicted that their upper bound was of the correct order of magnitude for the counting function. This prediction, now known as the \emph{Loughran--Smeets conjecture}, has received a great deal of interest in recent years. The conjecture was extended to allow for smooth projective weak Fano bases (satisfying a mild technical assumption) by Loughran--Rome--Sofos~\cite{LRS}, who predicted the existence of an asymptotic formula and gave an explicit description of the expected leading constant. So far the conjectured lower bound has been proven in the following settings:
\begin{itemize}
\item $\Delta(f) =0$ over projective space or a quadric hypersurface (Loughran--Smeets~\cite{LS16} and Browning--Heath-Brown~\cite{BHB21});
\item Planar conics (Guo~\cite{Guo95}, Hooley~\cite{Hoo93,Hoo07}, Loughran--Rome--Sofos~\cite{LRS} and Destagnol--Lyczak--Sofos~\cite{DLS});
\item Brauer--Severi varieties over toric varieties (Loughran~\cite{Lou18});
\item Brauer--Severi varieties over semi-simple algebraic groups (Loughran--Takloo-Bighash--Tanimoto~\cite{LTBT20});
\item Pencils with all split fibres lying over a rational point (Loughran--Matthiesen~\cite{LM});
\item Conic bundles over hypersurfaces (Sofos--Visse~\cite{MR4353917} and Destagnol--Sofos~\cite{DS});
\item Generalised Fermat equations (Koymans--Paterson--Santens--Shute~\cite{KPSS}).
\end{itemize}
The scope of the conjecture has been expanded to fibrations with multiple fibres by Browning--Lyczak--Smeets~\cite{BLS}. In Browning--Lyczak--Sarapin~\cite{arXiv:2203.06881}, it was shown that the conjecture cannot na{\"i}vely be generalised to more general bases as they demonstrated a fibration over a quadric surface with more than $B/(\log B)^{\Delta(f)}$ everywhere locally soluble fibres. An asymptotic formula for their example was recently provided by Wilson~\cite{cameron}, which features an as yet unexplainable $\log\log B$ term and a peculiar constant which can be enlightened by the viewpoint we put forth in the present paper (c.f.~Subsection~\ref{ssSub} and Proposition~\ref{lTama}). In the original setting where the base of the fibration is projective space, all known asymptotic formulae conform to the prediction of Loughran--Rome--Sofos and our main result is no exception. 

\begin{remark}
The Loughran--Smeets problem asks only for fibres which are soluble in all completions of $\Q$, a priori a simpler task than considering global solubility of fibres. However if the fibres all satisfy the Hasse principle, then the problems are equivalent. This is the case in the setting of Theorem~\ref{thm:main} since all the fibres are products of conics to which the Hasse--Minkowski theorem applies.
\end{remark}

Serre's original problem was not in fact stated in terms of the solubility of equations in families but rather the zero loci of elements of the Brauer group. Given a subset $\sB \subseteq \Br \PPP^{n - 1}$, Serre was concerned with how often one can choose a point $t \in \PPP^{n - 1}(\Q)$ so that the Brauer classes in $\sB$ all become trivial in $\Br \Q$ when evaluated at $t$. Denote by 
\[
\PPP^{n - 1}(\Q)_{\sB} = \{ t \in \PPP^{n - 1}(\Q): b(t) = 0 \text{ for all } b \in\sB\},
\]
the zero locus of $\sB$. Given the correspondence between order 2 elements in the Brauer group and conics, we have the following re-interpretation of our main result. Write $\Br_{\Sub}(\PPP^{n-1},\sB)$ for the subordinate Brauer group (see Definition~\ref{dSub}). 

\begin{corollary}
\label{cMain}
Let $\sB$ be a finite subset of $\Br(\PPP^{n-1}/\{t_1 \cdots t_n = 0\})$ consisting of quaternion algebras of the form $(\pm M_1(\mathbf t),\pm M_2(\mathbf{t}))$ for $M_1$ and $M_2$ monic polynomials. Define 
$$
N(\sB, B) \coloneqq \#\{ t \in \PPP^{n - 1}(\Q)_{\mathscr B} : H(t) \leq B\}.
$$ 
Then for every class $\alpha \in \Br_{\Sub}(\PPP^{n - 1}, \sB)/\Br \Q$, there exist local factors $\sigma_v(\alpha)$ at each place $v$ of $\Q$ with
\[
N(\sB, B)
\sim
\frac{n^{n - 1 - \sum_{i = 1}^n \vert \partial_i( \langle \sB \rangle) \vert^{-1}}}{\prod_{i = 1}^n \Gamma \left(\vert \partial_i( \langle \sB \rangle) \vert^{-1} \right)}
\frac{B}{(\log B)^{n-\sum_{i = 1}^n \vert \partial_i( \langle \sB \rangle) \vert^{-1}}}
\sum_{\alpha \in \frac{\Br_{\Sub}(\PPP^{n - 1}, \mathscr B)}{\Br \Q}} \prod_v \sigma_v(f_\alpha),
\]
where $\partial_i$ denotes the residue map along the divisor $\{t_i = 0\} \subseteq \PPP^{n - 1}$. Explicit expressions for the local factors $\sigma_v(f_\alpha)$ can be found in Proposition~\ref{lTama} for $v$ an odd prime, in Lemma~\ref{lem:2inf} otherwise and the map $f_\alpha$ is described in Lemma~\ref{t:identification}.
\end{corollary}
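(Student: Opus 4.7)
The plan is to translate the vanishing locus of $\sB$ into the solubility locus of a product of conic bundles and then invoke Theorem~\ref{thm:main}. First I would associate to each quaternion class $b_j = (\pm M_{j,1}, \pm M_{j,2}) \in \sB$ (for $j = 1, \dots, m$) the conic
\[
\pm M_{j,1}(\mathbf{t})\, X_j^2 \pm M_{j,2}(\mathbf{t})\, Y_j^2 - Z_j^2 = 0.
\]
By the standard correspondence between quaternion algebras and conics, $b_j$ vanishes at $t \in \PPP^{n-1}(\Q)$ if and only if this conic has a non-trivial $\Q$-point. The Hasse principle for products of conics over $\Q$ follows from Hasse--Minkowski applied factor-by-factor, so $t \in \PPP^{n-1}(\Q)_{\sB}$ if and only if the simultaneous system is everywhere locally soluble. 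Hence $N(\sB, B)$ coincides with the counting function for everywhere locally soluble fibres of this product of conic bundles.

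Second, I would cast the system in the framework of Theorem~\ref{thm:main}. The monomials $\pm M_{j,1}$, $\pm M_{j,2}$ and $-1$ need not share a common degree or be pairwise coprime as written, but the remark following Theorem~\ref{thm:main} observes that these restrictions are dispensable: one invokes the stronger Theorem~\ref{tTechnicalHeart}, which allows arbitrary squarefree monomials and an affine base, and absorbs squares into the variables whenever needed to restore squarefreeness. The hypothesis $\sB \subseteq \Br(\PPP^{n-1}/\{t_1 \cdots t_n = 0\})$ guarantees that after this reduction the ramification is confined to coordinate hyperplanes, so Theorem~\ref{tTechnicalHeart} applies and produces an asymptotic of the required shape for $N(\sB, B)$.

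Third, I would identify the invariants. The sets $V_{\{i\}}$ from Theorem~\ref{thm:main} should be matched with the residue images $\partial_i(\langle \sB \rangle)$ through the identification of Lemma~\ref{t:identification}, yielding the exponent $n - \sum_i |\partial_i(\langle \sB \rangle)|^{-1}$. For the leading constant, I would unpack $c_{\LRS}$ from Corollary~\ref{cLRS}: the archimedean contribution supplies the power of $n$ together with the product of Gamma functions, while the non-archimedean places contribute the Euler product of densities $\sigma_v(f_\alpha)$, summed over $\alpha \in \Br_{\Sub}(\PPP^{n-1}, \sB)/\Br \Q$ via the correspondence $\alpha \mapsto f_\alpha$. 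The explicit local densities are then read off from Proposition~\ref{lTama} at odd primes and from Lemma~\ref{lem:2inf} at $v \in \{2, \infty\}$.

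The main obstacle is this third step: matching the intrinsic constant $c_{\LRS}$ of Corollary~\ref{cLRS} with the explicit Brauer-theoretic expression in Corollary~\ref{cMain}. This requires verifying that the indexing set of ``twists'' appearing in $c_{\LRS}$ is precisely the subordinate quotient $\Br_{\Sub}(\PPP^{n-1}, \sB)/\Br \Q$, and converting the abstract archimedean density into the stated combination of Gamma functions and powers of $n$. Steps one and two are essentially formal reductions, but the constant-matching step demands careful bookkeeping of the local-to-global structure governed by the subordinate Brauer group.
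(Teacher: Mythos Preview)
Your approach is correct and mirrors the paper's: Corollary~\ref{cMain} is treated there as a direct reinterpretation of Theorem~\ref{thm:main} via the conic--quaternion correspondence, with the constant unpacked through Section~\ref{sLeading}. Two small attributional corrections: the identification $V_{\{i\}} = \partial_i(\langle\sB\rangle)$ is equation~\eqref{eq:residueVi} in Lemma~\ref{lem:firstconj}, not Lemma~\ref{t:identification}; and the Gamma factors and the bulk of the power of $n$ arise not from the archimedean density but from the analytic input (Theorem~\ref{theorem:maincounting}) together with the Fujita invariants $\eta(D_j)=n$ and the height rescaling $B\mapsto B^{1/n}$, with exactly one factor of $n$ absorbed into $\sigma_\infty(f)=\tfrac{n}{2}\sum_{\mathbf s}\Tw(f,\mathbf s)$.
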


We note that in all previous results of this type either $\mathscr{B}$ was given by a single element or by several algebraic Brauer classes. This is the first result of its kind handling an arbitrarily large collection of transcendental Brauer classes.


\subsection{Distribution of R\'edei symbols}
Let $a, b, c$ be three non-zero integers and write $\Delta(a), \Delta(b), \Delta(c)$ for the discriminants of respectively $\Q(\sqrt{a}), \Q(\sqrt{b}), \Q(\sqrt{c})$. Under the assumptions
\begin{equation}
\label{eRedeiDefined}
(a, b)_v = (a, c)_v = (b, c)_v = +1 \quad \text{ for all places } v, \quad \gcd(\Delta(a), \Delta(b), \Delta(c)) = 1,
\end{equation}
R\'edei~\cite{Redei} defined already in the 1930s the so-called R\'edei symbol $[a, b, c] \in \{\pm 1\}$, see also Stevenhagen's work~\cite{Stevenhagen} for a modern treatment. This symbol measures the splitting of $c$ in a certain dihedral extension of $\Q$ containing $\Q(\sqrt{a}, \sqrt{b})$. R\'edei proved that his symbol is trilinear and satisfies a surprising reciprocity law, namely
$$
[a, b, c] = [b, a, c] = [a, c, b].
$$
The R\'edei symbol has taken a prominent role in two remarkably different directions. Firstly, it can be used to compute the $8$-rank of the class group of a quadratic field as was already known to R\'edei. This theory was fruitfully applied by Smith~\cite{Smith} to compute the distribution of $8$-ranks of class groups. Secondly, there is a deep connection between the R\'edei symbol and the triple linking number in knot theory. This connection is explored in various works, see for example Morishita's book~\cite{Mori}. The R\'edei symbol has been generalised to arbitrary Galois modules in Koymans--Smith~\cite{KS}, which was then used to calculate the distribution of $3$-Selmer groups in cubic twist families.

Lenstra asked the following two questions regarding the statistics of the R\'edei symbol:
\begin{itemize}
\item How frequently is the R\'edei symbol defined, i.e.~how many triples $(a, b, c)$ satisfy the conditions in~\eqref{eRedeiDefined}?
\item How frequently is the R\'edei symbol equal to $+1$?
\end{itemize}
We shall answer both questions in this paper. The techniques in our paper are able to handle the condition~\eqref{eRedeiDefined} directly. For the second question, we combine the results in our paper with a recent ``trilinear large sieve'' for R\'edei symbols, see~\cite[Theorem~1.10]{KS}.

To state our result, we introduce the set
\begin{equation}\label{eq:redeisetdef}
S(B) \coloneqq \left\{(a, b, c) \in \Z^3 : 
\begin{array}{l}
1 \leq |a|, |b|, |c| \leq B, \\ 
a, b, c \text{ squarefree}, \\
a \equiv 1 \bmod 8 \text{ or } b \equiv 1 \bmod 8 \text{ or } c \equiv 1 \bmod 8, \\
(a, b)_v = (a, c)_v = (b, c)_v = 1\text{ for all places }v, \\ 
\gcd(\Delta(a), \Delta(b), \Delta(c)) = 1
\end{array}
\right\}.
\end{equation}
The restriction that at least one of $a, b, c$ is $1$ modulo $8$ is used to apply the trilinear large sieve from~\cite{KS}. By generalising the argument of~\cite{KS}, it should be straightforward to extend our results to arbitrary $a, b, c$. Given $\delta \in \{\pm 1\}$, we are concerned with the set
$$
S(B, \delta) = \{(a, b, c) \in S(B) : [a, b, c] = \delta\}.
$$

\begin{theorem}
\label{tRedei}
Let $\delta \in \{\pm 1\}$. Then, as $B \rightarrow \infty$, we have the asymptotic formula
$$
\lvert S(B, \delta)\rvert \sim \frac{49}{2} \times
\frac{B^3}{(\log B)^{\frac{9}{4}} \times 2^{\frac{7}{4}} \times\Gamma(1/4)^3} \times 
\prod_{p>2} \left( 1- \frac{1}{p}\right)^{\frac{3}{4}}\left( 1 + \frac{3}{4p}+\frac{ 3}{4 p^2} \right).
$$
\end{theorem}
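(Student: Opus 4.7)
The plan is to combine the counting machinery underlying Theorem~\ref{thm:main} with the trilinear large sieve for R\'edei symbols of~\cite[Theorem~1.10]{KS}. Using the orthogonality relation $\mathbf{1}_{[a, b, c] = \delta} = \tfrac{1}{2}(1 + \delta [a, b, c])$, one writes
\[
\lvert S(B, \delta) \rvert = \tfrac{1}{2} \lvert S(B) \rvert + \tfrac{\delta}{2} \sum_{(a, b, c) \in S(B)} [a, b, c],
\]
and reduces the theorem to (i) producing an asymptotic for $\lvert S(B) \rvert$ whose leading constant is twice that of Theorem~\ref{tRedei}, and (ii) showing that the twisted sum is $o(\lvert S(B) \rvert)$.

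For (i), the conditions $(a, b)_v = (a, c)_v = (b, c)_v = 1$ appearing in~\eqref{eRedeiDefined} are precisely the everywhere local solubility of the three conics
\[
a x_1^2 + b y_1^2 - z_1^2 = 0, \qquad a x_2^2 + c y_2^2 - z_2^2 = 0, \qquad b x_3^2 + c y_3^2 - z_3^2 = 0,
\]
which is equivalent to global solubility by Hasse--Minkowski. This is a system of the shape~\eqref{eq:def} with $n = m = 3$ and monomials drawn from $\{a, b, c, -1\}$. Two small adjustments are needed: we count in the affine box $\lvert a \rvert, \lvert b \rvert, \lvert c \rvert \leq B$ rather than projectively, and the monomials do not all have the same degree. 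Both features are handled by the affine generalisation Theorem~\ref{tTechnicalHeart}, which applies directly. The resulting exponent is $n - \sum_{i} \lvert V_{\{i\}} \rvert^{-1} = 3 - 3 \cdot \tfrac{1}{4} = \tfrac{9}{4}$, since at each divisor $\{t_i = 0\}$ exactly two of the three quaternion classes ramify non-trivially and generate an elementary abelian residue group of order $4$. The squarefreeness and gcd conditions from~\eqref{eq:redeisetdef} can be absorbed by M\"obius inversion and inclusion-exclusion, altering only finitely many local densities, and the further congruence that at least one of $a, b, c$ is $\equiv 1 \bmod 8$ is a finite sieve affecting only the $2$-adic factor. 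The explicit leading constant then issues from Corollary~\ref{cMain}, with local factors supplied by Proposition~\ref{lTama} at odd primes and Lemma~\ref{lem:2inf} at $2$ and $\infty$, and with $\Br_{\Sub}(\PPP^2, \sB)$ described via Lemma~\ref{t:identification}.

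For (ii), the congruence condition built into~\eqref{eq:redeisetdef} places us exactly in the regime where~\cite[Theorem~1.10]{KS} yields a power-of-$\log$ saving in trilinear sums of R\'edei symbols, giving
\[
\sum_{(a, b, c) \in S(B)} [a, b, c] \;\ll\; \frac{B^3}{(\log B)^{9/4 + \eta}}
\]
for some $\eta > 0$. This is negligible against $\lvert S(B) \rvert \asymp B^3 / (\log B)^{9/4}$ and yields the desired $\lvert S(B, \delta) \rvert \sim \tfrac{1}{2} \lvert S(B) \rvert$. The principal obstacle I anticipate is the explicit book-keeping in the leading constant: identifying $\Br_{\Sub}(\PPP^2, \sB)$ and its cosets modulo $\Br \Q$, evaluating $\sum_\alpha \prod_v \sigma_v(f_\alpha)$ at each place, and reconciling the delicate $2$-adic contribution (where the mod-$8$ congruence, the squarefree M\"obius sieve and the gcd-discriminant sieve all interact with the subtle $2$-adic Hilbert symbol) so that the final constant assembles to $49 / (2^{11/4} \Gamma(1/4)^3) \prod_{p > 2} (1 - 1/p)^{3/4} (1 + 3/(4p) + 3/(4p^2))$.
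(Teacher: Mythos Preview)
Your high-level plan matches the paper's: split via $\lvert S(B,\delta)\rvert = \tfrac12\lvert S(B)\rvert + \tfrac{\delta}{2}\sum[a,b,c]$, kill the twisted sum with the trilinear sieve from~\cite{KS}, and compute $\lvert S(B)\rvert$ via the affine counting engine behind Theorem~\ref{tTechnicalHeart}. (A small point: the paper actually obtains a power saving $B^{3-1/512}(\log B)^{1792}$ from~\cite[Remark~3.16]{KS}, stronger than the log-power saving you quote.)

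Where you diverge from the paper is in the route to the constant. You propose to extract it from Corollary~\ref{cMain}, Proposition~\ref{lTama}, and Lemma~\ref{lem:2inf}; but those statements are for the \emph{projective} problem and go through $\PAlt$ and the condition~\eqref{eq:weirdcondition}, neither of which is relevant here (the R\'edei problem is genuinely affine, and moreover the monomials $-1,a,b,c$ do not all have the same degree). The paper instead plugs into the affine intermediate formula~\eqref{eFinalSum3} directly, with $\mathbf{b}=(1,1,1)$ forced by squarefreeness, and then observes two decisive simplifications: for this particular system $\mathcal{D}=\varnothing$ (so $\Jay=0$), and one checks that $\Tw(f,\mathbf{tw})=\Tw(f,\mathbf{s})=1$ for every admissible $\mathbf{s}$, $(\mathbf{u},\boldsymbol{\lambda})$ and every $f\in\RAlt$, and that $\Br(f,S,p)=1$ for all $\lvert S\rvert=2$. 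This collapses the sum over the $8$ elements of $\RAlt$ to a pure factor of $8$, and the Euler factor at $p>2$ becomes $(1-1/p)^{3/4}(1+3/(4p)+3/(4p^2))$ with no case analysis. The condition $\gcd(\Delta(a),\Delta(b),\Delta(c))=1$ is handled not by inclusion--exclusion but by translating it into the admissibility constraints on $\mathbf{s}$ and $(\mathbf{u},\boldsymbol{\lambda})$; the paper then produces the factor $49$ by an explicit table enumerating the admissible $2^{\boldsymbol{\lambda}}\mathbf{u}$ (weighted by $2^{-\sum\lambda_i}$, yielding $25+18+6=49$) and the four admissible sign vectors. Your sketch of ``M\"obius inversion and inclusion--exclusion, altering only finitely many local densities'' would not easily reproduce this clean numerology, and the obstacle you flag as ``principal'' is in fact dissolved by the vanishing of the $\Tw$ and $\Br$ factors, not by a Brauer-group computation.
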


\subsection{The role of the Brauer group in the character sum method}
\label{ssSub}
The proof of Theorem~\ref{thm:main} proceeds via the \emph{character sum method}, an approach to counting problems in arithmetic statistics particularly suited to counting solubility of conics, norm forms and Fano varieties. Although there is no explicit description of this method in the literature nor a direct link between works using the method, it is an approach to arithmetic statistics problems that is well-known to experts and has been applied to number theoretic problems as diverse as counting number fields, studying ranks in quadratic twists of elliptic curves~\cite{HB, HB2} and ranks of class groups~\cite{FK4rank}. These methods have inspired a great amount of recent results~\cite{AK, BC, Chan, FK, FKP, Klys1, Klys2, KMS, KR, MP, Rome, Santens, Smi22b}.

We attempt an informal description of what we refer to as the character sum method now so that we may explain the role the Brauer group plays in counting problems of Loughran--Smeets type and how this fits more generally into the character sum method.

The method is best applied to counting problems concerning tuples of integers with some local condition, which can be represented by characters, imposed at every prime. A prototypical example would be counting how frequently a Hilbert symbol (or collection thereof) takes the value +1. The broad strokes of the method in its modern form are derived from the papers of Heath-Brown~\cite{HB, HB2}, Fouvry--Kl{\"u}ners~\cite{FK4rank} and Friedlander--Iwaniec~\cite{FI}. In the simple example of checking how frequently the symbol $(a, b)_p$ is equal to +1 for all primes $p$ as $a$ and $b$ vary we can characterise the local condition with Legendre symbols
\begin{equation}
\label{eDetect}
\mathbf{1}_{(a, b)_p = +1} = \frac{1}{2} \left(1 + \left(\frac{b}{p}\right)\right).
\end{equation}
if $p\mid a$ and $p\nmid b$, and a similar expression works if $p\nmid a$ and $p\mid b$. Note that the case $p\mid a$ and $p\mid b$ happens with probability $O(p^{-2})$, and is therefore easy to treat separately.

This turns the problem into the evaluation of a complicated sum weighted by a product containing characters like~\eqref{eDetect} over all the prime divisors of the summation variables. This product can be expanded into a sum over divisors. After these manipulations, the summand is now a product of characters multiplied by an arithmetic weight $c^{-\omega(n)}$. This product of characters is usually non-principal and thus exhibits oscillation. In practice, this oscillation is rigorously demonstrated either via the Siegel--Walfisz theorem, if the variables involved are small, or otherwise using the large sieve.

However it is possible that a certain conspiracy between the variables appearing in the various characters arises causing the characters to cancel and thus producing a main term. Finding how these main terms arise is a combinatorial process referred to as linked indices. The set-up in the present paper necessitates a very general version of this process, carried out in Section~\ref{sComb}. Each of the main terms which arise have the same order of magnitude but possibly different leading constants. 

This peculiarity is explained by a deep connection with the Brauer group of the underlying variety. There is an implicit connection between the process just described and the influence the Brauer group has on the existence of rational points. Judging whether a $\Q_p$-point trivialises a quaternion algebra in the Brauer group is precisely equivalent to evaluating a Hilbert symbol. Performing this calculation at all primes allows one to compute the influence of the Brauer--Manin obstruction on the existence of rational points which conjecturally completely controls whether or not a fibre is soluble. This connection between main terms in such counts and Brauer group elements is well-known in the study of rational points (in particular when counting points in conic bundles such as in \cite{chats} or \cite{sumsofsquares}). 

The novelty in our approach is in Lemma~\ref{t:identification} where a direct explicit correspondence is produced between elements of the subordinate Brauer group and combinatorial data describing linked indices. This makes completely explicit the connection between the geometric and analytic sides of the problem and allows our leading constants to be written as a sum over terms indexed by the subordinate Brauer group, as in Corollary~\ref{cMain}. We note that this observation could be viewed as a more deeply hidden and obscure version of the phenomenon that when one counts soluble fibres over a toric variety (as in~\cite{Lou18}). Namely, after Poisson summation one is required to sum the contribution from non-trivial characters. This precisely corresponds to elements of the subordinate Brauer group.

It is our philosophy that all counting problems of this type should be understood in this way (main terms from the combinatorics correspond to Brauer group elements) and that this explicit connection enlightens previous results in the literature (in particular, the pair of main terms with different constants arising in Wilson~\cite{cameron} and Pagano--Sofos~\cite{PS}). While this connection is better understood in the realm of rational points, the philosophy should be applicable in all the domains of arithmetic statistics where the character sum method is used. With the recent advent of Brauer groups in field counting problems \cite{LSBG}, it seems likely that this perspective could have deep consequences throughout arithmetic statistics.

\subsection*{Acknowledgements}
This project was initiated when PK visited the ISTA, and all authors wish to thank the institute for its excellent working conditions. The authors would also like to thank Dan Loughran and Tim Santens for many inspiring conversations about subordinate Brauer groups.

PK gratefully acknowledges the support of Dr.~Max R\"ossler, the Walter Haefner Foundation and the ETH Z\"urich Foundation. PK also acknowledges the support of the Dutch Research Council (NWO) through the Veni grant ``New methods in arithmetic statistics''. NR was supported by FWF project ESP 441-NBL.

\section{The character sum}
\label{sChar}
In this section we shall rewrite our counting problem as a character sum, and then isolate the potential main terms. The arguments in this section are fairly routine. The key innovation in this paper is the treatment of the main terms in Sections~\ref{sComb} and~\ref{sLeading}, which requires more subtle and novel arguments.

\subsection{Changing variables}
Let $n \in \Z_{\geq 2}$, and consider the affine variety $V$ defined as the intersection of 
\begin{equation}
\label{eDefV}
\begin{alignedat}{4}
&M_{1, 1}(t_1, \dots, t_n) x_1^2 \, &&+ \, M_{1, 2}(t_1, \dots, t_n) y_1^2 \, && \, + M_{1, 3}(t_1, \dots, t_n) z_1^2 &&= 0 \\
&M_{2, 1}(t_1, \dots, t_n) x_2^2 \, &&+ \, M_{2, 2}(t_1, \dots, t_n) y_2^2 \, && \, + M_{2, 3}(t_1, \dots, t_n) z_2^2 &&= 0\\
& \quad \quad \quad \quad \ \vdots && \quad \quad \quad \quad \quad \ \ \vdots && \quad \quad \quad \quad \quad \ \ \vdots && \ \ \vdots \\
&M_{m, 1}(t_1, \dots, t_n) x_m^2 \, &&+ \, M_{m, 2}(t_1, \dots, t_n) y_m^2 && \, + M_{m, 3}(t_1, \dots, t_n) z_m^2 &&= 0,
\end{alignedat}
\end{equation}
where the $M_{i, j}$ are squarefree monomials with leading coefficient $\pm 1$. We assume that $M_{i, 1}$, $M_{i, 2}$ and $M_{i, 3}$ are pairwise coprime.

There is a natural morphism $\varphi:V \rightarrow \AAA^n$ such that each non-degenerate fibre is the direct product of $m$ conics. We will count how often this equation is locally soluble as we vary over tuples $(t_1, \dots, t_n)$ with $\gcd(t_1, \dots, t_n) = 1$ and $\max(|t_1|, \dots, |t_n|) \leq B$. Denote this count by $N(B)$. Since each non-degenerate fibre is a direct product of conics, this also gives results on the count of fibres that are globally soluble.

We let $\mathbf{s} = (s_i)_{1 \leq i \leq n}$ with each $s_i \in \{\pm 1\}$ be a collection of sign conditions for the $t_i$. We let $\boldsymbol{\lambda} = (\lambda_i)_{1 \leq i \leq n}$ with each $\lambda_i \in \{0,1\}$ be the parity of the $2$-adic valuation $\nu_2(t_i)$ of $t_i$. We let $\mathbf{tw} = (\tw_i)_{1 \leq i \leq n}$ be a set of congruence conditions modulo $8\cdot 2^{\lambda_i}$ for the squarefree part of $t_i$ (which is by definition $t_i$ divided by its largest square divisor). Let $\mathbf{u}=(u_i)_{1 \leq i \leq n}$ with each $u_i \in \{\pm1,\pm3\}$ so that $\tw_i = 2^{\lambda_i}u_i$. 

\begin{definition}
\label{def:admissible}
We say that the vector $\mathbf{s} \in \{\pm 1\}^n$ is admissible if $\varphi^{-1}( \mathbf{s})$ has an $\R$-point. We say that $(\mathbf u,\boldsymbol \lambda)$ is admissible if $\boldsymbol \lambda \in \{0,1\}^n - \{ \mathbf{1}\}$ and $\mathbf{u} \in \{\pm 1, \pm 3\}^n$ are such that $\varphi^{-1}(2^{\boldsymbol{\lambda}} \mathbf{u})$ has a $\Q_2$-point.
\end{definition}

We uniquely write 
$$
t_i = s_i 2^{\lambda_i} a_i b_i^2
$$ 
with $\lambda_i \in \{0, 1\}$, $a_i > 0$, $b_i > 0$ and $a_i$ squarefree and odd. We consider the subsum
\[
N(B, \mathbf{s}, \mathbf{u}, \boldsymbol{\lambda}) \coloneqq \sum_{\substack{t_1, \dots, t_n \in \Z \\ \gcd(t_1, \dots, t_n) = 1 \\ \max(|t_1|, \dots, |t_n|) \leq B \\\sgn(t_i) = s_i \\ \nu_2(t_i)\equiv \lambda_i\bmod 2\\ a_i \equiv s_iu_i \bmod 8}} \mathbf{1}_{\varphi^{-1}(\mathbf{t}) \text{ E.L.S.}}.
\]
We may assume that $\mathbf{s}$ and $(\mathbf{u}, \boldsymbol{\lambda})$ are admissible. Of course, we get back $N(B)$ by summing over the finitely many possibilities for the vectors $\mathbf{s}$, $\mathbf{u}$ and $\boldsymbol{\lambda}$. Our main counting result is in fact an asymptotic formula for $N(B, \mathbf{s}, \mathbf{u}, \boldsymbol{\lambda})$. As this requires a significant amount of notation to introduce that we will slowly develop over the course of the proof, we have opted to defer stating this result until Theorem~\ref{tTechnicalHeart}.

Denote $[n] \coloneqq \{1, \dots, n\}$. For every subset $\varnothing \subset S \subset [n]$, we introduce a new variable
\[
v_S \coloneqq \prod_{\substack{p \text{ prime} \\ p \mid a_i \Longleftrightarrow i \in S}} p.
\]
The new variables $v_S$ are positive, odd, squarefree and pairwise coprime. With this construction, we have
\[
a_i = \prod_{\substack{\varnothing \subset S \subset [n] \\ i \in S}} v_S.
\]
In particular, if $|S| \geq 2$ and $p \mid v_S$, then $p$ divides at least two of the $a_i$. 

\begin{lemma}
\label{lGCD}
The contribution to $N(B, \mathbf{s}, \mathbf{u}, \boldsymbol{\lambda})$ coming from $v_S > (\log B)^A$ for some $S$ with $|S| \geq 2$ is bounded by $O(B^n(\log B)^{-A})$, where the implied constant depends at most on $n$.
\end{lemma}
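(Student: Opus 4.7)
The plan is to reduce the claim to an elementary divisor-sum estimate using the fact that when $\lvert S\rvert \ge 2$ the variable $v_S$ is a common divisor of at least two of the $a_i$'s. Concretely, if $v_S > (\log B)^A$ for some $S$ with $\lvert S\rvert \ge 2$, fix any two distinct indices $i,j \in S$. By construction $v_S$ divides both $a_i$ and $a_j$, and via the parametrisation $t_k = s_k 2^{\lambda_k} a_k b_k^2$ this forces $v_S \mid t_i$ and $v_S \mid t_j$. Therefore the set of tuples $(t_1,\dots,t_n)$ contributing to the restricted subsum is contained in
\[
\bigcup_{\{i,j\} \subseteq [n],\, i\neq j}\; \bigcup_{\substack{d > (\log B)^A \\ d\text{ odd squarefree}}} \bigl\{\mathbf{t} \in \Z^n : 0 < \lvert t_k\rvert \leq B,\; d \mid t_i,\; d \mid t_j\bigr\}.
\]

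Next I would bound the indicator $\mathbf{1}_{\varphi^{-1}(\mathbf{t}) \text{ E.L.S.}}$ trivially by $1$ and estimate the cardinality of each set by counting lattice points coordinate by coordinate. For $d \leq B$ and fixed $i \neq j$, the count is at most
\[
(2\lfloor B/d \rfloor + 1)^2 (2B+1)^{n-2} \;\ll_n\; \frac{B^n}{d^2}.
\]
If instead $d > B$, the conditions $d \mid t_i$ and $\lvert t_i\rvert \leq B$ force $t_i = 0$, which is excluded since $a_i, b_i > 0$; hence the count is $0$. Summing over $d$ in the dyadic range $(\log B)^A < d \leq B$ gives
\[
\sum_{d > (\log B)^A} \frac{B^n}{d^2} \;\ll\; \frac{B^n}{(\log B)^A},
\]
and taking a union bound over the $\binom{n}{2}$ pairs $\{i,j\}$ only changes the implied constant (which is allowed to depend on $n$).

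The only moving parts here are the reduction step and the $d^{-2}$ saving, and both are forced on us: $\lvert S\rvert \ge 2$ automatically supplies a pair of indices with a common divisor $v_S$, and that pair of divisibility conditions produces the quadratic-in-$d$ saving, which is more than enough to offset the tail sum starting at $(\log B)^A$. Consequently, there is no real obstacle in the argument; the lemma is essentially a sanity check that ensures we may restrict each $v_S$ with $\lvert S\rvert \ge 2$ to lie below $(\log B)^A$ in all later manipulations, at the cost of an admissible error.
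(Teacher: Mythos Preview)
Your proof is correct and follows essentially the same approach as the paper: exploit that $v_S$ divides two of the $t_i$'s to gain a $1/v_S^2$ saving, then sum over $v_S>(\log B)^A$. The paper's version is slightly terser (it writes $t_1'=t_1/v_S$, $t_2'=t_2/v_S$ and sums directly), but the content is identical.
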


\begin{proof}
By relabelling we can assume that $v_S$ divides $a_1$ and $a_2$, so write $t_1' = t_1/v_S$ and $t_2' = t_2/v_S$. Observe that
\[
\sum_{v_S>(\log B)^A} \sum_{|t_1'| \leq B/v_S}\sum_{|t_2'|\leq B/v_S} 1\ll B^2(\log B)^{-A}.
\]
Summing this over $t_3, t_4, \dots$, and then over all possible $S$ with $|S| \geq 2$, we obtain the claim in the lemma.
\end{proof}

Letting $N_1(B, \mathbf{s}, \mathbf{u}, \boldsymbol{\lambda})$ be the subsum of $N(B, \mathbf{s}, \mathbf{u}, \boldsymbol{\lambda})$ with $v_S \leq (\log B)^{100}$ for $|S| \geq 2$, we have by Lemma~\ref{lGCD} the asymptotic formula
$$
N(B, \mathbf{s}, \mathbf{u}, \boldsymbol{\lambda}) = N_1(B, \mathbf{s}, \mathbf{u}, \boldsymbol{\lambda}) + O\left(\frac{B^n}{(\log B)^{100}}\right),
$$
where the implied constant depends only on $n$. Since $n$ is fixed for us, we shall not track the dependency on $n$ in all of our implied constants.

From now on we fix $\mathbf{b}$, introducing a new sum
\begin{equation}
\label{eq:N1def}
N_1(B, \mathbf{s}, \mathbf{u}, \boldsymbol{\lambda}, \mathbf{b}) \coloneqq \sum_{\substack{v_S > 0 \\ v_i \leq \frac{B}{2^{\lambda_i} \cdot b_i^2 \cdot \prod_{\{i\} \subset S} v_S} \\ \gcd(\{v_S\} \cup \{b_i : i \notin S\}) = 1 \\ \sgn(t_i) = s_i \\ \nu_2(t_i)\equiv \lambda_i\bmod 2\\ a_i \equiv s_iu_i \bmod 8\\ v_S \leq (\log B)^{100} \text{ for } |S| \geq 2}} \mu^2\left(2 \cdot \prod_S v_S\right) \times \mathbf{1}_{\varphi^{-1}(\mathbf{t}) \text{ E.L.S.}}
\end{equation}
leading to a decomposition of the type
\[
N_1(B, \mathbf{s}, \mathbf{u}, \boldsymbol{\lambda}) = \sum_{\mathbf{b}} N_1(B, \mathbf{s}, \mathbf{u}, \boldsymbol{\lambda}, \mathbf{b}).
\]

\begin{lemma}
The contribution to $N_1(B, \mathbf{s}, \mathbf{u}, \boldsymbol{\lambda})$ coming from vectors $\mathbf{b}$ with $b_i > (\log B)^A$ for some $i$ is bounded by $O(B^n(\log B)^{-A})$, where the implied constant depends only on $n$.
\end{lemma}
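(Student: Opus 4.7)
The plan is to reduce to a simple union bound over the $n$ choices of the distinguished index, and then exploit the trivial bound together with the constraint $|t_i| \le B$ to extract the saving from the single factor $b_i^{-2}$ with $b_i > (\log B)^A$. That is, writing $i_0$ for an index with $b_{i_0} > (\log B)^A$, I would estimate
\[
\sum_{\substack{\mathbf{b} \\ \exists i: b_i > (\log B)^A}} N_1(B, \mathbf{s}, \mathbf{u}, \boldsymbol{\lambda}, \mathbf{b})
\;\le\;
\sum_{i_0 = 1}^n \ \sum_{b_{i_0} > (\log B)^A} \ \sum_{(b_j)_{j \neq i_0}} N_1(B, \mathbf{s}, \mathbf{u}, \boldsymbol{\lambda}, \mathbf{b})
\]
and bound each summand by majorising $\mathbf{1}_{\varphi^{-1}(\mathbf t) \text{ E.L.S.}} \le 1$ and dropping the coprimality, squarefreeness, congruence and sign conditions.

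Next, I would translate the resulting sum over the tuples $(v_S)$ into a sum over the $a_i$, using the bijection $a_i = \prod_{S \ni i} v_S$ coming from the fact that each $a_i$ is squarefree. Under this bijection, the size constraint $v_{\{i\}} \le B/(2^{\lambda_i} b_i^2 \prod_{\{i\} \subsetneq S} v_S)$ collapses to $a_i \le B/(2^{\lambda_i} b_i^2)$, i.e.\ to the original constraint $|t_i| \le B$. Hence for each fixed $\mathbf b$ the inner count is at most
\[
\prod_{j=1}^n \#\bigl\{a_j \ge 1 : a_j \le B/(2^{\lambda_j} b_j^2)\bigr\}
\;\ll\;
\prod_{j=1}^n \frac{B}{b_j^2}.
\]

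Finally, I would sum this over $\mathbf b$, separating the distinguished coordinate $i_0$ from the rest. For $j = i_0$ the sum $\sum_{b_{i_0} > (\log B)^A} B/b_{i_0}^2 \ll B/(\log B)^A$ supplies the only non-trivial input, while for each $j \neq i_0$ the tail $\sum_{b_j \ge 1} B/b_j^2 \ll B$ converges trivially. Multiplying these estimates and summing over the $n$ choices of $i_0$ yields the claimed bound $O(B^n/(\log B)^A)$, with an implied constant depending only on $n$.

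There is essentially no obstacle here: the only point to keep in mind is the bijection between admissible $(v_S)$ tuples and tuples $(a_i)$ of squarefree integers with disjoint supports, which turns the somewhat opaque constraints in \eqref{eq:N1def} into the clean inequalities $a_i \le B/(2^{\lambda_i} b_i^2)$ used above.
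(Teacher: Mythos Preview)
Your proof is correct and takes essentially the same approach as the paper. The paper's version is slightly more direct: it writes $t_{i_0}' = t_{i_0}/b_{i_0}^2$ and bounds $\sum_{b_{i_0} > (\log B)^A} \#\{|t_{i_0}'| \le B/b_{i_0}^2\} \ll B(\log B)^{-A}$ before summing trivially over the remaining $t_j$, which is exactly your argument phrased in the original $t$-variables rather than via the $(v_S) \leftrightarrow (a_i)$ correspondence.
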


\begin{proof}
By relabelling, assume that $b_1>(\log B)^{A}$. Since $b_1^2\mid t_1$, write $t_1'=t_1/b_1^2$. Then the number of possible $t_1$ is bounded by 
\[
\sum_{b_1 > (\log B)^{A}} \sum_{|t_1'|\leq B/b_1^2} 1\ll B(\log B)^{-A}.
\]
Summing over the rest of the variables $t_2, \dots, t_n$ gives the bound.
\end{proof}

Henceforth we shall treat $\mathbf{b} = (b_i)_{1 \leq i \leq n}$ as a fixed number outside the main sum. We may and will assume that all the $b_i$ are small, say $b_i \leq (\log B)^{100}$.

\subsection{Expansion of the indicator function}
\label{sec:indicator}
Instead of thinking about monomials, we will think of them as subsets of $[n] \cup \{-\}$ in the natural way, where $[n] \coloneqq \{1, \dots, n\}$. Taking $t_{\{-\}} =s_{\{-\}}= -1$ by convention, this gives subsets $S_{i, j} \subseteq [n] \cup \{-\}$ for each $1 \leq i \leq m$ and $1 \leq j \leq 3$, corresponding to the monomial $M_{i, j}(t_1, \dots, t_n)$. Then we have in terms of the variables $v_S$
\[
M_{i, j}(t_1, \dots, t_n) \in \prod_{k \in S_{i, j}} \left(\sgn(t_k) \prod_{k\in S \subset [n]} v_S\right) \cdot (\Q^{\times})^2
= \prod_{k\in S_{i, j}} s_k \prod_{|S \cap S_{i, j}| \text{ odd}} v_S \cdot (\Q^{\times})^2.
\]
For each $\varnothing \subset S \subset [n]$, we specify a subspace $V_S$ on the power set of $[n] \cup \{-\}$ by taking the space generated by $\{g_{i, S} : 1 \leq i \leq m\}$, where $g_{i, S}$ is selected according to the table below. We denote the symmetric difference of sets by $+$.

\begin{table}[!ht]
\begin{center}
\begin{tabular}{|c||c|}
\hline
$(|S \cap S_{i, 1}|, |S \cap S_{i, 2}|, |S \cap S_{i, 3}|) \bmod 2$ & $g_{i, S}$ \\ \hline \hline
$(0, 0, 0)$,\quad $(1, 1, 1)$ & $\varnothing$ \\ \hline
$(1, 0, 0)$,\quad $(0, 1, 1)$ & $\{-\} + S_{i, 2}+ S_{i, 3}$ \\ \hline
$(0, 1, 0)$,\quad $(1, 0, 1)$ & $\{-\}+ S_{i, 1}+ S_{i, 3}$ \\ \hline
$(0, 0, 1)$,\quad $(1, 1, 0)$ & $\{-\}+ S_{i, 1} + S_{i, 2}$ \\ \hline
\end{tabular}
\end{center}
\end{table}

\vspace*{-\baselineskip}

If $\{-\}$ is in $V_S$, we define $W_S$ to be the unique splitting $V_S = \langle \{-\} \rangle \oplus W_S$, where all the sets in $W_S$ do not contain $\{-\}$. Otherwise, if $\{-\}$ is not in $V_S$, we define $W_S \coloneqq V_S$. We write $\mathcal{D}$ for the collection of $S$ for which $\{-\} \in V_S$. 

We now detect the everywhere locally soluble condition. The equations~\eqref{eDefV} are locally soluble at $2$ and $\infty$ by construction of $\mathbf{s}$, $ \mathbf{u}$ and $\boldsymbol{\lambda}$. Moreover, if $p$ is an odd finite prime not dividing $t_1 \cdots t_n$, then the equations~\eqref{eDefV} are also locally soluble. Indeed, this is a well-known fact for each individual conic, and since each fibre is a product of conics, the claim holds. The $g_{i,S}$ have been chosen precisely because the Legendre symbol $(\prod_{j \in g_{i,S}} t_j / p)$ encodes the solubility of the $i$-th conic at a prime $p$ dividing $v_S$. Therefore given admissible $\mathbf{s}$ and $(\mathbf{u},\boldsymbol{\lambda})$, we can substitute into~\eqref{eq:N1def} the expression
\[
\mathbf{1}_{\varphi^{-1}(\mathbf{t}) \text{ E.L.S.}}
=
\prod_S \prod_{p \mid v_S} \prod_{i = 1}^m \left(\frac{1}{2} \left(1 + \left(\frac{\prod_{j \in g_{i, S}} t_j}{p}\right)\right)\right),
\]
where $t_{\{-\}} = -1$. Note that in the product over $i$, some of the detector functions may be redundant. Indeed, we only need to check this over the vector space $W_S$, which we codify in our next lemma.

\begin{lemma}
\label{lExpand}
Let $\varnothing \subset S \subset [n]$. Then we have
\[
\frac{1}{2^m} \prod_{i = 1}^m \left(1 + \left(\frac{\prod_{j \in g_{i, S}} t_j}{p}\right)\right) = \frac{\mathbf{1}_{S \in \mathcal{D} \Rightarrow p \equiv 1 \bmod 4}}{|W_S|} \sum_{T \in W_S} \left(\frac{\prod_{j \in T} t_j}{p}\right).
\]
\end{lemma}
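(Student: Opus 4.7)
The plan is to expand the product on the left directly and identify it with a sum of characters indexed by the subspace $V_S$, then split off the contribution of $\{-\}$ in the case $S \in \DD$.

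First I would introduce the shorthand $\chi_g \coloneqq \left(\frac{\prod_{j \in g} t_j}{p}\right)$ for any $g \subseteq [n] \cup \{-\}$, with the convention $t_{\{-\}} = -1$. The key observation is that $\chi$ is (essentially) multiplicative: $\chi_g \chi_h = \chi_{g + h}$, where $+$ denotes symmetric difference, because squared factors in the Legendre symbol disappear. Expanding out the left-hand side gives
\[
\prod_{i=1}^{m} \bigl(1 + \chi_{g_{i, S}}\bigr) = \sum_{\ve \in \{0,1\}^m} \chi_{\sum_{i} \ve_i g_{i, S}},
\]
and since by definition $V_S$ is the $\FF_2$-span of $\{g_{1, S}, \dots, g_{m, S}\}$, the map $\ve \mapsto \sum_i \ve_i g_{i, S}$ is a surjective linear map $\FF_2^m \twoheadrightarrow V_S$ with all fibres of cardinality $2^m / |V_S|$. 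Consequently
\[
\frac{1}{2^m} \prod_{i=1}^{m} \bigl(1 + \chi_{g_{i, S}}\bigr) = \frac{1}{|V_S|} \sum_{v \in V_S} \chi_v.
\]

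Next I would split into the two cases of the definition of $W_S$. If $S \notin \DD$, then $V_S = W_S$ and $\mathbf{1}_{S \in \DD \Rightarrow p \equiv 1 \bmod 4} = 1$ vacuously, so the identity is immediate. If instead $S \in \DD$, the splitting $V_S = \langle \{-\}\rangle \oplus W_S$ allows me to factor
\[
\sum_{v \in V_S} \chi_v = \bigl(1 + \chi_{\{-\}}\bigr) \sum_{T \in W_S} \chi_T = \bigl(1 + \leg{-1}{p}\bigr) \sum_{T \in W_S} \chi_T.
\]
The prefactor equals $2$ if $p \equiv 1 \bmod 4$ and $0$ if $p \equiv 3 \bmod 4$; combining with $|V_S| = 2|W_S|$ recovers the claimed formula.

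I expect no serious obstacle: the result is essentially the observation that expanding the detector product produces a uniform average over the linear span $V_S$, plus the algebraic fact that averaging over the coset generated by $\{-\}$ picks up a factor $\leg{-1}{p}$. The only mildly delicate point is keeping track of the multiplicativity $\chi_g \chi_h = \chi_{g+h}$, but this is harmless since we are working in Legendre symbols and squares trivialise.
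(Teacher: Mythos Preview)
Your proof is correct and follows essentially the same approach as the paper. The only difference is in how you reach the intermediate identity $\frac{1}{2^m}\prod_i(1+\chi_{g_{i,S}}) = \frac{1}{|V_S|}\sum_{v\in V_S}\chi_v$: the paper first passes to a basis $\mathcal{H}$ of $V_S$ and argues that both products are indicator functions for the same condition (since $\{g_{i,S}\}$ and $\mathcal{H}$ are spanning sets), whereas you expand the product directly and count fibres of the linear surjection $\FF_2^m \twoheadrightarrow V_S$. Your route is slightly more streamlined, and from that point on the two arguments coincide verbatim in splitting off the $\langle\{-\}\rangle$ factor when $S\in\mathcal{D}$.
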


\begin{proof}
Pick a basis $\{h_1, \dots, h_k\}$ for $W_S$ over $\FF_2$, and extend this to a basis $\mathcal{H}$ for $V_S$ by adjoining the element $\{-\}$ if $\{-\} \in V_S$. Since both $\{g_{i, S} : i \in [m]\}$ and $\mathcal{H}$ are spanning sets of $V_S$, we conclude that 
$$
\forall i \in [m]: \left(\frac{\prod_{j \in g_{i, S}} t_j}{p}\right) = 1 \quad \quad \Longleftrightarrow \quad \quad \forall h \in \mathcal{H} : \left(\frac{\prod_{j \in h} t_j}{p}\right) = 1.
$$
Hence we can rewrite the expression on the left as
\begin{multline}
\label{eq:splitsum}
\frac{1}{2^m} \prod_{i = 1}^m \left(1 + \left(\frac{\prod_{j \in g_{i, S}} t_j}{p}\right)\right)
=
\frac{1}{|V_S|} \prod_{h\in \mathcal{H}} \left(1 + \left(\frac{\prod_{j \in h} t_j}{p}\right)\right)\\
=\left(\frac{1}{2} \left(1 + \left(\frac{\prod_{j \in \{-\}} t_j}{p}\right)\right)\right)^{\mathbf{1}_{\{-\}\in V_S}}\times\frac{1}{|W_S|} \prod_{i=1}^k \left(1 + \left(\frac{\prod_{j \in h_i} t_j}{p}\right)\right).
\end{multline}
If $\{-\}$ is in $V_S$, the first term becomes
\begin{equation}
\label{eq:minus}
\frac{1}{2} \left(1 + \left(\frac{\prod_{j \in \{-\}} t_j}{p}\right)\right)=
\frac{1}{2} \left(1 + \left(\frac{-1}{p}\right)\right)
= \mathbf{1}_{p \equiv 1 \bmod 4}.
\end{equation}
Expanding the product over $1 \leq i \leq k$, we get
\begin{equation}
\label{eq:wscont}
\prod_{i = 1}^k \left(1 + \left(\frac{\prod_{j \in h_i} t_j}{p}\right)\right) =
\sum_{(\delta_1, \dots, \delta_k) \in \FF_2^k} \left(\frac{\prod_{j \in \sum_{i=1}^k h_i^{\delta_i}} t_j}{p}\right)=\sum_{T \in W_S} \left(\frac{\prod_{j \in T} t_j}{p}\right),
\end{equation}
since the symmetric difference of sets $\sum_{i=1}^k h_i^{\delta_i}$ runs precisely over the elements of $W_S$ as $(\delta_1, \dots, \delta_k)$ runs over $\FF_2^k$. Putting~\eqref{eq:minus} and~\eqref{eq:wscont} into~\eqref{eq:splitsum} completes the proof.
\end{proof}

Lemma~\ref{lExpand} gives the equality
\[ 
\prod_{p \mid v_S} \prod_{i = 1}^m \left(\frac{1}{2} \left(1 + \left(\frac{\prod_{j \in g_{i, S}} t_j}{p}\right)\right)\right) = \prod_{p \mid v_S}\frac{\mathbf{1}_{S \in \mathcal{D} \Rightarrow p \equiv 1 \bmod 4}}{|W_S|} \sum_{T \in W_S} \left(\frac{\prod_{j \in T} t_j}{p}\right),
\]
which we will continue to expand. Note that
\[
\prod_{p \mid v_S}\sum_{T \in W_S} \left(\frac{\prod_{j \in T} t_j}{p}\right) = \sum_{(w_{S, T})_T} \prod_{T \in W_S} \left(\frac{\prod_{j \in T} t_j}{w_{S, T}}\right),
\]
where the sum over $(w_{S, T})_T$ ranges over all factorisations of the form $v_S = \prod_{T \in W_S} w_{S, T}$. We define $\mathcal{I}$ to be the set of pairs $(S, T)$, where $\varnothing \subset S \subset [n]$ and $T \in W_S$. By a further interchange of the sum and product
\[
\prod_S \prod_{p \mid v_S} \prod_{i = 1}^m \left(\frac{1}{2} \left(1 + \left(\frac{\prod_{j \in g_{i, S}} t_j}{p}\right)\right)\right) = \sum_{(w_{S, T})_{(S, T) \in \mathcal{I}}}\prod_S\prod_{T \in W_S} |W_S|^{-\omega(w_{S, T})} \left(\frac{\prod_{j \in T} t_j}{w_{S, T}}\right),
\]
where the sum over $(w_{S, T})_{(S, T) \in \mathcal{I}}$ ranges over all positive integers $w_{S, T}$ with $v_S = \prod_{T \in W_S} w_{S, T}$ for every $S$, and moreover $S \in \mathcal{D}$ and $p \mid w_{S, T}$ implies $p \equiv 1 \bmod 4$. Therefore, putting this back into the expression~\eqref{eq:N1def} for $N_1(B, \mathbf{s}, \mathbf{u}, \boldsymbol{\lambda}, \mathbf{b})$, we get
\begin{align}
\label{eq:N1Flat}
N_1(B, \mathbf{s}, \mathbf{u}, \boldsymbol{\lambda}, \mathbf{b}) = \sum_{(w_{S, T})}^\flat \mu^2\left(2 \cdot \prod_{S, T} w_{S, T}\right) \times \prod_{S, T} |W_S|^{-\omega(w_{S, T})} \times \prod_{S, T} \left(\frac{\prod_{j \in T} t_j}{w_{S, T}}\right),
\end{align}
where $\flat$ denotes the new summation conditions in the variables $w_{S, T}$ with $(S, T) \in \mathcal{I}$, namely
\begin{align*}
&w_{S,T}\text{ odd}, \hspace{2em} w_{S,T}\geq 1, &&S \in \mathcal{D} \text{ and } p \mid w_{S, T} \Rightarrow p \equiv 1 \bmod 4, \\ 
&\prod_{T \in W_{\{i\}}} w_{\{i\}, T} \leq \frac{B}{2^{\lambda_i} \cdot b_i^2 \cdot \prod_{S\supset \{i\}} \prod_{T \in W_S} w_{S, T}}, &&\gcd(\{w_{S, T}\} \cup \{b_i : i \notin S\}) = 1, \\ 
&\prod_{\substack{\varnothing \subset S \subset [n] \\ i \in S}} \prod_{T \in W_S} w_{S, T} \equiv s_iu_i \bmod 8, &&\prod_{T \in W_S} w_{S, T} \leq (\log B)^{100} \text{ for } |S| \geq 2.
\end{align*}
We now make the substitution
\begin{equation}
\label{eq:tsub}
t_j = s_j 2^{\lambda_j} b_j^2 \prod_{\substack{\varnothing \subset S \subset [n] \\ j \in S}} v_S = s_j 2^{\lambda_j} b_j^2 \prod_{\substack{\varnothing \subset S \subset [n] \\ j \in S}} \prod_{T \in W_S} w_{S, T}.
\end{equation}
Inserting~\eqref{eq:tsub} into equation~\eqref{eq:N1Flat}, we obtain
\[
N_1(B, \mathbf{s}, \mathbf{u}, \boldsymbol{\lambda}, \mathbf{b}) = \sum_{(w_{S, T})}^\flat F((w_{S, T})_{S, T}),
\]
where $F((w_{S, T})_{S, T})$ equals
\begin{equation}
\label{eDefF}
\frac{\mu^2\left(2 \cdot \prod_{S, T} w_{S, T}\right)}{\prod_{(S, T) \in \mathcal{I}} |W_S|^{\omega(w_{S, T})}} \times \prod_{(S, T) \in \mathcal{I}} \left(\frac{\prod_{j \in T} s_j2^{\lambda_j}}{w_{S, T}}\right) \times \prod_{(S_1, T_1) \in \mathcal{I}} \prod_{(S_2, T_2) \in \mathcal{I}} \left(\frac{w_{S_2, T_2}}{w_{S_1, T_1}}\right)^{|S_2 \cap T_1|}
\end{equation}
by definition, and where $s_{\{-\}} \coloneqq -1$ and $\lambda_{\{-\}} \coloneqq 0$ by convention.

\subsection{Linked indices and oscillation}
Our next goal is to study the Legendre symbols present in equation~\eqref{eDefF}. Of particular importance are Legendre symbols such that its flipped version also appears in equation~\eqref{eDefF}. This behavior is captured in our next definition.

\begin{definition}
\label{def:linked}
We say that $(S_1, T_1) \in \mathcal{I}$ and $(S_2, T_2) \in \mathcal{I}$ are linked if
\begin{equation}
\label{eLinked}
|S_2 \cap T_1| + |S_1 \cap T_2| \equiv 1 \bmod 2,
\end{equation}
and $|S_i| = 1$ holds for at least one of $i \in \{1,2\}$.
Otherwise, we say that they are unlinked. 
\end{definition}

\subsubsection{The large sieve}
Here we treat the case when $(\{i\}, T_1)$ and $(\{j\}, T_2)$ are linked, and $w_{\{i\}, T_1}$ and $w_{\{j\}, T_2}$ are not too small. Below is a result taken from~\cite[Lemma~15]{FK4rank}, which we will use.

\begin{lemma}[Large sieve]
\label{lemma:largesieve}
Let $\epsilon > 0$. Let $a_m$ and $b_n$ be complex numbers such that $|a_m|, |b_n| \leq 1$. Then for every $M, N \geq 1$, we have
\[
\sum_{m \leq M} \sum_{n \leq N} a_m b_n \mu^2(2m) \mu^2(2n) \leg{n}{m} \ll_{\epsilon} MN (M^{-\frac{1}{2} + \epsilon} + N^{-\frac{1}{2} + \epsilon}).
\]
\end{lemma}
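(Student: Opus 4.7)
The plan is to derive this from Heath-Brown's quadratic large sieve inequality by a straightforward Cauchy-Schwarz argument — this is the standard bilinear bound for Jacobi symbols. The key input is Heath-Brown's theorem~\cite{HB}, which asserts that for any complex coefficients $(c_n)$ and any $\epsilon > 0$,
\[
\sum_{\substack{m \leq M \\ m \text{ odd squarefree}}} \left|\sum_{\substack{n \leq N \\ n \text{ odd squarefree}}} c_n \leg{n}{m}\right|^2 \ll_\epsilon (MN)^\epsilon (M+N) \sum_n |c_n|^2.
\]

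First I would apply Cauchy-Schwarz to the outer sum in $m$ to obtain
\[
\left|\sum_{m,n} a_m b_n \mu^2(2m)\mu^2(2n) \leg{n}{m}\right|^2 \leq \Bigl(\sum_m |a_m|^2 \mu^2(2m)\Bigr)\sum_m \mu^2(2m) \Bigl|\sum_n b_n \mu^2(2n) \leg{n}{m}\Bigr|^2.
\]
The first factor is at most $M$ by the hypothesis $|a_m| \leq 1$, and the second factor is bounded by $\ll_\epsilon (MN)^\epsilon (M+N) N$ via Heath-Brown's inequality applied to $c_n = b_n \mu^2(2n)$ (which vanishes outside odd squarefree $n$). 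Taking square roots yields the bound $(MN)^{1/2+\epsilon/2}(M+N)^{1/2}$, which when $M \geq N$ is of order $MN^{1/2+\epsilon}$, matching the second summand in the desired estimate. The complementary regime $N \geq M$ is treated identically by swapping the roles of $m$ and $n$ (equivalently, applying Cauchy-Schwarz in the $n$ direction), yielding the first summand $M^{1/2+\epsilon}N$. Combining the two regimes produces the claimed bound $MN(M^{-1/2+\epsilon} + N^{-1/2+\epsilon})$.

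I do not anticipate any real obstacle: the lemma is already known as~\cite[Lemma~15]{FK4rank}, and the proof reduces to nothing more than Cauchy-Schwarz together with Heath-Brown's large sieve. The only mild bookkeeping is to verify that the $\mu^2(2m)\mu^2(2n)$ factors correctly restrict both variables to odd squarefree integers, so that the Jacobi symbol $\leg{n}{m}$ is well-defined and the hypothesis of the quadratic large sieve is met. One could if desired replicate a proof of Heath-Brown's mean-square bound — which itself proceeds via a clever reciprocity-and-Poisson argument — but since the statement is an off-the-shelf consequence, a direct citation to~\cite{FK4rank} is the most economical route.
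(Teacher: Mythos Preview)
Your proposal is correct and coincides with the paper's treatment: the paper does not supply a proof but simply cites \cite[Lemma~15]{FK4rank}, exactly as you recommend. The Cauchy--Schwarz plus Heath-Brown sketch you add is the standard derivation (with the harmless caveat that it literally yields a bound of shape $(MN)^{1/2+\epsilon}(M+N)^{1/2}$, i.e.\ an $(MN)^\epsilon$ loss rather than the $\epsilon$ placed on the smaller variable alone as in the stated inequality; this cosmetic discrepancy is irrelevant for every application in the paper).
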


\begin{lemma}
\label{lemma:LS}
Let $n$ and $\epsilon > 0$ be given. Then there exists $C > 0$ such that for all real numbers $A > 0$ and all linked indices $(\{i\}, T_1)$ and $(\{j\}, T_2)$
\[
\sum_{(w_{S, T})_{(S, T) \neq (\{i\}, T_1), (\{j\}, T_2)}}
\left|\sum_{w_{\{i\}, T_1},w_{\{j\}, T_2}>(\log B)^A}^\flat F((w_{S, T})_{S, T})\right|
\leq \frac{CB^n}{(\log B)^{(\frac{1}{2} - \epsilon) A - 100 \cdot 4^n}}.
\]
\end{lemma}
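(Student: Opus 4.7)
The plan is to apply the large sieve (Lemma~\ref{lemma:largesieve}) to the inner double sum in the variables $w_{\{i\}, T_1}$ and $w_{\{j\}, T_2}$. Inspecting the Jacobi symbol product in~\eqref{eDefF}, the only factors that couple these two variables arise from the pairs of outer and inner indices $(S_1, S_2) = (\{i\}, \{j\})$ and $(\{j\}, \{i\})$, producing the joint factor
\[
\left(\frac{w_{\{j\}, T_2}}{w_{\{i\}, T_1}}\right)^{|\{j\} \cap T_1|} \cdot \left(\frac{w_{\{i\}, T_1}}{w_{\{j\}, T_2}}\right)^{|\{i\} \cap T_2|}.
\]
By Definition~\ref{def:linked} the two exponents have opposite parities, so after possibly swapping the roles of $(\{i\}, T_1)$ and $(\{j\}, T_2)$ this reduces to a single unflipped symbol $\left(\frac{w_{\{j\}, T_2}}{w_{\{i\}, T_1}}\right)$. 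In the case $i = j$, the squarefreeness of $v_{\{i\}} = \prod_T w_{\{i\}, T}$ still forces $w_{\{i\}, T_1}$ and $w_{\{j\}, T_2}$ to be coprime, as required by Lemma~\ref{lemma:largesieve}.

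Next, I fix every outer variable and examine the residual dependence of $F$ on $w_{\{i\}, T_1}$. Beyond the bounded factor $|W_{\{i\}}|^{-\omega(w_{\{i\}, T_1})}$ and the $\mu^2$ squarefreeness-and-coprimality conditions, this dependence consists of Jacobi symbols of the form $\left(\frac{\star}{w_{\{i\}, T_1}}\right)$ and $\left(\frac{w_{\{i\}, T_1}}{\star}\right)$, whose $\star$-entries are products of outer $w_{S, T}$'s and fixed quantities $s_k 2^{\lambda_k}$. Symbols of the first type are Dirichlet characters in $w_{\{i\}, T_1}$ of a fixed modulus; symbols of the second type transform via quadratic reciprocity into a character of fixed modulus times a reciprocity sign that depends only on $w_{\{i\}, T_1} \bmod 8$, which is itself determined by the admissibility conditions on $(\mathbf{u}, \boldsymbol{\lambda})$. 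Packaging these into a coefficient $a_{w_{\{i\}, T_1}}$ with $|a_m| \leq 1$, and constructing $b_{w_{\{j\}, T_2}}$ analogously, the inner sum takes exactly the bilinear form of Lemma~\ref{lemma:largesieve}. Writing $M, N$ for the upper bounds on $w_{\{i\}, T_1}, w_{\{j\}, T_2}$, the lower bound $w_{\{i\}, T_1}, w_{\{j\}, T_2} > (\log B)^A$ then gives an inner bound of size $\ll_\epsilon MN (\log B)^{-A(1/2 - \epsilon)}$.

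Finally, summing over outer variables: the large outer variables $w_{\{k\}, T}$ with $k \neq i,j$ together with $M \cdot N$ contribute at most $B^n$, because the condition $|t_k| \leq B$ together with the substitution~\eqref{eq:tsub} forces $\prod_{S \ni k} \prod_T w_{S, T} \ll B$ for each $k$; the small variables $w_{S, T}$ with $|S| \geq 2$ are each bounded by $(\log B)^{100}$, and since $|\mathcal{I}| \leq 4^n$ their total contribution is at most $(\log B)^{100 \cdot 4^n}$. Combining these with the large sieve estimate yields the claimed bound. The main technical obstacle is the bookkeeping: one must verify that every Jacobi symbol in~\eqref{eDefF} involving $w_{\{i\}, T_1}$ or $w_{\{j\}, T_2}$ is absorbable into a bounded coefficient independent of the other target variable, and this independence relies crucially on the fact that the relevant congruence classes modulo $8$ are fixed in advance by the admissibility data.
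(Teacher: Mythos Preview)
Your proof is correct and follows essentially the same approach as the paper's: freeze the outer variables, apply the large sieve (Lemma~\ref{lemma:largesieve}) to the bilinear form in $w_{\{i\},T_1}, w_{\{j\},T_2}$, and then sum trivially over the remaining variables using $|\mathcal{I}|\le 4^n$ and the bound $w_{S,T}\le(\log B)^{100}$ for $|S|\ge 2$. Two small remarks: first, the detour through quadratic reciprocity is unnecessary, since a symbol $\bigl(\tfrac{w_{\{i\},T_1}}{\star}\bigr)$ with $\star$ frozen is already a bounded function of $w_{\{i\},T_1}$ alone and may be absorbed directly into $a_m$ without converting it; second, the case $i=j$ cannot actually occur for linked indices, because $i\notin T$ for every $T\in W_{\{i\}}$ (the generators $g_{k,\{i\}}$ never contain $i$ by coprimality of the $M_{k,j}$), so $|\{i\}\cap T_1|+|\{i\}\cap T_2|=0$.
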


\begin{proof}
Freezing the values of $(w_{S, T})_{(S, T) \neq (\{i\}, T_1), (\{j\}, T_2)}$, we apply Lemma~\ref{lemma:largesieve} to
\[
\sum_{w_{\{i\}, T_1}} \sum_{w_{\{j\}, T_2}} F((w_{S, T})_{S, T}) \ll_\epsilon \frac{B}{\prod_{T_1' \neq T_1} w_{\{i\}, T_1'}} \cdot \frac{B}{\prod_{T_2' \neq T_2} w_{\{j\}, T_2'}} (\log B)^{-(\frac{1}{2}-\epsilon)A}.
\]
Summing over all $(w_{S, T})_{(S, T)\neq (\{i\}, T_1), (\{j\}, T_2)}$ gives the desired bound, upon noting that $|\mathcal{I}| \leq \sum_S |W_S| \leq (2^n)^2 = 4^n$ and $w_{S, T} \leq (\log B)^{100}$ for $|S| \geq 2$.
\end{proof}

\subsubsection{Siegel--Walfisz}
Next we treat the contribution when $w_{\{j\}, U} > \exp((\log B)^{1/4})$ and all indices $(S, T)\in\mathcal{I}$ linked to $(\{j\}, U)$ satisfy $w_{S, T} \leq (\log B)^A$ for a fixed real number $A > 0$. Our principal tool will be the Siegel--Walfisz theorem.

\begin{theorem}[Siegel--Walfisz]
\label{tSW}
Let $A > 1$ be a real number. Then there exists $C > 0$ such that for all $X \geq 100$, for all integers $q$ satisfying $q \leq (\log X)^A$ and all integers $d$ coprime to $q$
\[
\left|\sum_{\substack{p \leq X \\ p \equiv d \bmod q}} 1 - \frac{\Li(X)}{\phi(q)}\right| \leq \frac{CX}{(\log X)^A},
\]
where $\phi$ is Euler's totient function.
\end{theorem}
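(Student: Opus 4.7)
This is the classical Siegel--Walfisz theorem, included here as a black box. The proof I would give is the standard one from Davenport's \emph{Multiplicative Number Theory}: reduce to a weighted sum over prime powers, decompose via Dirichlet characters, and apply the truncated explicit formula together with a sufficiently strong zero-free region for $L(s,\chi)$.

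First I would replace $\pi(x;q,d)$ by the Chebyshev-type sum $\psi(x;q,d) = \sum_{n \leq x,\, n \equiv d\, (q)} \Lambda(n)$. The contribution of proper prime powers to the discrepancy is $O(\sqrt{x}\log x)$ and is absorbed into the error, while partial summation turns an asymptotic for $\psi$ with main term $x/\phi(q)$ into one for $\pi$ with main term $\Li(x)/\phi(q)$ of the same quality. Orthogonality of Dirichlet characters modulo $q$ gives
\[
\psi(x;q,d) = \frac{1}{\phi(q)} \sum_{\chi \bmod q} \overline{\chi}(d)\, \psi(x,\chi), \qquad \psi(x,\chi) = \sum_{n \leq x} \chi(n)\Lambda(n).
\]
The principal character yields the expected main term $x/\phi(q)$ up to the standard prime number theorem error $O(x \exp(-c\sqrt{\log x}))$. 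Each non-principal character, after a short reduction to the associated primitive character, is treated via the truncated explicit formula
\[
\psi(x,\chi) = -\sum_{\rho:\,|\Im \rho| \leq T} \frac{x^\rho}{\rho} + O\!\left(\frac{x(\log qx)^2}{T}\right),
\]
which reduces matters to controlling the non-trivial zeros of $L(s,\chi)$.

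The classical de la Vall\'ee Poussin zero-free region handles all such zeros uniformly, with the single exception of a possible real zero of a unique real primitive character modulo $q$. This exceptional Siegel zero is the main (and well-known) obstacle. I would defeat it by invoking Siegel's theorem: for every $\epsilon > 0$ there exists an ineffective constant $c(\epsilon) > 0$ such that any Siegel zero $\beta$ of any real character modulo $q$ satisfies $1 - \beta \geq c(\epsilon) q^{-\epsilon}$. Choosing $\epsilon$ smaller than $1/A$ and picking $T = \exp(\sqrt{\log x})$ to balance the remaining terms, the Siegel-zero contribution is at most $x\exp(-c(\epsilon)(\log x)^{1 - A\epsilon})$, which decays faster than any fixed power of $\log x$. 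Summing the $\phi(q) \leq (\log x)^A$ character contributions costs only a logarithmic factor and is absorbed, giving the claimed bound $CX/(\log X)^A$ uniformly for $q \leq (\log X)^A$. The price is that $C$ depends ineffectively on $A$, which is of no consequence for its use in the present paper.
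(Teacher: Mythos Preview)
The paper states Theorem~\ref{tSW} without proof: it is simply quoted as the classical Siegel--Walfisz theorem and then used to verify the hypothesis~\eqref{eSWAssumption1} of Theorem~\ref{tKou}. Your proposal correctly identifies this and supplies the standard textbook argument (reduction to $\psi(x;q,d)$, character decomposition, truncated explicit formula, de la Vall\'ee Poussin zero-free region together with Siegel's ineffective lower bound on a possible exceptional zero), which is exactly what one finds in Davenport. Since the paper offers no proof at all, there is nothing to compare against; your sketch is accurate and appropriate.
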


We shall not directly apply Theorem~\ref{tSW}. Instead we shall use it to verify the conditions~\eqref{eSWAssumption1} in Theorem~\ref{tKou}. This theorem gives uniform bounds on partial sums of $f$ provided that it satisfies a Siegel--Walfisz condition and is not too large on prime powers.

\begin{theorem}
\label{tKou}
Let $Q \geq 2$ be a parameter and let $f$ be a multiplicative function with
\begin{equation}
\label{eSWAssumption1}
\left|\sum_{p \leq Z} f(p) \log p\right| = O_A\left(\frac{Z}{(\log Z)^A}\right) \quad \quad \textup{ for } Z \geq Q
\end{equation}
for all $A > 0$. Let $k, J \in \Z_{\geq 1}$ and let $\epsilon > 0$ be a real number. Assume that $|f| \leq \tau_k$, where $\tau_k$ is the $k$-fold divisor function. Then we have the estimate
\[
\left|\sum_{n \leq X} f(n)\right| = O\left(\frac{X (\log Q)^{2k + J - 1}}{(\log X)^{J + 1}}\right)
\]
for all $X \geq e^{(\log Q)^{1 + \epsilon}}$. The implied constant depends only on $k$, $J$, $\epsilon$ and the implied constant in~\eqref{eSWAssumption1} for $A$ sufficiently large in terms of $k$, $J$, $\epsilon$ only.
\end{theorem}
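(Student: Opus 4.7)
\medskip

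The plan is to follow the Landau--Selberg--Delange framework in the form developed by Koukoulopoulos. The starting point is the convolution identity
\[
f(n) \log n = (\Lambda_f \ast f)(n),
\]
where $\Lambda_f$ is the arithmetic function supported on prime powers determined by the Dirichlet series identity $-F'(s)/F(s) = \sum_n \Lambda_f(n)/n^s$ for $F(s) = \sum_n f(n)/n^s$. Since $|f(p^j)| \leq \tau_k(p^j) = \binom{k+j-1}{j}$, the recursion $\Lambda_f(p^j) = j f(p^j)\log p - \sum_{i=1}^{j-1} f(p^{j-i}) \Lambda_f(p^i)$ yields a bound of the form $|\Lambda_f(p^j)| \leq c_{k,j} \log p$, and in particular $|\Lambda_f| \leq \tau_{k'} \Lambda$ for some $k'$ depending only on $k$. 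Our goal is to first estimate $\sum_{n \leq X} f(n) \log n$ and then recover the estimate for $\sum_{n \leq X} f(n)$ by partial summation.

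The next step is to verify that the hypothesis \eqref{eSWAssumption1} propagates from $f$ to $\Lambda_f$. Since $\Lambda_f - f \cdot \log$ is supported on prime powers $p^j$ with $j \geq 2$ whose contribution is absolutely summable, the tail $\sum_{p^j \leq Z,\, j \geq 2} \Lambda_f(p^j) = O(\sqrt{Z})$. Thus $\sum_{d \leq Z} \Lambda_f(d) = \sum_{p \leq Z} f(p) \log p + O(\sqrt{Z})$, and this inherits the Siegel--Walfisz saving from \eqref{eSWAssumption1} for $Z \geq Q$. Applying the convolution identity, we obtain
\[
\sum_{n \leq X} f(n) \log n = \sum_{d \leq X} \Lambda_f(d) \sum_{m \leq X/d} f(m).
\]
We split this according to whether $d \leq X/Q$ or $d > X/Q$. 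On the inner sum in the first range we can apply the same bound inductively (with a smaller value of $J$); in the second range, $\sum_{m \leq X/d} f(m)$ is bounded trivially by $O(\tau_k \ast 1)(X/d) \ll (X/d)(\log X)^{k-1}$, and we exploit the Siegel--Walfisz bound on $\Lambda_f$ via partial summation, which is valid because $X/d \geq Q$. Running this induction on $J$, starting from the trivial Shiu-type bound $\sum_{n\leq X} |f(n)| \leq \sum_{n \leq X} \tau_k(n) \ll X(\log X)^{k-1}$ for $J = -k$, yields at each step a factor of $(\log Q)^2$ (two from the integrations against $\Lambda_f$) and gains a factor of $\log X$. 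This accounts for the exponents $(\log Q)^{2k + J - 1}$ and $(\log X)^{J+1}$ in the statement, with the condition $X \geq e^{(\log Q)^{1+\epsilon}}$ ensuring at each stage that the ranges $X/d \geq Q$ actually contain the bulk of the mass.

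The main obstacle is bookkeeping: one must verify that the constants accumulated through the induction depend only on $k, J, \epsilon$ and the implied constant in \eqref{eSWAssumption1}, and that the exponent $2k + J - 1$ is the correct aggregation of losses. A subtler technical point is the treatment of the regime $d$ close to $X$, where the trivial bound on the inner sum must be balanced against the Siegel--Walfisz gain on $\Lambda_f$; the condition $X \geq e^{(\log Q)^{1+\epsilon}}$ is chosen precisely so that the threshold separating the two regimes lies well above $Q$. Since this is a known result (essentially Theorem~14.2 of Koukoulopoulos, \emph{The Distribution of Prime Numbers}), one can either cite it directly or carry out the induction described above in a few pages of routine analytic manipulation.
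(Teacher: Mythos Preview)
The paper's own proof is a one-line citation: it invokes \cite[Theorem~13.2]{Kou} (Koukoulopoulos, \emph{The Distribution of Prime Numbers}) in the special case $\kappa = 0$, together with \cite[Remark~13.3]{Kou}. Your proposal ultimately lands in the same place, since you acknowledge at the end that the result can be cited directly from Koukoulopoulos; the sketch of the Landau--Selberg--Delange induction preceding that is correct in outline but superfluous for the paper's purposes. One minor discrepancy: you cite Theorem~14.2, whereas the paper cites Theorem~13.2 --- worth double-checking against the edition in hand, as the relevant statement (multiplicative $f$ with $|f|\le\tau_k$ and Siegel--Walfisz on primes, conclusion with the precise exponents $2k+J-1$ and $J+1$) is Theorem~13.2 in the 2019 AMS edition.
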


\begin{proof}
This is a special case of~\cite[Theorem~13.2]{Kou} with $\kappa = 0$, see also \cite[Remark 13.3]{Kou}.
\end{proof}

Let $A > 0$ be a fixed real number. At this point, we are ready to bound
\[
\sum_{\substack{(w_{S, T})_{(S, T) \neq (\{j\}, U)} \\ w_{S, T} \leq (\log B)^A \text{ if } (S, T) \in \mathcal{U} \\ \prod_{(S, T) \in \mathcal{U}} w_{S, T} \neq 1}} \sum_{w_{\{j\}, U} > \exp((\log B)^{1/4})}^{\flat} F((w_{S, T})_{S, T}),
\]
where $\mathcal{U} = \{(S, T)\in \mathcal{I} :(S, T) \text{ and } (\{j\}, U) \text{ are linked}\}$. We apply the triangle inequality in order to isolate the variable $w_{\{j\}, U} > \exp((\log B)^{1/4})$ as follows
\begin{equation}
\label{eq:fsw1}
\sum_{\substack{(w_{S, T})_{(S, T) \neq (\{j\}, U)} \\ w_{S, T} \leq (\log B)^A \text{ if } (S, T) \in \mathcal{U}\\
\prod_{(S, T) \in \mathcal{U}} w_{S, T} \neq 1}} \left|\sum_{w_{\{j\}, U} > \exp((\log B)^{1/4})}^{\flat} f(w_{\{j\}, U})\right|,
\end{equation}
where $f(n)$ is the multiplicative function defined by
\begin{multline*}
\mu^2\left(2n \cdot \prod_{(S, T) \neq (\{j\}, U)} w_{S, T}\right) \times |W_{\{j\}}|^{-\omega(n)} \times \left(\frac{\prod_{k \in U} s_k 2^{\lambda_k}}{n}\right) \times \\ \times \prod_{S_2, T_2} \left(\frac{w_{S_2, T_2}}{n}\right)^{|S_2 \cap U|} \times \prod_{S_1, T_1} \left(\frac{n}{w_{S_1, T_1}}\right)^{|\{j\} \cap T_1|}.
\end{multline*}
Note that the summation condition $\flat$ fixes $w_{\{j\}, U}  \bmod 8$. To handle this condition, we multiply $f(n)$ by a sum of Dirichlet characters $\chi$ modulo $8$. Applying the triangle inequality again allows us to bound~\eqref{eq:fsw1} by
\[
\sum_\chi
\sum_{\substack{(w_{S, T})_{(S, T) \neq (\{j\}, U)} \\ w_{S, T} \leq (\log B)^A \text{ if } (S, T) \in \mathcal{U} \\
\prod_{(S, T) \in \mathcal{U}} w_{S, T} \neq 1}} \left|\sum_{w_{\{j\}, U}>\exp((\log B)^{1/4})}^{\flat} \chi f(w_{\{j\}, U})\right|,
\]
where $\chi$ runs over all Dirichlet characters modulo $8$.

We then apply Theorem~\ref{tKou} with $k \coloneqq 1$, $J$ a large integer in terms of $n$ only, $\epsilon \coloneqq 1/2$ and $Q \coloneqq \exp((\log B)^{1/8})$. The condition~\eqref{eSWAssumption1} of Theorem~\ref{tKou} is met thanks to Theorem~\ref{tSW}: here we claim that $\chi f$ has zero average over the primes by using that the variables are linked so the resulting Dirichlet character is not principal. 

Indeed, the condition $\prod_{(S, T) \in \mathcal{U}} w_{S, T} \neq 1$ implies that there exists $(S_1,T_1)$ linked to $(\{j\}, U)$ such that $p\mid w_{S_1, T_1}$ for some odd prime $p$. Since the $w_{S, T}$ are pairwise coprime, $p$ must not divide any $w_{S, T}$ other than $w_{S_1, T_1}$. Then the conductor of the associated Dirichlet character is divisible by $p$ because exactly one of $|S_1\cap U|$ and $ |\{j\} \cap T_1|$ is odd. This gives the claim.

Summing the resulting bound over the remaining variables shows that this case ends up in the error term.

\begin{lemma}
\label{lemma:SW}
Let $A > 0$ be a real number and let $n, J \in \Z_{\geq 1}$. Fix $(\{j\}, U)\in\mathcal{I}$ and let $\mathcal{U} = \{(S, T)\in\mathcal{I} : (S, T) \textup{ and } (\{j\}, U) \textup{ are linked}\}$. Then 
\[
\sum_{\substack{(w_{S, T})_{(S, T) \neq (\{j\}, U)} \\ w_{S, T} \leq (\log B)^A \textup{ if } (S, T) \in \mathcal{U} \\ \prod_{(S, T) \in \mathcal{U}} w_{S, T} \neq 1}} \left|\sum_{w_{\{j\}, U} > \exp((\log B)^{1/4})}^{\flat} F((w_{S, T})_{S, T})\right|
\ll 
\frac{B^n (\log B)^{\frac{J}{4} + 1}}{(\log B)^{J + 1 - 100 \cdot 4^n}},
\]
where the implied constant depends on $A, J, n$ only.
\end{lemma}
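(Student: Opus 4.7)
I would follow exactly the strategy sketched in the paragraphs preceding the statement: isolate the inner sum over $w_{\{j\},U}$ via the triangle inequality, detect its congruence class modulo $8$ by Dirichlet characters, and estimate the resulting character sum using Theorem~\ref{tKou}. Concretely, the inner sum becomes $\sum_{n > \exp((\log B)^{1/4})}^{\flat} f(n)$ where $f$ is the multiplicative function extracted from~\eqref{eDefF} by collecting the factors involving $w_{\{j\},U}$. I would expand the mod $8$ condition as a finite linear combination of Dirichlet characters $\chi$ mod $8$, then apply Theorem~\ref{tKou} to $\chi f$ with $k = 1$, $\epsilon = 1/2$, $Q = \exp((\log B)^{1/8})$ and $J$ a sufficiently large integer in terms of $n$. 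The bound $|\chi f| \leq 1 = \tau_1$ is immediate, and the range condition $X \geq e^{(\log Q)^{1+\epsilon}}$ holds comfortably since $X \geq \exp((\log B)^{1/4})$ while $(\log Q)^{3/2} = (\log B)^{3/16}$.

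The crux, and what I expect to be the main obstacle, is verifying the Siegel--Walfisz hypothesis~\eqref{eSWAssumption1} for $\chi f$. Applying quadratic reciprocity to the Legendre symbols in $f(p)$ that involve $w_{\{j\},U}$, the function $\chi(p)f(p)$ reduces, up to a bounded correction at $2$, to the evaluation at $p$ of a single Dirichlet character $\chi_1$ whose modulus divides $8 \cdot \prod_{(S,T) \in \mathcal U} w_{S,T}$. This uses that non-linked $(S,T)$ contribute an \emph{even} exponent $|S \cap U| + |\{j\} \cap T|$ and therefore produce a trivial Legendre symbol. By hypothesis, each $w_{S,T}$ with $(S,T) \in \mathcal U$ is bounded by $(\log B)^{A}$, so the modulus of $\chi_1$ is polylogarithmic.

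Non-principality of $\chi_1$ is what the hypothesis $\prod_{(S,T) \in \mathcal U} w_{S,T} \neq 1$ is designed to force. Indeed, under this hypothesis there is some linked $(S_1, T_1)$ with $w_{S_1, T_1} > 1$, hence an odd prime $p_0$ dividing it; pairwise coprimality of the $w_{S,T}$ ensures that $p_0$ divides no other variable; finally the linking relation $|S_1 \cap U| + |\{j\} \cap T_1| \equiv 1 \bmod 2$ guarantees that the exponent of the Legendre symbol at $p_0$ in $\chi_1$ is odd, placing $p_0$ in the conductor of $\chi_1$. Thus $\chi_1$ is non-principal, and Theorem~\ref{tSW} provides~\eqref{eSWAssumption1} for arbitrarily large $A$, provided $J$ is chosen large enough (in terms of $n$) relative to the polylogarithmic modulus.

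Once Theorem~\ref{tKou} delivers the bound $\ll X (\log Q)^{J+1}/(\log X)^{J+1} = X \cdot (\log B)^{-\Theta(J)}$ on the inner sum, I would sum over the remaining variables $(w_{S,T})_{(S,T) \neq (\{j\},U)}$. The variables with $|S| \geq 2$ contribute a factor of at most $(\log B)^{100 \cdot 4^n}$ configurations by the constraint $\prod_T w_{S,T} \leq (\log B)^{100}$; the unlinked $(\{i\}, T)$ variables multiply up to yield the remaining $B^{n-1}$, while the isolated $X \leq B/\prod_{T \neq U} w_{\{j\},T}$ accounts for the final factor of $B$. Combining these gives the stated bound $\ll B^n (\log B)^{J/4 + 1}/(\log B)^{J + 1 - 100 \cdot 4^n}$. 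Once non-principality of $\chi_1$ is in hand, the rest is routine bookkeeping.
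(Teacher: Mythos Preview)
Your proposal is correct and follows essentially the same approach as the paper: isolate $w_{\{j\},U}$ by the triangle inequality, expand the mod~$8$ congruence into Dirichlet characters, apply Theorem~\ref{tKou} with $k=1$, $\epsilon=1/2$, $Q=\exp((\log B)^{1/8})$, and verify the Siegel--Walfisz input~\eqref{eSWAssumption1} via Theorem~\ref{tSW} using exactly the non-principality argument you outline (the hypothesis $\prod_{(S,T)\in\mathcal U} w_{S,T}\neq 1$ supplies an odd prime $p_0$ dividing a unique $w_{S_1,T_1}$, and the linking parity forces $p_0$ into the conductor). The final summation over the frozen variables is handled just as you describe.
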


\section{Combinatorics}
\label{sComb}
We now study the expression $\prod_{S_1, T_1} \prod_{S_2, T_2} \left(\frac{w_{S_2, T_2}}{w_{S_1, T_1}}\right)^{|S_2 \cap T_1|}$ from a combinatorial perspective.

\subsection{Blocking sets}
We write $\mathcal{P}(A)$ for the power set of $A$, which we will view as a vector space over $\FF_2$ using symmetric difference of sets. Write $\pi: \mathcal{P}([n] \cup \{-\}) \rightarrow \mathcal{P}([n])$ for the natural projection map. Then we denote by $\OBMS$ the set of maps $f: \mathcal{P}([n]) \rightarrow \mathcal{P}([n] \cup \{-\})$ such that
\begin{itemize}
\item the image of singletons satisfy $f(\{i\}) \in V_{\{i\}}$ and $f(\varnothing) = \varnothing$,
\item the composed map $\pi\circ f: \mathcal{P}([n]) \rightarrow \mathcal{P}([n] \cup \{-\}) \rightarrow \mathcal{P}([n])$ is linear,
\item we have the alternating properties
$$
|\{i\} \cap f(\{j\})| = |\{j\} \cap f(\{i\})| , \quad |\{i\} \cap f(\{i\})| = 0.
$$
\end{itemize}
The space $\OBMS$ is naturally a vector space, where one can freely choose the minus sign for $f(S)$ for each $|S| \geq 2$. This space is an overarching space where all of our maps will naturally live. We will now define some natural quotients of $\OBMS$. 

\begin{definition}
Define $\FAlt$ to be the quotient of $\OBMS$, where we identify two maps $f, g \in \OBMS$ if $\pi(f(S)) = \pi(g(S))$ for all $|S| \geq 2$.

Define $\OAlt$ to be the quotient of $ \OBMS$, where we identify two maps $f, g \in \OBMS$ if $\pi(f(S)) = \pi(g(S))$ for all $|S| = 1$.

Define $\Alt$ to be the quotient of $ \OBMS$, where we identify two maps $f, g \in \OBMS$ if $\pi(f(S)) = \pi(g(S))$ for all $|S| \geq 1$ (equivalently $\pi \circ f = \pi \circ g$).
\end{definition}

We shall later see that $\FAlt$ is precisely the subordinate Brauer group for our problem. The relevance of $\Alt$ is that it shows up naturally in our character sum method, and is a quotient of $\FAlt$. Note that the quotient $\Alt$ effectively groups all maps that are the same up to the choice of minus signs. It turns out that it is convenient to fix a set of representatives for $\Alt$. There are several natural ways to do this, but it turns out that the one easiest for later calculations is the following diagram

\begin{equation}
\label{eq:Altses}
\begin{tikzcd}
0 \arrow{r} & \Jay \arrow{d}{=} \arrow{r} & \OBMS \arrow{r} \arrow{d}{q} & \OAlt\arrow{r} \arrow{d}{q} & 0 \\
0 \arrow{r} & \Jay \arrow{r} & \FAlt \arrow{r} & \Alt \arrow[swap]{ul}{s} \arrow{r} & 0,
\end{tikzcd}
\end{equation}
where $q$ denotes the natural quotient maps and where the set-theoretic section $s$ takes any representative in $ \OBMS$ such that the cardinality $|\{S : f(S) \in W_S\}|$ is maximised. We remark that this typically does not uniquely specify $s$. However, the composition $q \circ s$ is an homomorphism splitting the bottom exact sequence of~\eqref{eq:Altses}, and we define $\RAlt$ to be the image of $s$ in $ \OBMS$ (which we shall often view as sitting inside $\FAlt$ via the map $q$).

The vector space $\Jay$ is by definition the kernel, and can be explicitly thought of as maps $f: \mathcal{P}([n]) \rightarrow \mathcal{P}([n] \cup \{-\})$ with $f(\{i\}) \in \{\varnothing, \{-\}\}$ for $\{i\} \in \mathcal{D}$ and $f(S) = \varnothing$ otherwise, and is naturally isomorphic to the algebraic part of the subordinate Brauer group. Note that we may naturally identify $\Jay$ with an element of $\mathcal{P}(\{i : \{i\} \in \mathcal{D}\})$ (with the correspondence sending a map $f$ to the subset $J \coloneqq \{i : f(\{i\}) = \{-\}\}$), and we shall often do so.

We record some properties that will be useful later. Throughout this paper, we shall maintain the convention that the notation $\{i, j\}$ denotes a $2$-set, i.e.~$i$ and $j$ are implicitly assumed to be distinct.

\begin{lemma}
\label{lemma:projqs}
Suppose that $g \in \FAlt$ corresponds to $(J, f)$ where $J \in \Jay$ and $f \in \RAlt$ under the section $s$ given by~\eqref{eq:Altses}. Then we have for all $i, j, k \in [n]$ 
\[
|\{i\} \cap f(\{j\})| = |\{i\} \cap g(\{j\})| \quad \textup{ and } \quad
|\{-\} \cap f(\{k\})| + |\{k\}\cap J| = |\{-\} \cap g(\{k\})|.
\]
In particular, for all $S\subseteq [n]$, we have
\[
\sum_{k \in S} |\{-\} \cap f(\{k\})| + \sum_{\{i, j\} \subseteq S} |\{i\} \cap f(\{j\})| + |J\cap S| = \sum_{k \in S} |\{-\} \cap g(\{k\})| + \sum_{\{i,j\}\subseteq S}|\{i\} \cap g(\{j\})|.
\]
\end{lemma}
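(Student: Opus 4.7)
The plan is to work with representatives in $\OBMS$ and exploit the splitting $\FAlt \cong \Jay \oplus \RAlt$ provided by the section $s$ in~\eqref{eq:Altses}. Writing $g = f + J$ in $\FAlt$, I observe that $\FAlt$ records singleton values exactly (values on sets of size at least $2$ are only remembered modulo the $\{-\}$-component), so the decomposition lifts to a genuine equality $g(\{k\}) = f(\{k\}) + J(\{k\})$ of elements of $\mathcal{P}([n] \cup \{-\})$, where $+$ denotes symmetric difference. Recall from the explicit description of $\Jay$ that $J(\{k\}) = \{-\}$ when $k \in J$ and $J(\{k\}) = \varnothing$ otherwise.

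From this, the first identity is immediate: since $J(\{j\}) \subseteq \{-\}$ contains no element of $[n]$, intersecting both sides of $g(\{j\}) = f(\{j\}) + J(\{j\})$ with $\{i\}$ yields $\{i\} \cap g(\{j\}) = \{i\} \cap f(\{j\})$. The second identity, taking $\{-\}$-components, initially gives only a congruence modulo $2$:
\[
|\{-\} \cap g(\{k\})| \equiv |\{-\} \cap f(\{k\})| + |\{k\} \cap J| \pmod{2},
\]
using $|\{-\} \cap J(\{k\})| = \mathbf{1}_{k\in J} = |\{k\} \cap J|$. The main subtle step, and the one I expect to be the principal obstacle, is upgrading this to an integer equality, which amounts to showing the two summands are never simultaneously nonzero. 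For this, the maximality defining $s$ is essential: when $\{k\} \in \mathcal{D}$, one can toggle the $\{-\}$-component of $f(\{k\})$ by adding the $\Jay$-element supported at $\{k\}$ without affecting any other $f(S)$, so maximising $|\{S : f(S) \in W_S\}|$ forces $f(\{k\}) \in W_{\{k\}}$ and hence $|\{-\} \cap f(\{k\})| = 0$. When $\{k\} \notin \mathcal{D}$, the explicit description of $\Jay$ gives $J(\{k\}) = \varnothing$ and hence $|\{k\} \cap J| = 0$.

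Finally, the ``in particular'' statement follows by summing the first identity over pairs $\{i, j\} \subseteq S$ and the second over $k \in S$, combined with the identity $\sum_{k \in S} |\{k\} \cap J| = |S \cap J|$.
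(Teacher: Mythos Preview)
Your proof is correct. The paper states this lemma without proof, so there is no argument to compare against; your approach---lifting the decomposition $g = f + J$ to an exact equality on singletons, then using the maximality built into the section $s$ to show $|\{-\} \cap f(\{k\})| = 0$ whenever $\{k\} \in \mathcal{D}$---is precisely the natural one and fills the gap cleanly.
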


\begin{example}
At this stage we work out an elaborate example. The main point of the example is to disabuse the reader of the seductive but false idea that $f(\{i\}) \in V_{\{i\}}$ implies $f(S) \in V_S$ for $|S| \geq 2$. We consider the quaternion algebras
\[
\begin{cases}
(t_2t_3, t_1t_4), \\
(t_2t_5, t_1t_6), \\
(t_4t_5, t_3t_6).
\end{cases}
\]
Given these quaternion algebras, we start by computing the possibilities for the vector spaces $V_S$ when $S$ is a singleton.
\begin{center}
\begin{tabular}{|c||c|}
\hline
$i$ & $V_{\{i\}}$ \\ \hline\hline
$1$ & $\{\varnothing,\{2,3\},\{2,5\},\{3,5\}\}$ \\ \hline
$2$ & $\{\varnothing,\{1,4\},\{1,6\},\{4,6\}\}$ \\ \hline
$3$ & $\{\varnothing,\{1,4\},\{4,5\},\{1,5\}\}$ \\ \hline
$4$ & $\{\varnothing,\{2,3\},\{3,6\},\{2,6\}\}$ \\ \hline
$5$ & $\{\varnothing,\{1,6\},\{3,6\},\{1,3\}\}$ \\ \hline
$6$ & $\{\varnothing,\{2,5\},\{4,5\},\{2,4\}\}$ \\ \hline
\end{tabular}
\end{center}
The vector space $\FAlt \cong \Alt$ is generated by $f_1, f_2, f_3, f_4$ defined in the following table.
\begin{center}
\begin{tabular}{|c||c|c|c|c|}
\hline
$i$ & $f_1(\{i\})$ & $f_2(\{i\})$ & $f_3(\{i\})$ & $f_4(\{i\})$ \\ \hline\hline
$1$ & $\varnothing$ & $\{2,3\}$ & $\{3,5\}$ & $\varnothing$ \\ \hline
$2$ & $\varnothing$ & $\{1,4\}$ & $\varnothing$ & $\{4,6\}$ \\ \hline
$3$ & $\{4,5\}$ & $\{1,4\}$ & $\{1,5\}$ & $\varnothing$ \\ \hline
$4$ & $\{3,6\}$ & $\{2,3\}$ & $\varnothing$ & $\{2,6\}$ \\ \hline
$5$ & $\{3,6\}$ & $\varnothing$ & $\{1,3\}$ & $\varnothing$ \\ \hline
$6$ & $\{4,5\}$ & $\varnothing$ & $\varnothing$ & $\{2,4\}$ \\ \hline
\end{tabular}
\end{center}
Note that $\pi(f_3(\{1, 2\})) = \{3, 5\}$. This shows that $f_3(\{1, 2\}) $ is not in the subspace $V_{\{1, 2\}} = \{\varnothing,\ \{-, 1, 2, 3, 4\},\ \{-, 1, 2, 5, 6\},\ \{3, 4, 5, 6\}\}$.
\end{example}

We now introduce the central definition of this section. Recall the definition of unlinked indices from Definition~\ref{def:linked}. 

\begin{definition}
A blocking set is a subset $\mathcal{B} \subseteq \mathcal{I}$ such that
\begin{itemize}
\item for every $k \in [n]$, there exists some $T \in W_{\{k\}}$ such that $(\{k\}, T) \in \mathcal{B}$;
\item every $(\{k\}, T) \in \mathcal{B}$ (with $k \in [n]$ and $T \in W_{\{k\}}$) and every $(S, T') \in \mathcal{B}$ are unlinked.
\end{itemize}
\end{definition}

At this stage we are ready to completely classify the blocking sets. This is a critical ingredient in our arguments, and we shall see in Section~\ref{sLeading} how blocking sets are intrinsically related to the subordinate Brauer group. 

\begin{lemma}
\label{lBlock}
Let $\mathcal{B}$ be a blocking set. Then there exists a unique element $g \in \RAlt$ with
\begin{equation}
\label{eq:blocking}
\left\{(\{i\}, g(\{i\})) : i \in [n]\right\} \subseteq \mathcal{B} \subseteq \left\{(S, g(S)) : \varnothing \subset S \subset [n],\ g(S) \in W_S\right\}.
\end{equation}
Conversely, any set of this shape is a blocking set.
\end{lemma}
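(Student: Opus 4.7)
The plan is to extract the map $g$ from $\mathcal{B}$ by reading it off on singletons, then verify the double containment in~\eqref{eq:blocking}. Given a blocking set $\mathcal{B}$, the first defining property furnishes, for each $i \in [n]$, an element $T_i \in W_{\{i\}}$ with $(\{i\}, T_i) \in \mathcal{B}$. The second property applied to $(\{i\}, T_i)$ and $(\{j\}, T_j)$ reads $|\{j\} \cap T_i| \equiv |\{i\} \cap T_j| \pmod{2}$, which, since $T_i, T_j \subseteq [n]$, is precisely the alternating identity $j \in T_i \iff i \in T_j$ required of an $\OBMS$-map. A direct inspection of the generators $g_{j, \{i\}}$ of $V_{\{i\}}$ in the table (using that by pairwise coprimality $i$ lies in at most one of $S_{j, 1}, S_{j, 2}, S_{j, 3}$ for each row $j$) shows $i \notin T$ for every $T \in W_{\{i\}}$, giving the remaining alternating condition $|\{i\} \cap T_i| = 0$. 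I would therefore set $g(\{i\}) \coloneqq T_i$, extend $\pi \circ g$ linearly to all of $\mathcal{P}([n])$, and apply the section $s$ from diagram~\eqref{eq:Altses} to lift this to a canonical $g \in \RAlt$.

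The crux is verifying the second containment $\mathcal{B} \subseteq \{(S, g(S)) : g(S) \in W_S\}$. For any $(S, T') \in \mathcal{B}$, the second property applied to $(\{i\}, T_i)$ and $(S, T')$ gives
\[
|S \cap T_i| \equiv |\{i\} \cap T'| \pmod{2} \quad \text{for every } i \in [n].
\]
Rewriting the left side via the alternating identity as $\sum_{j \in S} [i \in T_j]$, and then using the linearity of $\pi \circ g$ to identify this sum modulo $2$ with $[i \in \pi(g(S))]$, I obtain $T' = \pi(g(S))$ as subsets of $[n]$. Since $T' \in W_S$ and $\pi|_{W_S}$ is injective with image $\pi(V_S)$, the maximality property defining the section $s$ forces $g(S) = T'$ (and in particular $g(S) \in W_S$). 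Uniqueness of $g$ is then immediate: the two containments together force $g(\{i\})$ to equal the unique $T$ with $(\{i\}, T) \in \mathcal{B}$, and $\RAlt$-elements are determined by their values on singletons.

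The converse is a direct reversal. For $g \in \RAlt$ and any $\mathcal{B}$ satisfying~\eqref{eq:blocking}, the first property is automatic because $g(\{i\}) \in W_{\{i\}}$ for every section representative, while the second reduces to the symmetric computation $|S \cap g(\{k\})| \equiv [k \in \pi(g(S))] \equiv |\{k\} \cap g(S)| \pmod{2}$. The main obstacle I expect is the careful bookkeeping of the $\{-\}$-sign as one traverses the quotient tower $\OBMS \twoheadrightarrow \FAlt \twoheadrightarrow \Alt$ and back via the section~$s$; in particular, upgrading $\pi(T') = \pi(g(S))$ (an equality at the level of $\Alt$) to the finer equality $g(S) = T' \in W_S$ in $\RAlt$ hinges on $W_S$ being a complement to $\langle \{-\} \rangle$ in $V_S$ whenever $\{-\} \in V_S$. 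I would use Lemma~\ref{lemma:projqs} to organise this bookkeeping.
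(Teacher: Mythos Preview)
Your proposal is correct and follows essentially the same route as the paper: read off the map on singletons from $\mathcal{B}$, verify the alternating conditions, pass to the canonical $\RAlt$ representative via the section $s$, and then for any $(S,T') \in \mathcal{B}$ use linearity of $\pi \circ g$ together with injectivity of $\pi|_{W_S}$ and the maximality built into $s$ to conclude $g(S) = T'$. The only cosmetic difference is that the paper first builds an arbitrary $f' \in \OBMS$ satisfying both containments and then replaces it by the $\RAlt$ representative, whereas you invoke $s$ from the outset; the content is identical. Your closing remark about needing Lemma~\ref{lemma:projqs} is a red herring: you have already handled the $\{-\}$ bookkeeping via the maximality of $s$ and the injectivity of $\pi|_{W_S}$, and that lemma plays no role here.
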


\begin{proof}
The converse is clear, so let us prove the first part. For the first part, let $\mathcal{B}$ be a blocking set. By the definition of blocking set, we may choose a subset $J_k \in W_{\{k\}}$ satisfying $(\{k\}, J_k) \in \mathcal{B}$. The elements $(\{i\}, J_i), (\{j\}, J_j)\in\mathcal{B}$ must be unlinked for all $i, j \in[n]$, so $|\{i\} \cap J_j| = |\{j\} \cap J_i|$. Since $W_{\{k\}}$ is generated by sets that do not contain $k$, we also have $|\{k\} \cap J_k| = 0$. Therefore there exists $f \in \OBMS$ such that $f(\{k\}) = J_k$ for every $k \in [n]$.

We claim that $\pi(f(S)) = T$ for all $(S, T) \in \mathcal{B}$. Since $(\{k\}, J_k)$ and $(S, T)$ are unlinked, we get from negating equation~\eqref{eLinked}
\[
k \in T \Longleftrightarrow |S \cap J_k| \equiv 1 \bmod 2.
\]
Since $f \in \OBMS$, we can readily check that $|S \cap f(\{k\})| \equiv |f(S) \cap \{k\}| \bmod 2$. Furthermore, by the construction of $f$ we have $f(\{k\}) = J_k$, so
\[
k \in T \Longleftrightarrow |f(S)\cap \{k\}| = 1,
\]
which proves the claim. 

Note that for a given $T$, there is at most one set $U \in W_S$ with $\pi(U) = T$ by construction of $W_S$. Hence, we deduce from the claim that there exists some $f' \in \OBMS$ with
$$
\left\{(\{i\}, f'(\{i\})) : i \in [n]\right\} \subseteq \mathcal{B} \subseteq \left\{(S, f'(S)) : \varnothing \subset S \subset [n],\ f'(S) \in W_S\right\}.
$$
There is a unique representative $g \in \RAlt$ satisfying the first inclusion in equation~\eqref{eq:blocking}. Since $g$ maximizes the cardinality $| \{S : f'(S) \in W_S\}|$ among all $f'$ lifting a given map $\mathcal{P}([n]) \rightarrow \mathcal{P}([n])$ by construction, the second inclusion in equation~\eqref{eq:blocking} holds.
\end{proof}

We split the sum according to whether $w_{\{k\}, T} > \exp((\log B)^{1/4}) \eqqcolon Q'$ for $k \in [n]$ and whether $w_{S, T} = 1$ for $|S| \geq 2$. For an arbitrary subset $\mathcal{J} \subseteq \mathcal{I}$, we define the restricted sum
\[
N_1(B, \mathbf{s}, \mathbf{u}, \boldsymbol{\lambda}, \mathbf{b}, \mathcal{J}) \coloneqq \sum_{\substack{(w_{S, T})_{|S| \geq 2} \\ 1 < w_{S, T} \leq (\log B)^{100} \text{ if } (S, T) \in \mathcal{J} \\ w_{S, T} = 1 \text{ if } (S, T) \notin \mathcal{J}}}\sum_{\substack{(w_{\{k\}, T})_k \\ w_{\{k\}, T} > Q' \text{ if } (\{k\}, T) \in \mathcal{J} \\ w_{\{k\}, T} \leq Q' \text{ if } (\{k\}, T) \notin \mathcal{J}}}^{\flat} F((w_{S, T})_{S, T}).
\]
It is clear that
$$
N_1(B, \mathbf{s}, \mathbf{u}, \boldsymbol{\lambda}, \mathbf{b}) = \sum_{\mathcal{J} \subseteq \mathcal{I}} N_1(B, \mathbf{s}, \mathbf{u}, \boldsymbol{\lambda}, \mathbf{b}, \mathcal{J}).
$$
The main contribution in this decomposition will come from those $\mathcal{J}$ that are blocking sets. 

\begin{lemma}
\label{lemma:notblocking}
There exists $\delta' > 0$ depending only on $n$ such that
\[
\sum_{\mathcal{J} \textup{ is not a blocking set}} N_1(B, \mathbf{s}, \mathbf{u}, \boldsymbol{\lambda}, \mathbf{b}, \mathcal{J}) \ll \frac{B^n}{(\log B)^{\delta'}}.
\]
\end{lemma}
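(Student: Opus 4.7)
The plan is to separate the two ways a set $\mathcal{J}$ can fail to be a blocking set and to treat each by either a trivial bound or by invoking one of Lemmas~\ref{lemma:LS} or~\ref{lemma:SW}. Unpacking the definition of a blocking set, any non-blocking $\mathcal{J}$ satisfies at least one of the following: (a) some $k \in [n]$ has $(\{k\}, T) \notin \mathcal{J}$ for every $T \in W_{\{k\}}$, or (b) $\mathcal{J}$ contains a linked pair $(\{k\}, T_1), (S_2, T_2)$. Since the number of such $\mathcal{J}$ is $O_n(1)$, it is enough to bound each individual $N_1(B, \mathbf{s}, \mathbf{u}, \boldsymbol{\lambda}, \mathbf{b}, \mathcal{J})$ by $O(B^n/(\log B)^{\delta'})$.

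In case (a), every variable $w_{\{k\}, T}$ with $T \in W_{\{k\}}$ satisfies $w_{\{k\}, T} \leq Q' = \exp((\log B)^{1/4})$, which forces $v_{\{k\}}$ and hence $|t_k|$ to be of size at most $\exp(O_n((\log B)^{1/4}))$. The trivial bound $|F| \leq 1$ combined with summing over the remaining $t_i \in [-B, B]$ then yields a contribution of size $O(B^{n-1+o(1)})$, which is absorbed into the error term. In case (b), when both linked indices are singletons (so $|S_2| = 1$), both of the corresponding $w$-variables exceed $Q' \gg (\log B)^A$ for every fixed $A$, so Lemma~\ref{lemma:LS} applied with $A$ chosen large in terms of $n$ so that $(\tfrac{1}{2} - \epsilon)A - 100 \cdot 4^n > \delta'$ gives the desired saving. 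When $|S_2| \geq 2$, the small variable $1 < w_{S_2, T_2} \leq (\log B)^{100}$ provides precisely the witness $\prod_{(S, T) \in \mathcal{U}} w_{S, T} \neq 1$ required to invoke Lemma~\ref{lemma:SW} on $w_{\{k\}, T_1}$ with $(\{j\}, U) = (\{k\}, T_1)$.

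The main obstacle in this last sub-case is that Lemma~\ref{lemma:SW} additionally requires $w_{S, T} \leq (\log B)^A$ for every $(S, T)$ linked to $(\{k\}, T_1)$, whereas singletons $(\{k'\}, T') \notin \mathcal{J}$ linked to $(\{k\}, T_1)$ are only a priori constrained by $w_{\{k'\}, T'} \leq Q'$, which is much larger than $(\log B)^A$. To circumvent this, I would further decompose the sum by specifying, for each such singleton, whether $w_{\{k'\}, T'} \leq (\log B)^A$ or $(\log B)^A < w_{\{k'\}, T'} \leq Q'$. In the first alternative every linked variable satisfies the hypothesis of Lemma~\ref{lemma:SW}, which with $J$ chosen large in terms of $n$ and $\delta'$ yields the bound $O(B^n/(\log B)^{\delta'})$; in the second alternative the chosen singleton together with $(\{k\}, T_1)$ forms a linked pair of singletons whose variables both exceed $(\log B)^A$, so Lemma~\ref{lemma:LS} applies as in the first sub-case of (b). Summing over the finitely many refinements and the finitely many non-blocking $\mathcal{J}$ completes the proof.
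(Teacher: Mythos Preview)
Your proof is correct and follows essentially the same approach as the paper's own argument: the same trichotomy (no singleton for some $k$; linked pair of singletons in $\mathcal{J}$; singleton linked to a size-$\geq 2$ index in $\mathcal{J}$), handled respectively by a trivial bound, Lemma~\ref{lemma:LS}, and the combination of Lemma~\ref{lemma:LS} (for the range $(\log B)^A < w_{\{k'\},T'} \le Q'$) with Lemma~\ref{lemma:SW} (for the remaining range). You are in fact more explicit than the paper about the further dyadic-type split needed in the third case to meet the hypothesis $w_{S,T}\le (\log B)^A$ of Lemma~\ref{lemma:SW}, and your implicit observation that in this third case no singleton linked to $(\{k\},T_1)$ can lie in $\mathcal{J}$ is exactly what the paper states as ``by the previous case''.
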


\begin{proof}
The total number of possible $\mathcal{J}$ depends only on $n$, so it suffices to bound each $N_1(B, \mathbf{s}, \mathbf{u}, \boldsymbol{\lambda}, \mathbf{b}, \mathcal{J})$ individually. The set $\mathcal{J}$ imposes the summation conditions
\[ 
\begin{array}{cl}
w_{\{k\}, T} > Q' & \text{for } (\{k\}, T) \in \mathcal{J} \\
w_{\{k\}, T} \leq Q' & \text{for } (\{k\}, T) \notin \mathcal{J} \\
1 < w_{S, T} \leq (\log B)^{100} & \text{for } (S, T) \in \mathcal{J}, |S| \geq 2 \\
w_{S, T} = 1 & \text{for } (S, T) \notin \mathcal{J}, |S| \geq 2.
\end{array}
\]
We now distinguish various cases depending on the structure of $\mathcal{J}$. Firstly, suppose that there exists some $k \in [n]$ such that $(\{k\}, T) \not \in \mathcal{J}$ for all $T$. In this case, the sum is trivially bounded by $\ll_\epsilon B^{n - 1 + \epsilon}$. Henceforth, we may assume that for every $k \in [n]$, there exists some $T_k$ such that $(\{k\}, T_k) \in \mathcal{J}$. 

Secondly, suppose that $(\{k\}, T) \in \mathcal{J}$ is linked to some $(S, T') \in \mathcal{J}$ with $|S| = 1$. In this case, we bound the contribution from such $\mathcal{J}$ by Lemma~\ref{lemma:LS}. 

Thirdly, if some $(\{k\}, T) \in \mathcal{J}$ is linked to some $(S, T') \in \mathcal{J}$ with $|S| \geq 2$, then we proceed as follows. For all $(\{h\}, T'')$ linked to $(\{k\}, T) \in \mathcal{J}$, we must have $(\{h\}, T'') \not \in \mathcal{J}$ by the previous case, and we can bound the contribution of the range $(\log B)^A < w_{\{h\}, T''} \leq Q'$ to $\mathcal{J}$ by another appeal to Lemma~\ref{lemma:LS}, where we take $A$ to be a large real number in terms of $n$ only. In the remaining range, we can apply Lemma~\ref{lemma:SW}. 

Therefore, for the remaining $\mathcal{J}$, all $(\{k\}, T) \in \mathcal{J}$ and $(S, T') \in \mathcal{J}$ are unlinked. Hence $\mathcal{J}$ is a blocking set, as desired.
\end{proof}
 
Now, in light of Lemma~\ref{lemma:notblocking}, assume that $\mathcal{J} = \mathcal{B}$ is a blocking set. By Lemma~\ref{lBlock}, any element $(\{k'\}, T') \notin \mathcal{B}$ is linked to some $(\{k\}, T) \in \mathcal{B}$. Therefore we have the estimate
\[
\sum_{\substack{(w_{S, T})_{|S| \geq 2} \\ 1<w_{S, T} \leq (\log B)^{100} \text{ if } (S, T) \in \mathcal{B} \\ w_{S, T} = 1 \text{ if } (S, T)\notin\mathcal{B}}} \sum_{\substack{(w_{\{k\}, T})_k \\ w_{\{k\}, T} > Q' \text{ if } (\{k\}, T)\in\mathcal{B}\\ 1 < w_{\{k\}, T} \leq Q' \text{ if } (\{k\}, T) \notin \mathcal{B}}}^{\flat} F((w_{S, T})_{S, T}) \ll B^n (\log B)^{-\delta'}
\]
for some $\delta' > 0$ by Lemma~\ref{lemma:LS} and Lemma~\ref{lemma:SW}. Thus we may assume that $w_{\{k'\}, T'} = 1$ for all $(\{k'\}, T') \notin \mathcal{B}$. Moreover, the condition $w_{\{k\}, T} > Q'$ for $(\{k\}, T) \in \mathcal{B}$ can now be dropped at the cost of an acceptable error term. Set
\[ 
N_2(B, \mathbf{s}, \mathbf{u}, \boldsymbol{\lambda}, \mathbf{b})\coloneqq \sum_{\mathcal{B}\text{ is a blocking set}} N_1'(B, \mathbf{s}, \mathbf{u}, \boldsymbol{\lambda}, \mathbf{b}, \mathcal{B}),
\]
where $N_1'(B, \mathbf{s}, \mathbf{u}, \boldsymbol{\lambda}, \mathbf{b}, \mathcal{B})$ imposes the additional condition $w_{\{k'\}, T'} = 1$ for all $(\{k'\}, T') \notin \mathcal{B}$ but does not impose that $w_{\{k\}, T} > Q'$ for $(\{k\}, T) \in \mathcal{B}$. Lemma~\ref{lBlock} allows us to associate a unique $f \in \RAlt$ to each $\mathcal{B}$. Given each $f \in \RAlt$, we identify $w_{S, f(S)}$ with a new variable $w_S$ and collect all blocking sets $\mathcal{B}$ allowed by~\eqref{eq:blocking}. Then the sum becomes
\begin{multline}
\label{eSumNow}
N_2(B, \mathbf{s}, \mathbf{u}, \boldsymbol{\lambda}, \mathbf{b}) =
\sum_{f \in \RAlt} \sum_{\substack{(w_S)_{S} \\ w_S = 1\text{ if }f(S)\notin W_S}}^{\flat}
\mu^2\left(2 \cdot \prod_{S} w_{S}\right) \times \\
\times\prod_{S} |W_S|^{-\omega(w_{S})} \times \prod_{S} \left(\frac{\prod_{j \in f(S)} s_j2^{\lambda_j} }{w_{S}}\right) \times \prod_{S_1} \prod_{S_2} \left(\frac{w_{S_2}}{w_{S_1}}\right)^{|S_2 \cap f(S_1)|},
\end{multline}
where the new summation conditions imposed by $\flat$ are
\begin{align*}
&w_S \text{ odd}, \hspace{2em} w_S \geq 1, &&S \in \mathcal{D} \text{ and } p \mid w_S \Rightarrow p \equiv 1 \bmod 4, \\ 
&w_{\{i\}} \leq \frac{B}{2^{\lambda_i} \cdot b_i^2 \cdot \prod_{S\supset \{i\}} w_S}, &&\gcd(\{w_S\} \cup \{b_i : i \notin S\}) = 1, \\ 
&\prod_{\substack{\varnothing \subset S \subset [n] \\ i \in S}} w_S \equiv s_iu_i \bmod 8, &&w_S \leq (\log B)^{100} \text{ for } |S| \geq 2.
\end{align*}

\subsection{Simplifying the sum}
At this stage, our main objective is to simplify the above sum~\eqref{eSumNow} as much as possible. By construction of $f$, we see that the Legendre symbol $(w_{S_2}/w_{S_1})$ appears if and only if $(w_{S_1}/w_{S_2})$ appears in the above sum. This will eventually lead to a rather explicit description of the sum, for each fixed $f$, after several combinatorial manipulations and applications of quadratic reciprocity. To start, we recall that $s_{\{-\}} = -1$ by convention and we observe that
\begin{align}
\label{eSimplifyLegendre1}
&\prod_{\varnothing \subset S \subset [n]} \left(\frac{\prod_{j \in f(S)} s_j2^{\lambda_j} }{w_{S}}\right) \times \prod_{\varnothing \subset S_1 \subset [n]} \prod_{\varnothing \subset S_2 \subset [n]} \left(\frac{w_{S_2}}{w_{S_1}}\right)^{|S_2 \cap f(S_1)|}\nonumber \\
&= \prod_{\varnothing \subset S \subset [n]} (-1,w_{S})_2^{|\{-\}\cap f(S)|} \times \prod_{\varnothing \subset S \subset [n]} \prod_{j \in [n]} \left(\left(\frac{s_j2^{\lambda_j} }{w_S}\right)^{|\{j\} \cap f(S)|} \times \prod_{\substack{S_2\neq S\\j\in S_2}} \left(\frac{w_{S_2}}{w_S}\right)^{|\{j\} \cap f(S)|}\right)\nonumber \\
&= \prod_{\varnothing \subset S \subset [n]} (-1,w_{S})_2^{|\{-\}\cap f(S)|} \times \prod_{\varnothing \subset S \subset [n]} \prod_{j \in [n]} \left(\frac{x_j/w_S^{|\{j\}\cap S|}}{w_S}\right)^{|\{j\} \cap f(S)|},
\end{align}
where we have defined $x_j \coloneqq s_j2^{\lambda_j} \prod_{\substack{\varnothing \subset S_2 \subset [n] \\ j \in S_2}} w_{S_2}$. We continue to manipulate the last product in~\eqref{eSimplifyLegendre1}
\begin{multline*}
\prod_{\varnothing \subset S \subset [n]} \prod_{j \in [n]} \left(\frac{x_j/w_S^{|\{j\}\cap S|}}{w_S}\right)^{|\{j\} \cap f(S)|} = \prod_{i \in [n]} \prod_{\substack{j \in [n] \\ j \neq i}} \prod_{\substack{\varnothing \subset S \subset [n] \\ i \in S}}
\left(\frac{x_j/w_S^{|\{j\} \cap S|}}{w_S}\right)^{|\{j\} \cap f(\{i\})|} \\
=
\prod_{\{i, j\} \subseteq [n]}
\left(\prod_{\substack{\varnothing \subset S \subset [n] \\ i \in S, \ j\notin S}}
\left(\frac{x_j}{w_S}\right)
\cdot \prod_{\substack{\varnothing \subset S \subset [n] \\ j \in S, \ i\notin S}}
\left(\frac{x_i}{w_S}\right)
\cdot \prod_{\substack{\varnothing \subset S \subset [n] \\ \{i, j\} \subseteq S}}
\left(\frac{x_ix_j/w_S^2}{w_S}\right)
\right)^{|\{j\} \cap f(\{i\})|},
\end{multline*}
where the last identity comes from grouping the terms $(i, j)$ with $(j, i)$ (note that the $i = j$ terms vanish because $\{i\} \cap f(\{i\}) = \varnothing$) and from considering the three cases for $S \cap \{i, j\} \neq \varnothing$. We apply Hilbert reciprocity to $(x_i, x_j)$ to rewrite this as
\begin{align*}
&=
\prod_{\{i, j\} \subseteq [n]}
\left((x_i, x_j)_2 (x_i, x_j)_{\infty}
\cdot \prod_{\substack{S\\ \{i, j\}\subseteq S}}\left(\frac{-x_i x_j/\gcd(x_i, x_j)^2}{w_S}\right)
\left(\frac{x_i x_j/w_S^2}{w_S}\right)
\right)^{|\{j\} \cap f(\{i\})|} \\
&= \prod_{\{i, j\} \subseteq [n]} (\tw_i, \tw_j)_2^{|\{j\} \cap f(\{i\})|} (s_i, s_j)_2^{|\{j\} \cap f(\{i\})|}
\cdot \prod_{|S| \geq 2} \prod_{\{i, j\} \subseteq S} (-1,w_S)_2^{|\{j\} \cap f(\{i\})|}.
\end{align*}
Noting that 
$$
\tw_i s_i \equiv x_i s_i \equiv 2^{\lambda_i} \prod_{\substack{\varnothing \subset S \subset [n] \\ i \in S}} w_S \bmod 8, 
$$ 
and $(-1, 2)_2 = +1$, the first term in~\eqref{eSimplifyLegendre1} equals
\begin{align*}
\prod_{\varnothing \subset S \subset [n]} \hspace{-0.1cm} (-1,w_S)_2^{|\{-\} \cap f(S)|} &= \prod_i (-1, \tw_i s_i \prod_{\substack{|S| \geq 2 \\ i \in S}} w_S)_2^{|\{-\} \cap f(\{i\})|} \cdot \prod_{|S| \geq 2} (-1, w_S)_2^{|\{-\}\cap f(S)|} \\
&= \prod_i (-1, \tw_is_i)_2^{|\{-\} \cap f(\{i\})|} \cdot \prod_{|S| \geq 2} (-1, w_S)_2^{|\{-\}\cap f(S)| + \sum_{i \in S} |\{-\} \cap f(\{i\})|}.
\end{align*}
Putting equation~\eqref{eSimplifyLegendre1} and the above two expressions back into the sum~\eqref{eSumNow}, we get the end result of this section. We formally state it now as this is the key intermediate result we shall build on in Section~\ref{sEuler}.

\begin{theorem}
\label{tIntermediate}
We have
\begin{multline}
\label{eFinalSum}
N_2(B, \mathbf{s}, \mathbf{u}, \boldsymbol{\lambda}, \mathbf{b}) = \sum_{f \in \RAlt} \prod_i (-1, \tw_is_i)_2^{|\{-\} \cap f(\{i\})|} \prod_{\{i, j\} \subseteq [n]} \left((\tw_i, \tw_j)_2 (s_i, s_j)_2\right)^{|\{j\} \cap f(\{i\})|} \\
\sum_{\substack{(w_S)_S \\ w_S = 1 \text{ if } f(S) \notin W_S}}^\flat \hspace{-0.5cm} \frac{\mu^2\left(2 \cdot \prod_S w_S\right)}{\prod_S |W_S|^{\omega(w_S)}} \prod_{\substack{S \\ |S| \geq 2}} (-1, w_S)_2^{|\{-\}\cap f(S)| + \sum_{i \in S} |\{-\} \cap f(\{i\})| + \sum_{\{i, j\} \subseteq S} |\{j\} \cap f(\{i\})|}.
\end{multline}
\end{theorem}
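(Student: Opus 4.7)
My plan is to proceed from the expression \eqref{eSumNow}, successively simplifying the product of Legendre symbols using quadratic and Hilbert reciprocity, and to push all remaining $w_S$-dependence (beyond the divisor weights already present) into a single product of $(-1,w_S)_2$ factors indexed by $|S|\geq 2$, while isolating the $w_S$-independent data into the Hilbert symbols $(\tw_i,\tw_j)_2$, $(s_i,s_j)_2$ and $(-1,\tw_is_i)_2$.

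The first step is to repackage the double product $\prod_{S_1,S_2}(w_{S_2}/w_{S_1})^{|S_2 \cap f(S_1)|}$ by introducing, for each $j \in [n]$, the auxiliary variable $x_j := s_j 2^{\lambda_j}\prod_{j \in S_2} w_{S_2}$. Together with the numerator factors $(\prod_{j \in f(S)} s_j 2^{\lambda_j}/w_S)$, these assemble into products of the form $(x_j/w_S^{|\{j\}\cap S|}/w_S)^{|\{j\}\cap f(S)|}$ over $S$ and $j$. Using $f \in \RAlt$ and in particular the alternating identity $|\{i\}\cap f(\{j\})| = |\{j\}\cap f(\{i\})|$, I would fold the resulting product over ordered pairs of singletons into a product over unordered pairs $\{i,j\}\subseteq [n]$, split into three subcases according to whether $S\cap\{i,j\}$ equals $\{i\}$, $\{j\}$, or $\{i,j\}$.

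The second step is to apply Hilbert reciprocity $\prod_v (x_i,x_j)_v = 1$ for each $\{i,j\}$. The odd finite part of this global relation is precisely the product of Legendre symbols produced in the previous step (together with a residual $(-1,w_S)_2$ when $\{i,j\}\subseteq S$, coming from the factor $-x_ix_j/\gcd(x_i,x_j)^2$). Replacing those odd factors by $(x_i,x_j)_2(x_i,x_j)_\infty$ and invoking $\tw_is_i \equiv 2^{\lambda_i}\prod_{i \in S}w_S \bmod 8$ identifies $(x_i,x_j)_2 = (\tw_i,\tw_j)_2$; the archimedean factor becomes $(s_i,s_j)_2$ since each $x_k$ has the sign of $s_k$. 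This yields the two $w_S$-free products in the statement.

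The same congruence, combined with $(-1,2)_2 = +1$, lets me treat the $\{-\}$-part $\prod_{S}(-1,w_S)_2^{|\{-\}\cap f(S)|}$: singletons $S = \{i\}$ produce the factor $\prod_i (-1,\tw_is_i)_2^{|\{-\}\cap f(\{i\})|}$, and the remaining contribution, together with the residual $(-1,w_S)_2$ terms from the reciprocity step, accumulates to the exponent $|\{-\}\cap f(S)| + \sum_{i \in S}|\{-\}\cap f(\{i\})| + \sum_{\{i,j\}\subseteq S}|\{j\}\cap f(\{i\})|$ for each $|S|\geq 2$, matching \eqref{eFinalSum}. The only real difficulty here is bookkeeping modulo $2$: verifying that the three pair-splitting cases recombine correctly under Hilbert reciprocity and that the $|S|\geq 2$ exponent is exactly the claimed sum; beyond that the derivation is a mechanical deployment of the two reciprocity laws.
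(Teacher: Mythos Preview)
Your proposal is correct and follows essentially the same route as the paper: introduce $x_j = s_j 2^{\lambda_j}\prod_{j\in S_2}w_{S_2}$, fold the ordered product into unordered pairs via the alternating property with the three-case split on $S\cap\{i,j\}$, apply Hilbert reciprocity to $(x_i,x_j)$ to trade the odd-prime Legendre symbols for $(x_i,x_j)_2(x_i,x_j)_\infty$ plus the residual $(-1,w_S)_2$ from the $\{i,j\}\subseteq S$ case, and then use the congruence $\tw_is_i\equiv 2^{\lambda_i}\prod_{i\in S}w_S\bmod 8$ together with $(-1,2)_2=+1$ to extract the $(-1,\tw_is_i)_2$ factors and assemble the exponent on $(-1,w_S)_2$ for $|S|\geq 2$. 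The only point you leave implicit that the paper spells out is why the archimedean symbol $(x_i,x_j)_\infty$ may be written as $(s_i,s_j)_2$, but for sign values this identity is immediate.
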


The major contribution to this sum will come from the singletons $S = \{i\}$ as for every set of greater size the variable $w_S$ appears in at least two height constraints and thus the sum over these variables will converge absolutely. We will asymptotically evaluate equation~\eqref{eFinalSum} resulting in a finite sum of Euler products, a feat that we shall accomplish in Section~\ref{sEuler}. In fact, we get $|\Jay|$ Euler products for each $f \in \RAlt$. Afterwards, we shall see how the exponent of $(-1, w_S)_2$ is related to the obstruction coming from the subordinate Brauer group. This will be done in Lemma~\ref{lemma:HScond1}. We will work out the conjectured leading constant from~\cite[Conjecture~3.16]{LRS} in Section~\ref{sLeading}.
 
\section{The Euler product}
\label{sEuler}
The goal of this section is to evaluate the sum from Theorem \ref{tIntermediate}. It turns out that each $f$ will give rise to $|\Jay|$ main terms, each with potentially distinct Euler products. 

\subsection{Analytic lemmata}
In this subsection we state some analytic preparations that will be useful in calculating the Euler product.

\begin{lemma}
\label{lem:LSD}
For $c \geq 1$, $\alpha \in \{1, 3,5,7\}$ and $d$ even, we have uniformly for $X \geq 2$ and $d \leq X$
\[
\sum_{\substack{n \leq X \\ \gcd(n, d)=1 \\ n \equiv \alpha \bmod 8}} \frac{1}{c^{\,\omega(n)}} = \beta_c \prod_{\substack{p \mid d \\ p>2}} \left( 1 + \frac{1}{c(p - 1)} \right)^{-1} \frac{X}{(\log X)^{1 - \frac{1}{c}}} + O \left( \frac{X}{(\log X)^{2 - \frac{1}{c}}} \right),
\]
where $\beta_c = \frac{1}{2^{2+\frac{1}{c}}}\frac{1}{\Gamma(1/c)} \prod_{p>2} \left( 1 + \frac{1}{c(p-1)} \right) \left( 1 - \frac{1}{p} \right)^{\frac{1}{c}}$.
\end{lemma}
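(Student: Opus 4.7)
The plan is to apply the Selberg--Delange theorem after detecting the congruence $n \equiv \alpha \bmod 8$ via Dirichlet characters. The key starting observation is that $d$ being even already forces $n$ odd, so characters modulo $8$ cleanly resolve the residue class: by orthogonality
\[
\sum_{\substack{n \leq X \\ \gcd(n,d)=1 \\ n \equiv \alpha \bmod 8}} \frac{1}{c^{\omega(n)}} = \frac{1}{4} \sum_{\chi \bmod 8} \overline{\chi(\alpha)}\, S_\chi(X,d), \qquad S_\chi(X,d) \coloneqq \sum_{\substack{n \leq X \\ \gcd(n,d)=1}} \frac{\chi(n)}{c^{\omega(n)}}.
\]
I would then extract the main term from the principal character $\chi_0$ and show the non-principal characters contribute only to the error.

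For $\chi_0$, the associated Dirichlet series is
\[
D_{\chi_0}(s) = \prod_{\substack{p > 2 \\ p \nmid d}} \left(1 + \frac{1}{c(p^s-1)}\right) = \zeta(s)^{1/c} K_d(s),
\]
where $K_d(s)$ is holomorphic and non-vanishing in a neighborhood of $\Re(s) \geq 1$, since each local factor times $(1-p^{-s})^{1/c}$ equals $1 + O(p^{-2\Re(s)})$. The Selberg--Delange theorem (e.g.~Tenenbaum, Ch.~II.5) then delivers
\[
S_{\chi_0}(X,d) = \frac{K_d(1)}{\Gamma(1/c)} \frac{X}{(\log X)^{1-1/c}} + O\!\left(\frac{X}{(\log X)^{2-1/c}}\right).
\]
A routine Euler-product manipulation---extending the product over $p \nmid 2d$ to all $p > 2$, reinserting inverse factors at $p \mid d$, and incorporating the missing $p=2$ factor $(1 - 1/2)^{1/c} = 2^{-1/c}$---yields
\[
K_d(1) = 2^{-1/c} \prod_{p > 2} \left(1 + \tfrac{1}{c(p-1)}\right)\!\left(1 - \tfrac{1}{p}\right)^{1/c} \prod_{\substack{p \mid d \\ p > 2}} \left(1 + \tfrac{1}{c(p-1)}\right)^{-1}.
\]
The outer $\tfrac{1}{4}$ combines with $2^{-1/c}$ to give $2^{-(2+1/c)}$, exactly reproducing $\beta_c$ and the claimed $d$-dependent correction.

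For each non-principal $\chi \bmod 8$, the series $\prod_{p > 2, p \nmid d}(1 + \chi(p)/(c(p^s-1))) = L(s,\chi)^{1/c} H_\chi(s)$ is \emph{holomorphic} at $s=1$ since $L(\cdot,\chi)$ is entire and $L(1,\chi) \neq 0$. I would bound $S_\chi(X,d)$ by invoking Theorem~\ref{tKou} with $k=1$ and $J$ arbitrarily large: Theorem~\ref{tSW} controls $\sum_{p \leq Z}\chi(p)\log p$ to any power of $\log Z$, and discarding primes dividing $d$ costs at most $\log d \leq \log X$, so hypothesis~\eqref{eSWAssumption1} holds uniformly in $d$. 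This yields $S_\chi(X,d) \ll X/(\log X)^A$ for any $A$, well within the stated error. The principal obstacle throughout is uniformity in $d$ (which can be as large as $X$): one needs $K_d(s)$ and $H_\chi(s)$, together with derivatives, to be bounded independently of $d$ on a fixed neighborhood of $s=1$. This holds because every omitted Euler factor at $p \mid d$ is $1 + O(p^{-2})$ and $\sum_{p \mid d} p^{-2}$ is absolutely bounded.
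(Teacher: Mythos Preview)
Your approach is essentially the one the paper has in mind: the paper's entire proof is the single line ``This follows from \cite[Theorem~13.2]{Kou}'', i.e.\ the Landau--Selberg--Delange method, and your character decomposition followed by Selberg--Delange for $\chi_0$ and Theorem~\ref{tKou} for non-principal $\chi$ is precisely an unpacking of that citation.

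There is one genuine slip in your uniformity argument. You claim that ``every omitted Euler factor at $p\mid d$ is $1+O(p^{-2})$'', but the factor by which $K_d(s)$ differs from the $d$-independent product is $\prod_{p\mid d,\,p>2}\bigl(1+\tfrac{1}{c(p^s-1)}\bigr)^{-1}$, and each such local factor is $1+O(p^{-1})$, not $1+O(p^{-2})$. Since $\sum_{p\mid d}p^{-1}$ can be as large as $\log\log d$, this product is \emph{not} bounded on a fixed neighbourhood of $s=1$ uniformly in $d\le X$; it can be of size $(\log X)^{O(1)}$. Tenenbaum's form of Selberg--Delange, which needs uniform bounds on $K_d$ and its derivatives, therefore does not directly deliver the stated $d$-uniform error. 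The fix is simply to use the $\kappa=1/c$ case of \cite[Theorem~13.2]{Kou} for the principal character as well (the same theorem whose $\kappa=0$ case is your Theorem~\ref{tKou}): its hypothesis is on $\sum_{p\le Z}f(p)\log p$, where removing the primes $p\mid d$ costs only $O(\log d)\le O(\log X)$, which is absorbed once $Z\ge Q:=(\log X)^2$. The resulting loss of a power of $\log Q\asymp\log\log X$ in the error is harmless.
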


\begin{proof}
This follows from~\cite[Theorem~13.2]{Kou}.
\end{proof}

\begin{lemma}
\label{lem:LSD2}
For $c \geq 1$, $\alpha \in \{1, 5\}$ and $d$ even, we have uniformly for $X \geq 2$ and $d \leq X$
\[
\sum_{\substack{n \leq X \\ \gcd(n, d) = 1 \\ n \equiv \alpha \bmod 8\\ p \mid n \Rightarrow p \equiv 1 \bmod 4}} \frac{1}{c^{\,\omega(n)}} = \widetilde{\beta}_c \prod_{\substack{p \mid d\\ p \equiv 1 \bmod 4}} \left( 1 + \frac{1}{c(p-1)} \right)^{-1} \frac{X}{(\log X)^{1 - \frac{1}{2c}}} + O \left( \frac{X}{(\log X)^{2 - \frac{1}{2c}}} \right),
\]
where $\widetilde{\beta}_c = \frac{1}{2^{1 + \frac{1}{2c}}} \frac{1}{\Gamma(1/2c)} \prod_{p > 2} \left( 1 + \frac{\mathbf{1}_{p \equiv 1 \bmod 4}}{c(p - 1)} \right) \left( 1 - \frac{1}{p} \right)^{\frac{1}{2c}}$.
\end{lemma}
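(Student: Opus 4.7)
The plan is to apply the Selberg--Delange method, in the form of \cite[Theorem~13.2]{Kou} (the same reference used in the proof of Lemma~\ref{lem:LSD}), to the multiplicative function
\[
f(n) \coloneqq \frac{1}{c^{\omega(n)}} \cdot \mathbf{1}_{\gcd(n,d)=1} \cdot \mathbf{1}_{p \mid n \Rightarrow p \equiv 1 \bmod 4}.
\]
Since $d$ is even and $f$ is supported on integers all of whose prime factors are $\equiv 1 \bmod 4$, the support of $f$ is automatically contained in the coset $1 \bmod 4$ of $(\Z/8\Z)^{\times}$. Therefore the residue condition $n \equiv \alpha \bmod 8$ with $\alpha \in \{1,5\}$ is detected by the single non-trivial character $\chi_8$ of the subgroup $\{1,5\} \bmod 8$ via
\[
\mathbf{1}_{n \equiv \alpha \bmod 8} = \tfrac{1}{2}\bigl(1 + \chi_8(n)\chi_8(\alpha)\bigr) \qquad (n \equiv 1 \bmod 4),
\]
which splits the target sum into a principal piece and a twisted piece.

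First, I would verify the Siegel--Walfisz-type hypothesis~\eqref{eSWAssumption1}. For the principal piece the prime number theorem in arithmetic progressions (Theorem~\ref{tSW}) gives
\[
\sum_{p \leq Z} f(p) \log p = \frac{1}{c} \sum_{\substack{p \leq Z \\ p \equiv 1 \bmod 4 \\ p \nmid d}} \log p = \frac{Z}{2c} + O_A\!\left(\frac{Z}{(\log Z)^A}\right),
\]
uniformly in $d \leq X$, matching the Selberg--Delange hypothesis with density exponent $\kappa = 1/(2c)$ (rather than $\kappa = 1/c$ as in Lemma~\ref{lem:LSD}, because we have halved the admissible set of primes). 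For the twisted piece the corresponding prime sum carries the Dirichlet character $\chi_8 \cdot \mathbf{1}_{p \equiv 1 \bmod 4}$, which is non-principal on $(\Z/8\Z)^{\times}$, so another appeal to Siegel--Walfisz shows this piece contributes only to the error term.

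Next I would compute the leading constant via the Dirichlet series
\[
F(s) \coloneqq \sum_n f(n) n^{-s} = \prod_{\substack{p \equiv 1 \bmod 4 \\ p \nmid d}} \left(1 + \frac{1}{c(p^s - 1)}\right).
\]
Using $\mathbf{1}_{p \equiv 1 \bmod 4} = (1 + \chi_{-4}(p))/2$ for odd $p$, one factors
\[
F(s) = \zeta(s)^{1/(2c)} L(s,\chi_{-4})^{1/(2c)} H_d(s)
\]
with $H_d(s)$ holomorphic and non-vanishing in a neighbourhood of $\mathrm{Re}(s) \geq 1$, uniformly in $d \leq X$. Applying the Selberg--Delange asymptotic extracts the main term of order $X/(\log X)^{1 - 1/(2c)}$ with leading coefficient $\tfrac{1}{2} \cdot \Gamma(1/(2c))^{-1} \cdot \lim_{s \to 1^+}(s-1)^{1/(2c)} F(s)$. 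A routine local calculation matching Euler factors converts this limit into $\prod_{p > 2}\bigl(1 + \mathbf{1}_{p \equiv 1 \bmod 4}/(c(p-1))\bigr)(1 - 1/p)^{1/(2c)}$, modified by the corrector $\prod_{p \mid d,\, p \equiv 1 \bmod 4}(1 + 1/(c(p-1)))^{-1}$ encoding the coprimality to $d$, and times a power of $2$ (coming from the $p=2$ Euler factor of $\zeta$ and the normalization $\tfrac{1}{2}$ from character-averaging) which one checks equals $2^{-(1 + 1/(2c))}$.

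The main obstacle is pure bookkeeping: correctly tracking the factors of $2$ and the Gamma factor $\Gamma(1/(2c))$ so that they agree with the stated expression for $\widetilde{\beta}_c$, and ensuring that the error term in Selberg--Delange is uniform in $d$ in the range $d \leq X$. Both steps proceed by the same template as Lemma~\ref{lem:LSD}; the only substantive change is the drop in density exponent from $1/c$ to $1/(2c)$, which propagates into the factor $(\log X)^{1/(2c)}$ in the main term, the factor $\Gamma(1/(2c))^{-1}$, and the exponent $1/(2c)$ on each local Euler factor $(1 - 1/p)$.
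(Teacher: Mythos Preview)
Your proposal is correct and follows exactly the same approach as the paper, which simply cites \cite[Theorem~13.2]{Kou}; you have merely unpacked the details of that application (the character detection of the residue class, the density exponent $1/(2c)$, and the Euler-product bookkeeping for $\widetilde{\beta}_c$) that the paper leaves implicit.
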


\begin{proof}
This follows from~\cite[Theorem~13.2]{Kou}.
\end{proof}

\begin{lemma}
\label{lem:sofos}
For all $k \in \Z_{\geq 1}$ and $\epsilon > 0$, we have 
\[
\sum_{\substack{n \in \Z_{\geq 1} \\ k^2 \mid n, \ n \mid k^\infty}} \frac{1}{n^{\frac{1}{2} + \epsilon}} \ll_\epsilon \frac{1}{k^{1 + \epsilon}}.
\]
\end{lemma}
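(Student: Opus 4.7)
The plan is to exploit the very rigid structure of the summation set: the condition $k^2 \mid n$ together with $n \mid k^\infty$ forces $n$ to share exactly the prime support of $k$, and a clean change of variables makes this transparent.

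First I would write $n = k^2 m$. The constraint $k^2 \mid n$ becomes $m \in \Z_{\geq 1}$, while $n \mid k^\infty$ is equivalent to $m \mid k^\infty$ (since the set of integers all of whose prime factors divide $k$ is closed under multiplication by $k^2$). This gives
\[
\sum_{\substack{n \in \Z_{\geq 1} \\ k^2 \mid n,\ n \mid k^\infty}} \frac{1}{n^{\frac{1}{2}+\epsilon}} \;=\; \frac{1}{k^{1+2\epsilon}} \sum_{m \mid k^\infty} \frac{1}{m^{\frac{1}{2}+\epsilon}}.
\]

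Next I would evaluate the inner sum as an Euler product:
\[
\sum_{m \mid k^\infty} \frac{1}{m^{\frac{1}{2}+\epsilon}} \;=\; \prod_{p \mid k} \frac{1}{1 - p^{-\frac{1}{2}-\epsilon}}.
\]
Since $1 - p^{-\frac{1}{2}-\epsilon} \geq 1 - 2^{-\frac{1}{2}}$ for every prime $p$, each local factor is bounded above by an absolute constant $C_\epsilon$, so the product is at most $C_\epsilon^{\omega(k)}$.

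Finally I would absorb this factor into $k^\epsilon$ using the standard bound $\omega(k) \ll \log k / \log\log k$ for $k \geq 3$ (with the trivial cases $k = 1, 2$ handled separately), which yields $C_\epsilon^{\omega(k)} \ll_\epsilon k^{\epsilon}$. Combining with the leading $k^{-1-2\epsilon}$ factor gives the desired bound $\ll_\epsilon k^{-1-\epsilon}$. There is no real obstacle here; the only mild subtlety is to ensure the dependence on $\epsilon$ is tracked correctly, which is why one shows the crude bound $C_\epsilon^{\omega(k)} \ll_\epsilon k^\epsilon$ rather than anything sharper.
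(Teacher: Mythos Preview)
Your argument is correct. The paper does not actually prove this lemma but simply cites \cite[Lemma~5.7]{LRS}, so your self-contained treatment via the substitution $n = k^2 m$, the Euler product over $p \mid k$, and the crude bound $C^{\omega(k)} \ll_\epsilon k^\epsilon$ is a perfectly good replacement. One cosmetic remark: the bound $1 - p^{-1/2-\epsilon} \geq 1 - 2^{-1/2}$ shows the local constant is absolute, not merely $\epsilon$-dependent, so you could write $C$ rather than $C_\epsilon$ there; the $\epsilon$-dependence enters only when you invoke $C^{\omega(k)} \ll_\epsilon k^\epsilon$.
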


\begin{proof}
This is~\cite[Lemma~5.7]{LRS}.
\end{proof}

We will now combine the various analytic tools from this subsection to prove our main counting theorem.

\begin{theorem}
\label{theorem:maincounting}
Let $1 \leq r \leq n$ be integers. Let $c_1, \ldots, c_n \geq 1$ be real numbers and $d_1, \ldots, d_n$ be even natural numbers. Let $\alpha_i$ be odd integers with $\alpha_i \equiv 1\bmod 4$ for $i = 1, \ldots, r$. Let 
\[
\gamma_i\coloneqq
\begin{cases} 
 \hfil \frac{1}{2c_i} &\textup{if } 1 \leq i \leq r, \\
 \hfil \frac{1}{c_i} &\textup{if } r + 1 \leq i \leq n
\end{cases}
\]
and $\gamma \coloneqq \sum_{i = 1}^n \gamma_i$. Let 
\[
h_i(p) \coloneqq
\begin{cases} 
 \hfil 1 &\textup{if } p \equiv 1 \bmod 4, \\
 \hfil \mathbf{1}_{r + 1 \leq i \leq n} &\textup{if } p \equiv 3 \bmod 4.
\end{cases}
\]
We multiplicatively extend $h_i$ to all odd integers. Then
\[
\sum_{\substack{\mathbf{a} \in \Z_{\geq 1}^n, \ a_i \leq X_i \\ \gcd(a_i, d_i) = 1 \\ a_i \equiv \alpha_i \bmod 8}} 
\mu^2(a_1 \cdots a_n) \prod_{i = 1}^n \frac{ h_i(a_i) }{ c_i^{\omega(a_i)} } 
=
\frac{ \mathbf{F}(d_1, \dots, d_n)C X_1 \cdots X_n}{\prod_{i=1}^n (\log X_i)^{1-\gamma_i}} \left( 1 + O \left( (\log \min_i X_i)^{-1}\right) \right),
\]
where
\[
C = 
\frac{2^{-\gamma -(2n-r)}}{\prod_{i=1}^n\Gamma(\gamma_i)}
\prod_{p>2} \left( 1+ \frac{1}{p}\sum_{i=1}^n\frac{h_i(p)}{c_i}\right)
\left( 1 - \frac{1}{p} \right)^{\gamma},
\]
and
\begin{equation}
\label{eq:Fdef}
\mathbf{F}(d_1, \dots, d_n) = 
\prod_{\substack{p \mid d_1 \cdots d_n \\ p>2}}
\left(1 + \frac{1}{p} \sum_{i=1}^n \frac{h_i(p)}{c_i}\right)^{-1}
\left( 1+
\frac{1}{p}\sum_{\substack{i \in [n] \\ p \nmid d_i}}
\frac{h_i(p)}{c_i}
\right).
\end{equation}
\end{theorem}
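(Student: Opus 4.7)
The plan is to prove the theorem by induction on $n$, reducing at each step to the one-variable asymptotic provided by Lemma~\ref{lem:LSD} or Lemma~\ref{lem:LSD2}.

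For the base case $n = 1$, one of two scenarios arises: either $r = 0$ and $\gamma_1 = 1/c_1$, in which case the result is Lemma~\ref{lem:LSD} applied with $c = c_1$, $\alpha = \alpha_1$, $d = d_1$; or $r = 1$ and $\gamma_1 = 1/(2c_1)$, in which case the result is Lemma~\ref{lem:LSD2}. A short computation verifies that the constants $\beta_c$ or $\widetilde{\beta}_c$ combined with the local Euler factor reproduce $C \cdot \mathbf{F}(d_1)$ in both cases.

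For the inductive step, I split $\mu^2(a_1 \cdots a_n) = \mu^2(a_1 \cdots a_{n-1}) \mu^2(a_n) \mathbf{1}_{\gcd(a_n,\, a_1 \cdots a_{n-1}) = 1}$, freeze the tuple $\mathbf{a}' = (a_1, \ldots, a_{n-1})$, and apply the relevant one-variable lemma to the inner sum over $a_n$ with modulus $d_n \cdot a_1 \cdots a_{n-1}$. This produces a main term of the shape
\[
\frac{X_n}{(\log X_n)^{1 - \gamma_n}} \cdot \beta \cdot \prod_{\substack{p \, \mid \, d_n \cdot a_1 \cdots a_{n-1} \\ p > 2, \, h_n(p) = 1}} \left(1 + \frac{1}{c_n(p - 1)}\right)^{\!-1} + O\!\left(\frac{X_n}{(\log X_n)^{2 - \gamma_n}}\right),
\]
where $\beta$ is $\beta_{c_n}$ or $\widetilde{\beta}_{c_n}$ depending on whether $n > r$ or $n \le r$. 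The product over primes dividing $d_n$ pulls out as a constant, while the product over primes dividing $a_1 \cdots a_{n-1}$ but not $d_n$ factors across the $a_i$ thanks to pairwise coprimality, producing a modified multiplicative weight on each $a_i$ whose density at an odd prime $p$ equals $h_i(p)/c_i + O(1/p)$. One then evaluates the outer sum over $\mathbf{a}'$ via the inductive hypothesis applied to these modified weights.

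The main obstacle is that the modified weights no longer have the precise form $h_i(\cdot)/c_i^{\omega(\cdot)}$ assumed in the statement. I would address this by strengthening the inductive hypothesis to admit arbitrary multiplicative weights with the correct first-order prime densities, in which generality the same asymptotic holds essentially by Theorem~\ref{tKou} (that is, \cite[Theorem~13.2]{Kou} used with a nontrivial density parameter, rather than the $\kappa = 0$ case stated in the excerpt). Once this is in place, verifying that the accumulated local Euler factors telescope at each odd $p$ to $\bigl(1 - 1/p\bigr)^{\gamma} \bigl(1 + \tfrac{1}{p} \sum_{i : p \nmid d_i} h_i(p)/c_i\bigr)$ is routine, and the Gamma factors $\prod_i \Gamma(\gamma_i)$ combine directly from the $\Gamma(1/c)$ appearing in each $\beta_c, \widetilde{\beta}_c$. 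Error propagation is controlled by taking a large exponent $J$ in the underlying Kouka estimate at each inductive step and using Lemma~\ref{lem:sofos} to bound tails where some $a_i$ shares unusually many prime factors with the modulus, yielding the total error of $O((\log \min_i X_i)^{-1})$ claimed in the theorem.
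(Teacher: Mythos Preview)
Your inductive approach is genuinely different from the paper's, and it has a real uniformity gap. When you freeze $(a_1,\ldots,a_{n-1})$ and sum over $a_n$, the modulus is $d_n \cdot a_1 \cdots a_{n-1}$, which for generic tuples has size comparable to $X_1 \cdots X_{n-1}$; Lemmas~\ref{lem:LSD} and~\ref{lem:LSD2} are only stated for $d \leq X$, so they do not apply. Worse, their error term $O(X_n/(\log X_n)^{2-\gamma_n})$ is \emph{independent of $d$}, whereas the main term carries the factor $\prod_{p\mid d}(1+h_n(p)/(c_n(p-1)))^{-1}$. Summing the error over the outer variables yields $\prod_{i<n} X_i \cdot X_n/(\log X_n)^{2-\gamma_n}$, which already swamps the target main term $\prod_i X_i/(\log X_i)^{1-\gamma_i}$ once $n\geq 2$ and the $X_i$ are comparable. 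Replacing $2-\gamma_n$ by an arbitrary $J$ does not help unless the error itself scales with the $d$-dependent factor in the main term, and that is a strictly stronger one-variable input than anything available here. Your appeal to Lemma~\ref{lem:sofos} is also misplaced: that lemma bounds sums over integers $n$ with $k^2\mid n\mid k^\infty$, which has no evident role in an induction where all variables stay squarefree.

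The paper avoids all of this by first opening $\mu^2(a_1\cdots a_n)=\sum_{\ell}\mu(\ell)\,\mathbf{1}_{\ell^2\mid a_1\cdots a_n}$ and, for each fixed odd $\ell$, writing $a_i=a_i'a_i''$ with $a_i'\mid\ell^\infty$ and $\gcd(a_i'',\ell)=1$. The sums over the $a_i''$ then \emph{completely decouple}: each is a one-variable sum with fixed modulus $d_i\ell$, and Lemmas~\ref{lem:LSD}/\ref{lem:LSD2} apply to all of them simultaneously without any cross-dependence. The residual sum over $(\ell,\mathbf{a}')$ is exactly where Lemma~\ref{lem:sofos} enters, to show absolute convergence and control the tail. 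This M\"obius decoupling is the missing idea in your proposal; with it there is no need for induction, no strengthened hypothesis on the weights, and no uniformity issue.
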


\begin{proof}
We remove the squarefree condition with the standard identity for $\mu^2$ to get
\[
\sum_{\substack{\mathbf{a} \in \Z_{\geq 1}^n, \ a_i \leq X_i \\ \gcd(a_i, d_i) = 1 \\ a_i \equiv \alpha_i \bmod 8}} 
\mu^2(a_1 \cdots a_n) \prod_{i = 1}^n \frac{ h_i(a_i) }{ c_i^{\omega(a_i)} } 
=
\sum_{\ell \text{ odd}} \mu(\ell) 
\sum_{\substack{\mathbf{a} \in \Z_{\geq 1}^n, \ a_i \leq X_i \\ \gcd(a_i, d_i) = 1 \\ a_i \equiv \alpha_i \bmod 8 \\ \ell^2 \mid a_1 \cdots a_n}} 
\prod_{i=1}^n\frac{ h_i(a_i) }{ c_i^{\omega(a_i)} } .
\]
We write $a_i = a_i'a_i''$ such that $a_i' \mid \ell^\infty$ and $\gcd(a_i'', \ell) = 1$, so 
\begin{equation}
\label{eq:asum}
\sum_{\substack{\mathbf{a} \in \Z_{\geq 1}^n, \ a_i \leq X_i \\ \gcd(a_i, d_i) = 1 \\ a_i \equiv \alpha_i \bmod 8 \\ \ell^2 \mid a_1 \cdots a_n}} 
\prod_{i = 1}^n \frac{ h_i(a_i) }{ c_i^{\omega(a_i)} }
=
\sum_{\substack{\mathbf{a}' \in \Z_{\geq 1}^n, \ a_i' \leq X_i \\ \gcd(a_i', d_i) = 1 \\ a_i' \mid \ell^\infty, \ \ell^2 \mid a_1' \cdots a_n'}} 
\prod_{i=1}^n\frac{ h_i(a_i') }{ c_i^{\omega(a_i')} }
\sum_{\substack{\mathbf{a}'' \in \Z_{\geq 1}^n \\ a_i'' \leq X_i/a_i' \\ \gcd(a_i'', d_i\ell) = 1 \\ a_i'a_i'' \equiv \alpha_i \bmod 8}} 
\prod_{i = 1}^n\frac{ h_i(a_i'') }{ c_i^{\omega(a_i'')} }.
\end{equation}
The sum over $\mathbf{a}'$ in~\eqref{eq:asum} is absolutely convergent and we may extend it to infinity. The error in doing so for the variable $a_{i_0}$ is bounded by
\[
X_1 \cdots X_n
\sum_\ell \sum_{\substack{\mathbf{a'} \in \Z_{\geq 1}^n, \ a_{i_0}' \geq X_{i_0}\\ \ell^2 \mid a_1' \cdots a_n', \ a_i' \mid \ell^{\infty}}} 
\frac{ 1 }{ c_1^{\omega(a_1')} \cdots c_n^{\omega(a_n')} a_1' \cdots a_n'}
\ll
X_1 \cdots X_n
\sum_\ell \sum_{\substack{m \geq X_{i_0} \\ \ell^2 \mid m, \ m \mid \ell^\infty}} \frac{\tau_n(m)}{m},
\]
where we used our hypothesis $c_1, \dots, c_n \geq 1$. Applying Rankin's trick, we may bound this by
\[
X_1 \cdots X_n X_{i_0}^{-\frac{1}{3}}
\sum_\ell \sum_{\substack{m \geq X_{i_0} \\ \ell^2 \mid m, \ m \mid \ell^\infty}} \frac{\tau_n(m)}{m^{\frac{2}{3}}}.
\]
By the crude divisor bound $\tau_n(m) \ll m^{\frac{1}{12}}$ and Lemma~\ref{lem:sofos}, we see that the remaining $\ell$ sum is absolutely convergent. 

The $a_i''$ sums in~\eqref{eq:asum} are now all separate so we may apply Lemmas~\ref{lem:LSD} and~\ref{lem:LSD2} as appropriate to get
\[
\prod_{i = 1}^r \widetilde{\beta}_{c_i} \prod_{i=r+1}^n \beta_{c_i}
\prod_{i = 1}^n \frac{ X_i }{(\log X_i)^{1-\gamma_i} }\times \prod_{p>2} F_p(\ell) \times G(\ell)
\times \left( 1 + O \left( (\log \min_i X_i)^{-1}\right) \right),
\]
where
\begin{align*}
F_p(\ell) \coloneqq \prod_{\substack{i\in[n]\\p\mid \ell d_i}}\left( 1 + \frac{h_i(p)}{c_i(p-1)} \right)^{-1}&&\text{and}&&
G(\ell) \coloneqq 
\sum_{\substack{\mathbf{a}' \in \Z_{\geq 1}^n \\ \gcd(a_i', d_i) = 1 \\ \ell^2 \mid a_1' \cdots a_n', \ a_i' \mid \ell^\infty}} 
\prod_{i=1}^n \frac{ h_i(a_i') }{ c_i^{\omega(a_i')}a_i' }.
\end{align*}
Note that above we have made use of the fact that $\log(X/a_i') = \log X \left( 1 + O \left( \frac{\log a_i'}{\log X} \right) \right)$ to replace the log terms by $\log X$ at the cost of the error
\[
\frac{X_1 \cdots X_n}{\min_i \log X_i \prod_{i = 1}^n (\log X_i)^{1-\gamma_i}}
\sum_\ell \sum_{\ell^2 \mid m, \ m \mid \ell^\infty} \frac{\tau_n(m) \log m}{m}.
\]
The $\ell$ sum converges by the divisor bound $\tau_n(m) \log m \ll m^{\frac{1}{4}}$ and Lemma~\ref{lem:sofos} with $\epsilon = 1/4$. For a prime $p > 2$ we have
\[
G(p) = \sum_{k \geq 2} \frac{1}{p^k}
\sum_{\substack{ j_1 + \cdots + j_n = k \\ p\mid d_i\Rightarrow j_i=0}}
\prod_{\substack{ i \in [n] \\ j_i \neq 0}} \frac{h_i(p)}{c_i}.
\]
Hence we obtain
\[
\sum_{\ell \text{ odd}} \mu(\ell) G(\ell) \prod_{p>2} F_p(\ell)
= \prod_{p>2}F_p(p)\left(F_p(p)^{-1}F_p(1)-
\sum_{k\geq 2}\frac{1}{p^k}
\sum_{\substack{ j_1 + \cdots + j_n = k \\ p\mid d_i\Rightarrow j_i=0}}
\prod_{\substack{ i \in [n] \\ j_i \neq 0}} \frac{h_i(p)}{c_i}
\right).
\]
We can expand the term $F_p(p)^{-1}F_p(1)$ as
\[
\prod_{\substack{i\in[n] \\ p\nmid d_i}} \left( 1 + \frac{h_i(p)}{c_i(p-1)} \right) = \prod_{\substack{i \in [n] \\ p\nmid d_i}} \left( 1 + \frac{h_i(p)}{c_i} \sum_{k \geq 1} \frac{1}{p^k}\right) = \sum_{k \geq 0} \frac{1}{p^k}
\sum_{\substack{ j_1 + \cdots + j_n = k \\ p\mid d_i \Rightarrow j_i = 0}}
\prod_{\substack{ i \in [n] \\ j_i \neq 0}} \frac{h_i(p)}{c_i}.
\]
Therefore we can rewrite the previous expression as
\[
\sum_{\ell \text{ odd}} \mu(\ell) G(\ell) \prod_{p>2}F_p(\ell) = \prod_{p>2} \prod_{i\in[n]}\left( 1 + \frac{h_i(p)}{c_i(p-1)} \right)^{-1}\left(1+ \frac{1}{p} \sum_{\substack{i\in [n]\\ p\nmid d_i}}\frac{h_i(p)}{c_i} \right) .
\]
Noting that 
\[
\prod_{i = 1}^r \widetilde{\beta}_{c_i} \prod_{i = r + 1}^n \beta_{c_i} = \frac{1}{\prod_{i = 1}^n\Gamma(\gamma_i)} \times
\frac{1}{2^{r + 2(n - r) + \gamma}}
\prod_{p>2} \prod_{i = 1}^n \left( 1 + \frac{h_i(p)}{c_i(p-1)} \right) \left( 1 - \frac{1}{p}\right)^{\gamma},
\]
we conclude that
\begin{multline*}
\sum_{\substack{\mathbf{a} \in \Z_{\geq 1}^n, \ a_i \leq X_i \\ \gcd(a_i, d_i) = 1 \\ a_i \equiv \alpha_i \bmod 8}} 
\mu^2(a_1 \cdots a_n)\prod_{i=1}^n\frac{ h_i(a_i) }{ c_i^{\omega(a_i)} } 
= \frac{2^{-\gamma-(2n-r)}}{\prod_{i=1}^n\Gamma(\gamma_i)}\times
\prod_{p>2} \left(1+
\frac{1}{p} \sum_{\substack{i\in [n]\\ p\nmid d_i}} \frac{h_i(p)}{c_i} \right)
\left( 1 - \frac{1}{p}\right)^{\gamma}\times\\ \times
\prod_{i = 1}^n \frac{ X_i }{(\log X_i)^{1-\gamma_i} }\times \left( 1 + O \left( (\log \min_i X_i)^{-1}\right) \right)
\end{multline*}
as required.
\end{proof}

\subsection{The final computation}
We will now put all the ingredients together to rewrite the sum~\eqref{eFinalSum} from Theorem \ref{tIntermediate} as a finite sum of main terms. We start by introducing some notation so that we will be able to write the main term from Theorem~\ref{theorem:maincounting} succinctly. We introduce for all sets $\varnothing \subseteq S \subset [n]$, all elements $f$ of $\RAlt$, and all $J\subseteq[n]$
\begin{align*}
\Br(f, S, a) &\coloneqq (-1, a)_2^{|\{-\} \cap f(S)| + \sum_{i \in S} |\{-\} \cap f(\{i\})| + \sum_{\{i, j\} \subseteq S} |\{j\} \cap f(\{i\})|},\\
\BrM(f, J, S, a) &\coloneqq 
\Br(f, S, a) \times (-1, a)_2^{|J \cap S|},
\end{align*}
which are both multiplicative in $a$ and take values in $\pm 1$. We introduce the quantities
\begin{equation}
\label{eq:Tdef}
\Tw(f, \mathbf{a}) \coloneqq
\prod_i (-1, a_i)_2^{|\{-\} \cap f(\{i\})|} \times \prod_{\{i, j\} \subseteq [n]} (a_i, a_j)_2^{|\{j\} \cap f(\{i\})|}
\end{equation}
and
\begin{equation}
\label{eq:hdef}
h_S(p) \coloneqq
\begin{cases}
 \hfil 1& \text{if } p \equiv 1 \bmod 4, \\ 
 \hfil \mathbf{1}_{S \notin \mathcal{D}} &\text{if } p \equiv 3 \bmod 4.
\end{cases} 
\end{equation}
For $i \in [n]$, we will use the abbreviation $h_i(p) \coloneqq h_{\{i\}}(p)$. Set $c_S \coloneqq |W_S|$. By convention, set $c_{\varnothing}\coloneqq 1$ and $h_{\varnothing}(p) = 1$. With these notations, the expression in~\eqref{eFinalSum} can be rewritten as
\begin{multline}
\label{eFinalSum2}
N_2(B, \mathbf{s}, \mathbf{u}, \boldsymbol{\lambda}, \mathbf{b}) = \sum_{f \in \RAlt} \Tw(f,\mathbf{tw})\Tw(f,\mathbf{s}) \times \\
\sum_{\substack{(w_{S})_{|S| \geq 2} \\ w_S = 1 \text{ if } f(S)\notin W_S}}^\flat \frac{\mu^2\left(2 \cdot \prod_{|S| \geq 2} w_{S}\right) \prod_{|S| \geq 2} \Br(f, S, w_S)}{\prod_{|S| \geq 2} c_S^{\omega(w_S)}} 
\times \sum_{(w_i)_{i \in [n]}}^\flat \frac{\mu^2\left(2 \cdot \prod_{i \in [n]} w_i\right)}{\prod_{i \in [n]} c_i^{\omega(w_{i})}}.
\end{multline}
We also introduce the shorthands
\begin{align}
\label{eSigmaDef}
\Sigma(p) \coloneqq
\left(1 + \frac{1}{p} \sum_{j = 1}^n \frac{h_{j}(p)}{c_j}\right)^{-1}&&\text{and}&&\Sigma_m(p) \coloneqq \Sigma(p) 
\left(1 + \frac{1}{p} \sum_{\substack{j\in[n]\\j\neq m}} \frac{h_{j}(p)}{c_j}\right). 
\end{align}
We apply Theorem~\ref{theorem:maincounting} with $\alpha_i = s_iu_i \prod_{\substack{|S| \geq 2 \\ i\in S}} w_S$ to evaluate the inner sum over $(w_i)_{i \in [n]}$ in equation~\eqref{eFinalSum2} as
\begin{multline*}
\frac{\mathbf{F}(d_1, \dots, d_n) B^n (\log B)^{-n + \sum_{i}|V_{\{i\}}|^{-1}}}{\prod_i \Gamma(|V_{\{i\}}|^{-1}) \prod_{|S| \geq 2} w_S^{|S|} \prod_{i = 1}^n b_i^2} \times \frac{1}{2^{2n - |\{i \in [n] : \{i\} \in \mathcal{D}\}| + \sum_i \left(\lambda_i + |V_{\{i\}}|^{-1}\right)}} \times \\
\prod_{p>2}\frac{\left( 1- p^{-1}\right)^{\sum_i|V_{\{i\}}|^{-1}}}{\Sigma(p)} 
\times \prod_{\{i\} \in \mathcal{D}} \frac{1}{2}\left(1+\leg{-1}{s_iu_i\prod_{\substack{|S|\geq 2\\ i\in S}}w_S}\right) \times \left(1 + O\left(\frac{1}{\log B}\right)\right),
\end{multline*} 
where $\mathbf{F}$ is defined in~\eqref{eq:Fdef} and where $d_j = \prod_{|S| \geq 2} w_S \cdot \gcd(\{b_i:i\in[n]-\{j\}\})$. Here the product over $\{i\} \in \mathcal{D}$ detects that $\alpha_i \equiv 1 \bmod 4$ (observe that the sum in Theorem~\ref{theorem:maincounting} is empty if $\alpha_i \not \equiv 1 \bmod 4$ for some $\{i\} \in \mathcal{D}$). Next we expand the product of quadratic residue symbols as
\[
\prod_{\{i\} \in \mathcal{D}} \frac{1}{2} \left(1 + \leg{-1}{s_iu_i\prod_{\substack{|S|\geq 2\\ i\in S}}w_S}\right) = \frac{1}{2^{|\{i \in [n] : \{i\} \in \mathcal{D}\}|}} \hspace{-0.24cm} \sum_{J \subseteq \{i \in [n] : \{i\} \in \mathcal{D}\}} \leg{-1}{\prod_{i\in J} s_iu_i\prod_{|S|\geq 2 }w_S^{|S\cap J|}}. 
\]
We define a new function
$$
g(w, (b_i)_i) = \mathbf{F}(d_1, \dots, d_n) \mu^2\left(2 w\right)
$$ 
with $d_j \coloneqq w \cdot \gcd(\{b_i:i\in[n]-\{j\}\})$ for $j \in [n]$. Then we obtain the asymptotic expansion
\begin{multline}
\label{eFinalSum3}
N_2(B, \mathbf{s}, \mathbf{u}, \boldsymbol{\lambda}, \mathbf{b})
=
\frac{B^n (\log B)^{-n + \sum_i |V_{\{i\}}|^{-1}} \times \left( 1 + O \left( (\log B)^{-1}\right) \right)}{2^{2n + \sum_i \left(\lambda_i + |V_{\{i\}}|^{-1}\right)} \times \prod_i \Gamma(|V_{\{i\}}|^{-1})} \prod_{p>2}\frac{\left( 1- p^{-1}\right)^{\sum_i|V_{\{i\}}|^{-1}}}{\Sigma(p)} \times \\
\sum_{\substack{f \in \RAlt \\ J \subseteq \{i \in [n] : \{i\} \in \mathcal{D}\}}} \hspace{-0.5cm} \Tw(f,\mathbf{tw}) \Tw(f,\mathbf{s}) \prod_{i \in J} \leg{-1}{s_iu_i} \hspace{-0.5cm} \sum_{\substack{(w_{S})_{|S| \geq 2} \\ w_S = 1\text{ if }f(S) \notin W_S}}^{\flat \flat} \hspace{-0.55cm} \frac{ g(\prod_{|S| \geq 2} w_S, (b_i)_i) \prod_{|S| \geq 2} \BrM(f, J, S, w_S)}{\prod_{|S| \geq 2} c_S^{\omega(w_S)} \prod_{|S| \geq 2} w_S^{|S|} \prod_{i = 1}^n b_i^2},
\end{multline}
where the summation conditions $\flat \flat$ are the conditions from $\flat$ pertaining solely the variables $w_S$ with $|S| \geq 2$ (i.e.~$p \mid w_S$ implies $p \equiv 1 \bmod 4$ for all $S \in \mathcal{D}$ and $\gcd(\{w_S\} \cup \{b_i : i \not \in S\}) = 1$ for all $S$). Here we have dropped the condition $w_S \leq (\log B)^{100}$ incurring an acceptable error.

We are now going to spend some time computing
\begin{equation}
\label{eq:bsum}
\sum_{\substack{(b_i)_i \\ \gcd(2^{\lambda_1}b_1, \dots, 2^{\lambda_n}b_n) = 1}} \sum_{\substack{(w_S)_{|S| \geq 2} \\ w_S = 1 \text{ if } f(S) \notin W_S\\ p \mid w_S \Rightarrow p \equiv 1 \bmod 4 \ \forall S \in \mathcal{D} \\ \gcd(\{w_S\} \cup \{b_i : i \notin S\}) = 1}} \frac{g(\prod_{|S| \geq 2} w_S, (b_i)_i) \prod_{|S| \geq 2} \BrM(f, J, S, w_S)}{\prod_{|S| \geq 2} c_S^{\omega(w_S)} \prod_{|S| \geq 2} w_S^{|S|} \prod_{i = 1}^n b_i^2} .
\end{equation}
Note that the sum~\eqref{eq:bsum} is exactly the inner sum of the asymptotic expansion~\eqref{eFinalSum3} of $N_2(B, \mathbf{s}, \mathbf{u}, \boldsymbol{\lambda}, \mathbf{b})$ summed over all $\mathbf{b}$. This sum is readily seen to be absolutely convergent. Once we are done with calculating~\eqref{eq:bsum}, we plug the result back into~\eqref{eFinalSum3} to get an asymptotic expansion for $\sum_\mathbf{b} N_2(B, \mathbf{s}, \mathbf{u}, \boldsymbol{\lambda}, \mathbf{b})$.

\begin{lemma}
\label{lgProperties}
If $\gcd(b_1, \dots, b_n) = 1$, then the function $g$ satisfies 
\[
g(w, (b_i)_i) = \mu^2\left(2 w\right)\prod_{p\mid w}\Sigma(p)\prod_{m=1}^n\prod_{\substack{p>2\\p\mid b_i\forall i\neq m\\p\nmid b_mw}}\Sigma_m(p),\]
where $\Sigma(p)$ and $\Sigma_m(p)$ are defined in~\eqref{eSigmaDef}.
In particular, if $w \prod_{i = 1}^n b_i$ and $w' \prod_{i = 1}^n b_i'$ are coprime, then
\[
g(ww', (b_i b_i')_i) = g(w, (b_i)_i) g(w', (b_i')_i).\]
\end{lemma}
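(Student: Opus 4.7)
The plan is to unfold the definition $g(w, (b_i)_i) = \mathbf{F}(d_1, \dots, d_n) \mu^2(2w)$ with $d_j = w \cdot \gcd(\{b_i : i \in [n] - \{j\}\})$, and to rewrite $\mathbf{F}$ as an Euler product prime by prime; both assertions will then follow from the same local analysis. First I would fix an odd prime $p$ and determine the set $\{i \in [n] : p \nmid d_i\}$, which is the only piece of data about $(d_1, \dots, d_n)$ that feeds into the local factor in~\eqref{eq:Fdef}.

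The analysis splits naturally into three cases. When $p \mid w$ the prime $p$ divides every $d_i$, so $\{i : p \nmid d_i\} = \varnothing$ and the local factor collapses to $\Sigma(p)$. When $p \nmid w$, divisibility of $d_j$ by $p$ is equivalent to $p$ dividing $b_i$ for every $i \neq j$, and the hypothesis $\gcd(b_1, \dots, b_n) = 1$ guarantees that the set $\{i : p \mid b_i\}$ omits at least one index. If it omits exactly one index $m$, then $p \mid d_j$ precisely when $j = m$, giving $\{i : p \nmid d_i\} = [n] - \{m\}$ and local factor $\Sigma_m(p)$. If it omits two or more indices, then $p$ divides none of the $d_j$ and contributes nothing to $\mathbf{F}$. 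Collecting the surviving factors and multiplying by $\mu^2(2w)$ yields the closed form displayed in the lemma.

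The multiplicativity claim is then almost automatic. Under the assumption $\gcd(w \prod b_i, w' \prod b_i') = 1$, every prime $p > 2$ appearing in the Euler product for $g(ww', (b_ib_i')_i)$ touches only one of the two tuples, and the case analysis above identifies its local factor with the corresponding factor of either $g(w, (b_i)_i)$ or $g(w', (b_i')_i)$. Combined with $\mu^2(2ww') = \mu^2(2w)\mu^2(2w')$, which holds because $w$ and $w'$ are coprime odd integers, this yields the required factorisation.

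There is no real analytic obstacle: the whole argument is essentially bookkeeping. The one point that needs genuine care is the dichotomy in the $p \nmid w$ case, where the coprimality $\gcd(b_1, \dots, b_n) = 1$ is precisely what guarantees that the index $m = m(p)$ (when it exists) is unique, and hence that each prime contributes at most one $\Sigma_m(p)$ factor to the product. Without that hypothesis a single prime could be forced into several mutually incompatible $\Sigma_m(p)$ slots and the closed form would fail.
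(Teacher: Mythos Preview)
The paper states Lemma~\ref{lgProperties} without proof, leaving it as a routine verification. Your proposal correctly supplies that verification: the prime-by-prime case analysis of $\{i : p \nmid d_i\}$ is exactly the computation needed, and each case matches the claimed local factor $\Sigma(p)$, $\Sigma_m(p)$, or $1$. The multiplicativity then follows as you say. One small point worth making explicit: to apply the closed form to $g(ww',(b_ib_i')_i)$ you need $\gcd(b_1b_1',\dots,b_nb_n')=1$, which indeed follows from $\gcd(b_1,\dots,b_n)=1$, $\gcd(b_1',\dots,b_n')=1$ and the coprimality of $w\prod b_i$ with $w'\prod b_i'$ (any prime dividing every $b_ib_i'$ would, by the last condition, divide every $b_i$ or every $b_i'$).
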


We make a small detour by computing certain geometric series.

\begin{lemma}
\label{lRationalFunctions}
Let $n \geq 1$ be an integer, let $i\in[n]$ and let $\alpha > 0$ be real. Then we have
$$
\sum_{\substack{k_1, \dots, k_n \\ \min(k_1, \dots, k_i) = 0}} \frac{1}{p^{\alpha(k_1 + \dots + k_n)}} = \left(1 - \frac{1}{p^{\alpha i}}\right) \left(\frac{1}{1 - p^{-\alpha}}\right)^n.
$$
\end{lemma}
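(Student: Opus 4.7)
The plan is to evaluate the sum by complementation with the unconstrained geometric series. The constraint $\min(k_1, \dots, k_i) = 0$ means ``not all of $k_1, \dots, k_i$ are strictly positive'', so I would split
\[
\sum_{\substack{k_1, \dots, k_n \geq 0 \\ \min(k_1, \dots, k_i) = 0}} \frac{1}{p^{\alpha(k_1 + \dots + k_n)}}
=
\sum_{k_1, \dots, k_n \geq 0} \frac{1}{p^{\alpha(k_1 + \dots + k_n)}}
-
\sum_{\substack{k_1, \dots, k_i \geq 1 \\ k_{i+1}, \dots, k_n \geq 0}} \frac{1}{p^{\alpha(k_1 + \dots + k_n)}}.
\]

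The first (unrestricted) sum factors as a product of $n$ independent geometric series, giving $(1 - p^{-\alpha})^{-n}$. For the second sum, I would perform the change of variables $k_j \mapsto k_j + 1$ for $j = 1, \dots, i$, which pulls out a factor of $p^{-\alpha i}$ and reduces it to the same unrestricted product; hence it equals $p^{-\alpha i} (1 - p^{-\alpha})^{-n}$. Subtracting yields
\[
\frac{1}{(1-p^{-\alpha})^n} - \frac{p^{-\alpha i}}{(1-p^{-\alpha})^n} = \left(1 - \frac{1}{p^{\alpha i}}\right)\left(\frac{1}{1-p^{-\alpha}}\right)^n,
\]
as required. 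There is no real obstacle here: the only thing to check is that the shift of the first $i$ variables is a bijection between $\{(k_1,\dots,k_i)\in\Z_{\geq 1}^i\}$ and $\Z_{\geq 0}^i$, which is immediate.
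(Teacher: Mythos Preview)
Your proof is correct and follows essentially the same approach as the paper: both arguments split off the complement $\{\min(k_1,\dots,k_i)\geq 1\}$ from the unrestricted sum, identify the complementary sum as $p^{-\alpha i}$ times the full geometric product (you do this via the shift $k_j\mapsto k_j+1$, the paper simply ``divides through by $p^{\alpha i}$''), and subtract.
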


\begin{proof}
We have
\begin{equation}
\label{eSplitTrick}
\sum_{\substack{k_1, \dots, k_n \\ \min(k_1, \dots, k_i) = 0}} \frac{1}{p^{\alpha(k_1 + \dots + k_n)}} + \sum_{\substack{k_1, \dots, k_n \\ \min(k_1, \dots, k_i) \geq 1}} \frac{1}{p^{\alpha(k_1 + \dots + k_n)}} = \sum_{k_1, \dots, k_n} \frac{1}{p^{\alpha(k_1 + \dots + k_n)}}.
\end{equation}
By dividing through $p^{\alpha i}$ we also see that
$$
\sum_{\substack{k_1, \dots, k_n \\ \min(k_1, \dots, k_i) \geq 1}} \frac{1}{p^{\alpha(k_1 + \dots + k_n)}} = \frac{1}{p^{\alpha i}} \sum_{k_1, \dots, k_n} \frac{1}{p^{\alpha(k_1 + \dots + k_n)}}.
$$
Plugging this back into equation~\eqref{eSplitTrick} we obtain
\[
\sum_{\substack{k_1, \dots, k_n \\ \min(k_1, \dots, k_i) = 0}} \frac{1}{p^{\alpha(k_1 + \dots + k_n)}} = \left(1 - \frac{1}{p^{\alpha i}}\right) \sum_{k_1, \dots, k_n} \frac{1}{p^{\alpha(k_1 + \dots + k_n)}} = \left(1 - \frac{1}{p^{\alpha i}}\right) \left(\frac{1}{1 - p^{-\alpha}}\right)^n
\]
as desired.
\end{proof}

Set
$$
T \coloneqq \sum_{\substack{(k_i)_i \\ \min(k_1+\lambda_1, \dots, k_n+\lambda_n) = 0}} \frac{1}{2^{2 \sum_i k_i}}.
$$
Thanks to Lemma~\ref{lgProperties}, we may rewrite~\eqref{eq:bsum} as an Euler product
$$
T \times \prod_{p > 2} \left(\sum_{\substack{(j_S)_{|S| \geq 2}, \ j_S = 0 \text{ if } f(S) \notin W_S \\ \sum_{|S| \geq 2} j_S\in\{0,1\} \\ p \equiv 3 \bmod 4 \Rightarrow j_S = 0 \ \forall S \in \mathcal{D}}} \sum_{\substack{(k_i)_i \\ \min(k_1, \dots, k_n) = 0 \\ \min(j_S, \{k_i : i \notin S\}) = 0}} \hspace{-0.5cm} \frac{g(\prod_{|S| \geq 2}p^{j_S}, (p^{k_i})_i) \prod_{|S| \geq 2} \BrM(f, J, S, p^{j_S})}{\prod_{|S| \geq 2} c_S^{\mathbf{1}_{j_S > 0}} p^{\sum_{|S| \geq 2} |S| j_S} p^{2 \sum_i k_i}}
\right).
$$
The restriction $\sum_{|S| \geq 2} j_S \in\{0,1\}$ comes exactly from the coprimality of the $w_S$, while the summation conditions $\min(k_1, \dots, k_n) = 0$ and $\min(j_S, \{k_i : i \notin S\}) = 0$ are translations of respectively $\gcd(b_1, \dots, b_n) = 1$ and $\gcd(\{w_S\} \cup \{b_i : i \notin S\}) = 1$. The $2$-adic term $T$ equals
\begin{equation}
\label{eq:twocont}
T = \sum_{(k_i)_i } \frac{1}{2^{2 \sum_i k_i}}-\sum_{\substack{(k_i)_i \\ \lambda_i = 0 \Rightarrow k_i \geq 1}} \frac{1}{2^{2 \sum_i k_i}} = \left(\frac{1}{1 - 2^{-2}}\right)^n \left(1 - \frac{1}{2^{2|\{i \in [n]: \lambda_i = 0\}|}}\right).
\end{equation}
We calculate the various types of contributions to the Euler product individually.

\subsubsection{\texorpdfstring{Case 1: $j_S = 0$ for all $S$}{Case 1}}
First consider the case where $j_S = 0$ for all $S$. For $m \in [n]$, define
\[
E_m \coloneqq \{(k_1, \dots, k_n) \in \Z_{\geq 0}^n : k_m = 0, \ k_j \geq 1\text{ for } j \neq m\}.
\]
By Lemma~\ref{lgProperties}, we have
$$
g(1, (p^{k_i})_i) =
\begin{cases}
\hfil \Sigma_m(p) &\text{if } (k_1, \dots, k_n) \in E_m, \\
\hfil 1 &\text{otherwise.}
\end{cases}
$$ 
Note that $E_1, \dots, E_n$ are pairwise disjoint subsets of $\{(k_1, \dots, k_n) : \min(k_1, \dots, k_n) = 0\}$. Therefore the contribution in this case is
\begin{equation}
\label{eC0}
C_0 \coloneqq \sum_{\substack{k_1, \dots, k_n \\ \min(k_1, \dots, k_n) = 0}} \frac{1}{p^{2(k_1 + \dots + k_n)}} + \sum_{m = 1}^n (\Sigma_m(p) - 1) \sum_{(k_1, \dots, k_n)\in E_m} \frac{1}{p^{2(k_1 + \dots + k_n)}}.
\end{equation}
The first sum in~\eqref{eC0} can be evaluated using Lemma~\ref{lRationalFunctions}, which gives 
\begin{equation}
\label{eGeom0}
\sum_{\substack{k_1, \dots, k_n \\ \min(k_1, \dots, k_n) = 0}} \frac{1}{p^{2(k_1 + \dots + k_n)}} = \left(1 - \frac{1}{p^{2 n}}\right) \frac{1}{(1 - p^{-2})^n}.
\end{equation}
The sum over $(k_1, \dots, k_n) \in E_m$ is a product of $n - 1$ geometric series
\begin{equation}
\label{eGeom1}
\sum_{(k_1, \dots, k_n) \in E_m} \frac{1}{p^{2(k_1 + \dots + k_n)}} = \frac{1}{p^{2 (n - 1)}} \left(\frac{1}{1 - p^{-2}}\right)^{n - 1}.
\end{equation}
Also observe that
\begin{equation}
\label{eGeom2}
\sum_{m = 1}^n \Sigma_m(p) = n - 1 + \Sigma(p).
\end{equation}
Putting equations~\eqref{eC0},~\eqref{eGeom0},~\eqref{eGeom1} and~\eqref{eGeom2} together, we compute
\begin{align*}
\frac{C_0 }{\Sigma(p)}
&= \left(1 - \frac{1}{p^{2 n}}\right) \frac{\Sigma(p)^{-1}}{(1 - p^{-2})^n} - \frac{\Sigma(p)^{-1}-1}{p^{2(n-1)}(1 - p^{-2})^{n - 1}}  \\
&= \frac{1}{(1 - p^{-2})^n}\left( 1 - \frac{1}{p^{2 n}}+\left(1-\frac{1}{p^{2(n - 1)}} \right)\sum_{j = 1}^n \frac{h_{j}(p)}{pc_j}\right).
\end{align*}
Note that $h_j(p) = 1$ implies that $\{j\} \notin \mathcal{D}$. So assuming $h_j(p) = 1$, we have $(-1,p)_2^{|J\cap \{j\}|} = 1$, and thus we conclude that $\BrM(f, J, \{j\}, p) \cdot h_j(p) = h_j(p)$. This allows us to write
\begin{equation}
\label{eCmRewrite}
\frac{C_0 }{\Sigma(p)} = \frac{1}{(1 - p^{-2})^n}\sum_{\substack{\varnothing\subseteq S\subset [n] \\ |S| \in \{0,1\}}} \frac{\BrM(f, J, S, p)h_{S}(p)}{c_S\cdot p^{|S|}}\left(1-\frac{1}{p^{2(n - |S|)}}\right).
\end{equation}

\subsubsection{\texorpdfstring{Case 2: $j_S > 0$ for some $S$}{Case 2}}
Suppose that $\sum_{|S| \geq 2} j_S = 1$. Then there exists a unique $S$ such that $j_S = 1$, and we have $j_T = 0$ for all $T \neq S$. Lemma~\ref{lgProperties} yields
$$
g(p, (p^{k_i})_i) = \Sigma(p).
$$
Therefore, if $f(S) \in W_S$, then the total contribution from $j_S = 1$ is
\begin{equation}
\label{eCS}
C_S \coloneqq \frac{h_S(p)}{(1 - p^{-2})^n} \left(1 - \frac{1}{p^{2 (n - |S|)}}\right) \times \BrM(f, J, S, p) \times \frac{\Sigma(p)}{c_S \cdot p^{|S|}}
\end{equation}
by Lemma~\ref{lRationalFunctions}.

\subsubsection{Conclusion}
The expression in~\eqref{eq:bsum} is
$$
T \times \prod_{p > 2} \left(C_0 + \sum_{\substack{|S| \geq 2\\f(S)\in W_S}} C_S\right),
$$
where $T$, $C_0$ and $C_S$ are defined in respectively equations~\eqref{eq:twocont},~\eqref{eC0} and~\eqref{eCS}. Then the Euler factor at $p$ equals
\begin{multline*}
\frac{(1 - p^{-1})^{\sum_i |V_{\{i\}}|^{-1}}}{\Sigma(p)}\left(C_0 + \sum_{\substack{|S| \geq 2\\f(S)\in W_S}} C_S\right) \\
= \frac{(1 - p^{-1})^{\sum_i |V_{\{i\}}|^{-1}}}{(1 - p^{-2})^n} \sum_{\substack{\varnothing \subseteq S \subset [n] \\ f(S) \in W_S}} \frac{\BrM(f, J, S, p) \cdot h_S(p)}{c_S \cdot p^{|S|}} \left(1 - \frac{1}{p^{2(n - |S|)}}\right),
\end{multline*}
where we used~\eqref{eCmRewrite}. Returning to equation~\eqref{eFinalSum3}, this allows us to evaluate the main term of $\sum_\mathbf{b} N_2(B, \mathbf{s}, \mathbf{u}, \boldsymbol{\lambda}, \mathbf{b})$. We will summarise our work in the form of the following theorem.

\begin{theorem}
\label{tTechnicalHeart}
Let $\mathbf{s}, \mathbf{u}, \boldsymbol{\lambda}$ be given. Then we have as $B \rightarrow \infty$
\begin{multline*}
N(B, \mathbf{s}, \mathbf{u}, \boldsymbol{\lambda}) \sim \frac{B^n (\log B)^{-n + \sum_i |V_{\{i\}}|^{-1}}}{\prod_i \Gamma(|V_{\{i\}}|^{-1})} \times \frac{\left(1 - \frac{1}{2^{2|\{i\in [n]: \lambda_i = 0\}|}}\right)}{2^{2n + \sum_i \left(\lambda_i + |V_{\{i\}}|^{-1}\right)} \times (1 - 2^{-2})^n} \times \\
\sum_{f \in \RAlt} \Tw(f, \mathbf{tw}) \Tw(f, \mathbf{s}) \sum_{J \subseteq \{i \in [n] : \{i\} \in \mathcal{D}\}} \prod_{i\in J} \leg{-1}{s_iu_i} \times \prod_{p>2} \Lambda_p(f,J),
\end{multline*}
where 
\[ 
\Lambda_p(f,J)\coloneqq \frac{\left( 1- p^{-1}\right)^{\sum_i|V_{\{i\}}|^{-1}}}{(1 - p^{-2})^n} \sum_{\substack{\varnothing \subseteq S \subset [n] \\ f(S)\in W_S}} \frac{\BrM(f, J, S, p) \cdot h_S(p)}{c_S \cdot p^{|S|}} \left(1 - \frac{1}{p^{2(n - |S|)}}\right).
\]
In particular, we have
\begin{multline}
\label{eq:finalnonproj}
N(B) \sim \frac{B^n (\log B)^{-n + \sum_i |V_{\{i\}}|^{-1}}}{\prod_i \Gamma(|V_{\{i\}}|^{-1}) \times 2^{ \sum_i |V_{\{i\}}|^{-1}} \times 3^n} \times \hspace{-0.45cm}
\sum_{\substack{f \in \RAlt \\ J \subseteq \{i \in [n] : \{i\} \in \mathcal{D}\}}}
\sum_{\mathbf{s} \textup{ admissible}} \Tw(f, \mathbf{s}) \prod_{i \in J} \leg{-1}{s_i} \times\\
\sum_{(\mathbf{u}, \boldsymbol{\lambda}) \textup{ admissible}}
\left(1 - \frac{1}{2^{2(n-\sum_i \lambda_i)}}\right) \times
\frac{1}{2^{\sum_i \lambda_i}} \times \Tw(f, 2^{\boldsymbol{\lambda}} \mathbf{u}) \times \prod_{i \in J} \leg{-1}{u_i} \times \prod_{p>2} \Lambda_p(f,J).
\end{multline}
\end{theorem}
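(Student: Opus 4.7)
The plan is to assemble the pieces already developed in this section. Starting from Theorem~\ref{tIntermediate}, the first step is to apply the main counting Theorem~\ref{theorem:maincounting} to the inner singleton sum over $(w_i)_{i \in [n]}$ in equation~\eqref{eFinalSum2}, with parameters $\alpha_i = s_i u_i \prod_{|S| \geq 2,\, i \in S} w_S$ matching the congruence conditions in $\flat$, and $h_i$ capturing the primes-$1$-mod-$4$ restriction forced by $\{i\} \in \mathcal{D}$. The characteristic function $\frac{1}{2}(1 + (-1/\alpha_i))$ needed to detect $\alpha_i \equiv 1 \bmod 4$ is expanded as a sum over $J \subseteq \{i : \{i\} \in \mathcal{D}\}$, producing the factor $\prod_{i \in J} \leg{-1}{s_i u_i}$ together with auxiliary Legendre symbols in the $w_S$ for $|S| \geq 2$ that will be absorbed into $\BrM(f, J, S, w_S)$. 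The result is the asymptotic expansion~\eqref{eFinalSum3}, valid uniformly in $\mathbf{b}$ and $(w_S)_{|S| \geq 2}$.

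Next I would sum over $\mathbf{b}$, combining the $\mathbf{b}$-sum and the $(w_S)_{|S| \geq 2}$-sum into the double sum~\eqref{eq:bsum}. This sum is absolutely convergent because each variable $w_S$ with $|S| \geq 2$ carries a weight $w_S^{-|S|}$ with $|S| \geq 2$, and each $b_i$ carries $b_i^{-2}$. By the multiplicativity of $g$ established in Lemma~\ref{lgProperties} and the pairwise coprimality enforced by $\flat$, the double sum factors as an Euler product, with $2$-adic factor $T$ evaluated in~\eqref{eq:twocont}.

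The Euler factor at $p > 2$ is computed by partitioning according to the $p$-adic valuations $j_S$ of $w_S$ for $|S| \geq 2$; the squarefree condition forces $\sum_{|S| \geq 2} j_S \in \{0, 1\}$. Case~1 ($j_S = 0$ for all $S$) contributes $C_0$ from equation~\eqref{eC0}, evaluated with Lemma~\ref{lRationalFunctions} and rewritten in the unified form~\eqref{eCmRewrite}; crucially one uses $\BrM(f, J, \{j\}, p) h_j(p) = h_j(p)$, valid because $h_j(p) = 1$ forces $\{j\} \notin \mathcal{D}$ and hence $|J \cap \{j\}| = 0$. Case~2 ($\sum j_S = 1$) contributes $C_S$ from equation~\eqref{eCS} for each $S$ with $f(S) \in W_S$. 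The main obstacle is the algebraic bookkeeping that lets the $|S| \in \{0, 1\}$ and $|S| \geq 2$ contributions combine into the single unified sum $\sum_{\varnothing \subseteq S \subset [n]}$ defining $\Lambda_p(f, J)$; this requires consistently tracking the Hilbert-symbol twists generated in Section~\ref{sComb}, the multiplicative weight $c_S^{-\omega(w_S)}$, the indicators $h_S$, and the geometric series factors $(1 - p^{-2(n - |S|)})$, so that the exponent of $(-1, p)_2$ in each case matches the definition of $\BrM(f, J, S, \cdot)$.

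Substituting the computed Euler product back into~\eqref{eFinalSum3} and collecting constants yields the stated asymptotic for $\sum_\mathbf{b} N_2(B, \mathbf{s}, \mathbf{u}, \boldsymbol{\lambda}, \mathbf{b})$; combined with the error terms from Section~\ref{sChar} and Section~\ref{sComb} (specifically Lemma~\ref{lGCD}, Lemma~\ref{lemma:SW}, Lemma~\ref{lemma:LS}, and Lemma~\ref{lemma:notblocking}) this gives the first displayed formula. To obtain~\eqref{eq:finalnonproj}, sum over the finitely many admissible $\mathbf{s}$ and $(\mathbf{u}, \boldsymbol{\lambda})$: the factor $T$ becomes $(1 - 2^{-2(n - \sum_i \lambda_i)}) (1 - 2^{-2})^{-n}$ after using $|\{i : \lambda_i = 0\}| = n - \sum_i \lambda_i$, the $\prod_{i \in J} \leg{-1}{s_i u_i}$ splits as $\prod_{i \in J} \leg{-1}{s_i} \leg{-1}{u_i}$, and the remaining numerical constants (the $2^{-2n}$ from Theorem~\ref{theorem:maincounting}, the normalization $2^{-\sum_i \lambda_i}$, and the $|V_{\{i\}}|^{-1} = c_i^{-1}$ exponents) collapse into the prefactor $\bigl(\prod_i \Gamma(|V_{\{i\}}|^{-1}) \cdot 2^{\sum_i |V_{\{i\}}|^{-1}} \cdot 3^n\bigr)^{-1}$ appearing in~\eqref{eq:finalnonproj}.
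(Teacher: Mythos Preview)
Your proposal is correct and follows essentially the same route as the paper: you apply Theorem~\ref{theorem:maincounting} to the singleton sum in~\eqref{eFinalSum2}, expand the $\alpha_i \equiv 1 \bmod 4$ detector over $J$, pass to~\eqref{eFinalSum3}, sum over $\mathbf{b}$ to obtain~\eqref{eq:bsum}, factor it as an Euler product via Lemma~\ref{lgProperties}, compute the local factors through the $C_0$/$C_S$ case analysis and the identity $\BrM(f,J,\{j\},p)h_j(p)=h_j(p)$, and finally sum over admissible data to reach~\eqref{eq:finalnonproj}. The only minor imprecision is the phrase ``$h_j(p)=1$ forces $\{j\}\notin\mathcal{D}$'': strictly this need only hold when $p\equiv 3\bmod 4$ (when $p\equiv 1\bmod 4$ one instead uses $(-1,p)_2=1$ directly), but the conclusion you draw from it is correct.
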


\subsubsection{Projective statements}
\label{ssProj}
We started with a general affine counting problem in Section~\ref{sChar}, which led to our main counting result in Theorem~\ref{tTechnicalHeart}. However, it turns out that the main term can be slightly simplified in case we had started with a projective counting problem. We shall set up the requisite lemmas here for such a simplification. This will play a key role in comparing Theorem~\ref{tTechnicalHeart} with the Loughran--Rome--Sofos prediction in Section~\ref{sLeading}, which only deals with the projective setting.

For a place $v$ of $\Q$, we define
$$
\Tw_v(f, \mathbf{a}) \coloneqq \prod_{i \in [n]} (-1, a_i)_v^{|\{-\} \cap f(\{i\})|} \times \prod_{\{i, j\} \subseteq [n]} (a_i, a_j)_v^{|\{j\} \cap f(\{i\})|}.
$$

\begin{lemma}
\label{lemma:projcompare}
Suppose that $|g_{i, \{j\}} \cap [n]| \equiv 0 \bmod 2$ for all $i \in [m]$ and all $j \in [n]$. Let $\lambda \in \Q^{\times}$ and let $v$ be a place of $\Q$. Then
\[
\Tw_v(f, \mathbf{a}) \Tw_v(f, \lambda\mathbf{a}) = (-1, \lambda)_v^{\sum_{i \in [n]} |\{-\} \cap f(\{i\})| + \sum_{\{i, j\} \subseteq [n]} |\{j\} \cap f(\{i\})|}.
\]
\end{lemma}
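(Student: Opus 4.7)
The plan is to expand the product $\Tw_v(f, \mathbf{a}) \Tw_v(f, \lambda\mathbf{a})$ directly using bilinearity of the Hilbert symbol and then show that all terms involving the $a_i$ cancel, leaving only the claimed power of $(-1, \lambda)_v$. First I would compute the contribution of the singleton factors:
$$(-1, a_i)_v \cdot (-1, \lambda a_i)_v = (-1, \lambda)_v (-1, a_i)_v^2 = (-1, \lambda)_v.$$
So the singleton part of the product contributes exactly $(-1, \lambda)_v^{\sum_i |\{-\} \cap f(\{i\})|}$, which is the first half of the desired exponent.

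Next I would handle the pair factors using the standard identity $(\lambda, \lambda)_v = (-1, \lambda)_v$ (derived from $(\lambda, -\lambda)_v = 1$) and the fact that $(a_i, a_j)_v^2 = 1$. Expanding via bilinearity,
$$(a_i, a_j)_v (\lambda a_i, \lambda a_j)_v = (\lambda, \lambda)_v (\lambda, a_j)_v (a_i, \lambda)_v = (-1, \lambda)_v (\lambda, a_i a_j)_v.$$
Collecting the $(-1, \lambda)_v$ terms across all pairs gives exactly $(-1, \lambda)_v^{\sum_{\{i, j\}} |\{j\} \cap f(\{i\})|}$, providing the second half of the desired exponent. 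It thus remains to show that the residual factor
$$\prod_{\{i, j\} \subseteq [n]} (\lambda, a_i a_j)_v^{|\{j\} \cap f(\{i\})|}$$
is trivial.

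The key step will be showing this residual factor vanishes. Using the symmetry $|\{j\} \cap f(\{i\})| = |\{i\} \cap f(\{j\})|$ guaranteed by $f \in \OBMS$, I can split each unordered pair and group by the variable to rewrite this as
$$\prod_{i \in [n]} (\lambda, a_i)_v^{\sum_{j \neq i} |\{j\} \cap f(\{i\})|}.$$
Since $|\{i\} \cap f(\{i\})| = 0$ by definition of $\OBMS$, the exponent simplifies to $|f(\{i\}) \cap [n]|$. The hypothesis that $|g_{l, \{j\}} \cap [n]|$ is even for all $l \in [m]$ and $j \in [n]$, combined with the mod $2$ additivity of intersection under symmetric difference (namely $|(X + Y) \cap [n]| \equiv |X \cap [n]| + |Y \cap [n]| \bmod 2$), shows that every element of the $\FF_2$-span $V_{\{i\}}$ has even intersection with $[n]$. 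Since $f(\{i\}) \in V_{\{i\}}$, the exponent $|f(\{i\}) \cap [n]|$ is even, which makes each factor trivial and completes the proof.

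The main obstacle is essentially bookkeeping: keeping track of which Hilbert symbols survive the expansion and correctly invoking the alternating and linearity properties of $\OBMS$. Once the symmetry $|\{j\} \cap f(\{i\})| = |\{i\} \cap f(\{j\})|$ is exploited to symmetrize the cross terms, the hypothesis on $g_{l, \{j\}}$ applied $\FF_2$-linearly to $V_{\{i\}}$ finishes the argument with no further input.
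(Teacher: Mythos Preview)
Your proposal is correct and follows essentially the same route as the paper: expand by bilinearity, pull out $(-1,\lambda)_v$ from the singleton factors and from the pair factors via $(\lambda,\lambda)_v=(-1,\lambda)_v$, and then kill the residual $\prod_{\{i,j\}}(\lambda,a_ia_j)_v^{|\{j\}\cap f(\{i\})|}$ by showing each $a_i$ occurs with exponent $|f(\{i\})\cap[n]|$, which is even since $f(\{i\})\in V_{\{i\}}$ and the generators $g_{l,\{i\}}$ satisfy the parity hypothesis. The only cosmetic difference is that the paper phrases the last step as ``the product $\prod (a_ia_j)^{\dots}$ is a square'' rather than regrouping into $\prod_i(\lambda,a_i)_v^{\dots}$, but this is the same computation.
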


\begin{proof}
By definition we have
\begin{multline*}
\Tw_v(f, \mathbf{a})\Tw_v(f, \lambda\mathbf{a}) =
\prod_{i\in[n]} (-1, a_i)_v^{|\{-\} \cap f(\{i\})|} \times \prod_{\{i, j\} \subseteq [n]} (a_i, a_j)_v^{|\{j\} \cap f(\{i\})|} \\
\times
\prod_{i\in[n]}(-1, a_i\lambda)_v^{|\{-\} \cap f(\{i\})|} \times \prod_{\{i, j\} \subseteq [n]} (a_i\lambda, a_j\lambda)_v^{|\{j\} \cap f(\{i\})|}.
\end{multline*}
We rewrite this as
\begin{align*}
&= (-1, \lambda)_v^{\sum_{i \in [n]} |\{-\} \cap f(\{i\})|} \times \prod_{\{i, j\} \subseteq [n]}
(\lambda, a_j)_v^{|\{j\} \cap f(\{i\})|}(a_i, \lambda)_v^{|\{j\} \cap f(\{i\})|}(\lambda, \lambda)_v^{|\{j\} \cap f(\{i\})|} \\
&= (-1, \lambda)_v^{\sum_{i \in [n]} |\{-\} \cap f(\{i\})|+\sum_{\{i, j\} \subseteq [n]}|\{j\} \cap f(\{i\})|} \times \prod_{\{i, j\} \subseteq [n]} (\lambda, a_ja_i)_v^{|\{j\} \cap f(\{i\})|}.
\end{align*}
It remains to check that 
$$
\prod_{\{i, j\} \subseteq [n]} (\lambda, a_ja_i)_v^{|\{j\} \cap f(\{i\})|} = 1. 
$$
The number of copies of $a_i$ in $\prod_{\{i, j\} \subseteq [n]} (a_ja_i)^{|\{j\} \cap f(\{i\})|}$ equals 
$$
\sum_{j \in [n]} |\{j\} \cap f(\{i\})| = |f(\{i\}) \cap [n]|, 
$$
which is even because $f(\{i\}) \in V_{\{i\}}$ and because $V_{\{i\}}$ is generated by the $g_{j, \{i\}}$ with $j \in [m]$. Hence we conclude that $\prod_{\{i, j\} \subseteq [n]} (a_ja_i)^{|\{j\} \cap f(\{i\})|}$ is a square, and the result follows.
\end{proof}

\begin{lemma}
\label{lemma:projcancellation}
Assume additionally in~\eqref{eDefV} that
\[
\deg M_{i,1} \equiv \deg M_{i,2} \equiv \deg M_{i,3} \bmod 2 \quad \quad \textup{ for every } i \in [m].
\]
Let $f \in \RAlt$ and $J \subseteq \{i \in [n] : \{i\} \in \mathcal{D}\}$. Assume that
\begin{equation}
\label{eq:Jeqcancel}
|J| \not \equiv \sum_{i\in[n]}|\{-\} \cap f(\{i\})|+\sum_{\{i, j\} \subseteq [n]}|\{j\} \cap f(\{i\})|\bmod 2.
\end{equation}
Then we have
\[
\sum_{\mathbf{s} \textup{ admissible}} \Tw(f, \mathbf{s}) \prod_{i \in J} \leg{-1}{s_i} = 0.
\]
\end{lemma}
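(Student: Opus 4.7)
The plan is to exhibit a fixed-point-free involution on the set of admissible sign vectors under which the summand changes sign, forcing pairwise cancellation. The obvious candidate is $\mathbf{s} \mapsto -\mathbf{s}$, which has no fixed points since $n \geq 1$.

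First I would check that this involution preserves admissibility. Because each $M_{i, j}$ is a monomial and, by hypothesis, the three degrees $\deg M_{i, 1}, \deg M_{i, 2}, \deg M_{i, 3}$ share a common parity $d_i \bmod 2$, we have $M_{i, j}(-\mathbf{s}) = (-1)^{d_i} M_{i, j}(\mathbf{s})$ with the sign depending only on $i$. This common factor scales the $i$-th conic uniformly, so $\varphi^{-1}(\mathbf{s})$ and $\varphi^{-1}(-\mathbf{s})$ have the same $\R$-solutions. Next I would verify the hypothesis of Lemma~\ref{lemma:projcompare} at singletons $S = \{j\}$: reading off the table defining $g_{i, S}$, either $g_{i, \{j\}} = \varnothing$ or $g_{i, \{j\}} \cap [n]$ is the symmetric difference of two of the sets $S_{i, k} \cap [n]$, whose cardinality is congruent mod $2$ to $\deg M_{i, k_1} + \deg M_{i, k_2}$, and hence even by the common-parity hypothesis.

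Given these, I would apply Lemma~\ref{lemma:projcompare} at $v = 2$ with $\lambda = -1$, using $(-1, -1)_2 = -1$, to get $\Tw(f, \mathbf{s}) \Tw(f, -\mathbf{s}) = (-1)^E$, where $E$ denotes the exponent appearing on the right-hand side of~\eqref{eq:Jeqcancel}. Combined with the Kronecker-symbol identity $\leg{-1}{-s_i} = -\leg{-1}{s_i}$ for $s_i \in \{\pm 1\}$, this gives
\[
\Tw(f, -\mathbf{s}) \prod_{i \in J} \leg{-1}{-s_i} = (-1)^{E + |J|}\, \Tw(f, \mathbf{s}) \prod_{i \in J} \leg{-1}{s_i}.
\]
The assumption~\eqref{eq:Jeqcancel} is precisely that $E + |J|$ is odd, so the contributions from $\mathbf{s}$ and $-\mathbf{s}$ cancel in pairs and the sum vanishes. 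The one genuinely delicate point is the admissibility check, where the common-parity hypothesis is used in an essential way to keep the involution within the admissible set; once this and the mild cardinality check needed to invoke Lemma~\ref{lemma:projcompare} are in place, the remainder of the argument is immediate.
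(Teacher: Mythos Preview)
Your proposal is correct and follows essentially the same approach as the paper: pair $\mathbf{s}$ with $-\mathbf{s}$, use the degree-parity hypothesis both to preserve admissibility and to verify the hypothesis of Lemma~\ref{lemma:projcompare}, and then conclude cancellation from $(-1)^{E+|J|}=-1$. You have in fact supplied more detail than the paper does, in particular the explicit parity computation showing $|g_{i,\{j\}}\cap[n]|\equiv\deg M_{i,k_1}+\deg M_{i,k_2}\equiv 0\bmod 2$, which the paper leaves implicit.
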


\begin{proof}
If $\mathbf{s} = (s_1, \dots, s_n)$ is admissible, then the assumption on the degrees of $M_{i, j}$ implies that $-\mathbf{s} = (-s_1, \dots, -s_n)$ is also admissible. We now group any admissible $\mathbf{s}$ with $-\mathbf{s}$. The assumption on the $M_{i, j}$ also ensures that we are allowed to apply Lemma~\ref{lemma:projcompare}. We conclude
\[
\sum_{\delta\in\{\pm 1\}}\Tw(f, \delta\mathbf{s}) \prod_{i \in J}\leg{-1}{\delta s_i}
= \Tw(f, \mathbf{s}) \prod_{i \in J} \leg{-1}{s_i} \sum_{\delta\in\{\pm 1\}}
\delta^{|J|+\sum_{i\in[n]}|\{-\} \cap f(\{i\})|+\sum_{\{i, j\} \subseteq [n]} |\{j\} \cap f(\{i\})|}
\]
which is zero thanks to equation~\eqref{eq:Jeqcancel}.
\end{proof}

\section{The leading constant}
\label{sLeading}
The purpose of the section is to prove Theorem~\ref{thm:main}. In order to do so, we first recall various notions from the paper of Loughran--Rome--Sofos~\cite{LRS}. We will henceforth assume the more stringent assumptions on $M_{i, j}$ stated just before~\eqref{eq:def} (so in particular, $M_{i, 1}$, $M_{i, 2}$ and $M_{i, 3}$ have the same degree).

\subsection{The conjecture}
We begin by recalling the notion of the subordinate Brauer group which plays a critical role in the Loughran--Rome--Sofos prediction and whose connection to character sum methods is one of the key innovations of the present paper.

We recall that for an integral scheme $X$ over a field $k$ of characteristic 0, the Brauer group is defined as $\Br X \coloneqq \Het^2_{\text{{\'e}t}}(X, \mathbb{G}_m)$ and that for regular $X$ one has the Grothendieck purity theorem
\[
0 \rightarrow \Br X \rightarrow \Br k(X) \rightarrow \bigoplus_{D \in X^{(1)}} \Het^1_{\text{{\'e}t}}(k(D), \Q/\Z),
\] 
where the map $\partial_D : \Br k(X) \rightarrow \Het^1_{\text{{\'e}t}}(k(D), \Q/\Z)$ is called the residue map at $D$. The Brauer group induces a pairing $X(\AAA_\Q) \times \Br X \rightarrow \Q/\Z$ on the adelic points of $X$, known as the Brauer--Manin pairing (c.f.~\cite[Section 13.3]{Brauerbook}).

\begin{definition}
\label{dSub}
Let $X$ be a regular integral Noetherian scheme over $\Q$ which admits an ample line bundle. Let $U \subseteq X$ be an open subset and let $\sB \subseteq \Br U$ be a finite collection of Brauer classes. Then a class $ b\in \Br k(X)$ is \emph{subordinate to} $\sB$ if for each codimension one point $D \in X^{(1)}$, the residue $\partial_D(b)$ lies in the set of residues $\partial_D(\langle \sB \rangle)$ of classes in the subgroup of $\Br U$ generated by $\sB$. The set of all subordinate classes is called the subordinate Brauer group and is denoted $\Br_{\Sub}(X, \sB)$.
\end{definition}

For a subset $V \subseteq X(\Q)$, we will denote by $V_\sB$ the set of points $v \in V$ such that $b(v) = 0$ for all $b \in \sB$. Note that we use the convention that $b(v)\neq 0$ at any point $v$ lying on the ramification locus of $b$. We use analogous notation for subsets of $W \subseteq X(\AAA_\Q)$, and we write $W^{\Br_{\Sub}(X, \sB)}$ for those elements of $W$ which are orthogonal under the Brauer--Manin pairing to every element of the subordinate Brauer group.

There is a virtual Artin representation associated to any collection of Brauer classes $\sB \subseteq \Br U$. Namely, let
\[
\Pic_\sB(\overline{X})_\C = \Pic(\overline{X})_\C - \sum_{D \in X^{(1)}} \left( 1 - \frac{1}{\vert\partial_D(\langle\sB\rangle) \vert} \right)\Ind_{\Q_D}^\Q \C.
\]
The Tamagawa measure $\tau_{\sB}$ associated to the problem is then defined as
\[
\Res_{s=1} L(\Pic_\sB(\overline{X})_\C, s) \times \prod_v \lambda_v^{-1}\omega_v
\]
with $\omega_v$ the local Tamagawa measures on $\PPP^{n-1}(\Q_v)$ due to Peyre~\cite[Definition~2.1]{Peyre} and $\lambda_v$ equal to the local factor of $L(\Pic_\sB(\overline{X})_\C, 1)$ for $v$ non-archimedean and 1 otherwise. The cone constant is given by
\begin{equation}
\label{eq:conecst}
\theta(X) \coloneqq \frac{\alpha(X)}{(\rk(\Pic X) - 1)!},
\end{equation}
where $\alpha(X)$ is Peyre's cone constant~\cite[Definition~2.4]{Peyre}. Finally, we recall that for a divisor $D$ in $X$, we have the modified Fujita invariant
\[
\eta(D) \coloneqq \sup \{ t >0: K_X +tD \in C_{\text{eff}}(X)\},
\] 
where $K_X$ denotes the canonical divisor and $C_{\text{eff}}$ denotes the cone of effective divisors on $X$.

\begin{conjecture}[{\cite[Conjecture~3.16]{LRS}}] \label{conj:LRS}
Let $X$ be a smooth, projective, geometrically integral, weak Fano variety over $\Q$. Let $U \subseteq X$ be an open subset and $\sB \subseteq \Br U$ a finite subset. Assume that $\Br X =\Br_1 X$, that $U(\Q)_{\sB} \neq \varnothing$, and that either $\rk(\Pic X) = 1$ or $\Q[U]^\times =\Q^\times$. Then there exists a thin subset $\Omega \subset X(\Q)$ such that
$$
\#\{ x \in U(\Q)_{\sB} : H(x) \leq B, x \notin \Omega \} \sim c_{\textup{LRS}} B(\log B)^{\rk(\Pic X) - \Delta(\sB) - 1}
$$
where
	\begin{align*}
	\Delta(\sB) & = \sum_{D \in X^{(1)}} ( 1 - |\partial_D(\langle\sB\rangle)|^{-1}), \\
		c_{\textup{LRS}} &= \frac{\theta(X) \cdot \lvert\Br_{\Sub}(X,\sB)/\Br \Q\rvert \cdot \tau_{\sB}\left(X(\AAA_\Q)_{\sB}^{\Br_{\Sub}(X, \sB)}\right)}
	{\Gamma(X,\sB)} \cdot 
	\prod_{D \in X^{(1)}} \eta(D)^{1- |\partial_D(\langle\sB\rangle)|^{-1}}, \\
	\Gamma(X,\sB) &= 
	\begin{cases}
		\hfil \prod_{D \in X^{(1)}}\Gamma(|\partial_D(\langle\sB\rangle)|^{-1}) & \textup{if } \rk(\Pic X) = 1, \\
		\hfil \Gamma\left( \rk(\Pic X) - \Delta(\sB)\right) & \textup{if } \Q[U]^\times = \Q^\times.
	\end{cases}
	\end{align*}
\end{conjecture}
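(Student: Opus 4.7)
The plan is to combine the character sum method systematized in this paper with a geometric parametrization of points via a universal torsor (or Cox ring, under a suitable version of the Colliot-Thélène--Sansuc formalism). For $X$ a weak Fano variety satisfying the hypotheses of the conjecture, I would first parametrize $U(\Q)_{\sB}$ outside a suitable thin set $\Omega$ by integral tuples of Cox coordinates, so that $U(\Q)_{\sB} \setminus \Omega$ is identified with a sublattice subject to coprimality conditions. The Brauer vanishing condition $b(x) = 0$ for a class $b$ of order $d$ is detectable by a character of order $d$ (a Hilbert symbol when $d = 2$, higher residue symbols otherwise), so the counting problem reduces to a sum of products of such symbols weighted by multiplicative factors $|\partial_D(\langle \sB \rangle)|^{-\omega(\cdot)}$ indexed by the divisors $D \in X^{(1)}$. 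This is a direct generalization of the setup in Section~\ref{sChar}.

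Second, I would isolate main terms along the lines of Section~\ref{sComb}. Oscillating contributions should be killed by large-sieve estimates for variables that are large and by Siegel--Walfisz-type inputs through Theorem~\ref{tKou} for small variables. The surviving contributions, classified in the quadratic case as blocking sets by Lemma~\ref{lBlock} and indexed by $\RAlt$ together with $\Jay$, must be placed in explicit bijection with the cosets $\Br_{\Sub}(X, \sB)/\Br \Q$, producing one Euler product per coset. The expected factor $\lvert \Br_{\Sub}(X, \sB)/\Br \Q \rvert$ in $c_{\textup{LRS}}$ then emerges automatically from counting these main terms, as in Lemma~\ref{t:identification}.

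The final step is to match each local Euler factor against the corresponding component of the Tamagawa measure $\tau_{\sB}$, using Hilbert reciprocity to rewrite sums of residue symbols as integrals against the $\sB$-orthogonality constraint on $X(\AAA_\Q)$. The singleton-stratum contributions should furnish the Gamma factors $\Gamma(|\partial_D(\langle \sB \rangle)|^{-1})$, the modified Fujita invariants $\eta(D)$, and the cone constant $\theta(X)$ via the asymptotic evaluation of multiplicative sums against characters of order $|\partial_D(\langle \sB \rangle)|$, playing the role of Theorem~\ref{theorem:maincounting} in this more general setting.

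The principal obstacles are twofold and are the reason the general conjecture remains open. First, for Brauer classes of order $d > 2$ the requisite character sum bounds, in particular an analogue of Lemma~\ref{lemma:LS}, are not available in the generality needed: Heath-Brown's cubic large sieve handles $d = 3$ in restricted settings, but larger $d$ and non-cyclic residue systems remain inaccessible. Second, the reduction of Brauer vanishing to a finite product of residue symbols works cleanly only when the support of $\sB$ lies in a toric-type stratum of $X$; in the present paper, the monomial hypothesis on the $M_{i,j}$ ensures that the relevant descent cohomology is trivial and keeps the coefficients of the resulting characters inside $\Q^\times$. Outside this setting, Galois descent from the splitting fields of $\sB$ introduces ramification and twisting terms which must be absorbed either into the thin set $\Omega$ or into a noncommutative refinement of the character sum method that tracks the full Galois module structure of $\Pic_{\sB}(\overline X)_\C$.
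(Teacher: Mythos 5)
The statement you were asked about is Conjecture~\ref{conj:LRS}, which the paper does not prove: it is quoted verbatim from~\cite[Conjecture~3.16]{LRS} and remains open in general. The paper only \emph{verifies} it for the specific family of products of conic bundles over $\PPP^{n-1}$ defined by~\eqref{eq:def} (Theorem~\ref{thm:main}), and the entire content of Sections~\ref{sChar}--\ref{sLeading} is that verification. Your text is therefore not a proof of the statement but a programme, and by your own admission it does not close: you explicitly flag that the large sieve input (the analogue of Lemma~\ref{lemma:LS}) is unavailable for Brauer classes of order $d>2$ or with non-cyclic residue groups, and that the reduction of the vanishing condition $b(x)=0$ to finitely many residue symbols with rational coefficients relies on a monomial/toric-type structure that a general weak Fano $X$ with arbitrary finite $\sB \subseteq \Br U$ does not have. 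Those are precisely the obstructions that keep the conjecture open, so the proposal cannot be accepted as a proof of the stated result.

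That said, as a description of \emph{how the paper handles its special case}, your outline is broadly faithful: detection of the $\sB$-vanishing locus by quadratic symbols, isolation of main terms via linked indices and blocking sets (Lemma~\ref{lBlock}), the bijection between surviving main terms and $\Br_{\Sub}(\PPP^{n-1},\sB)/\Br \Q$ (Lemma~\ref{t:identification}), and the matching of each Euler product with the local factors of the Tamagawa measure (Proposition~\ref{lTama}, Lemma~\ref{lem:2inf}) are exactly the paper's mechanism. Two points of caution if you pursue the general programme: the paper does not use a universal torsor or Cox ring parametrisation at all (the base is $\PPP^{n-1}$ and the parametrisation is elementary, via signs, $2$-adic data and the variables $v_S$), and the identification with the subordinate Brauer group is established only after a projective cancellation step (Lemma~\ref{lemma:projcancellation}) singles out the subgroup $\PAlt \subseteq \FAlt$; any generalisation would need an analogue of that step, which your sketch does not address.
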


\subsection{Comparing Brauer classes and character sums}
In this section, we will compute the various factors in the above expression. Throughout, we take $\mathscr B$ to be the set of quaternion algebras $\{(-M_{i,1}(\boldsymbol{t})M_{i,3}(\boldsymbol{t}), - M_{i,2}(\boldsymbol{t})M_{i,3}(\boldsymbol{t}))_{\Q(\boldsymbol{t})}: i=1, \ldots, m\}$ corresponding to the conics in~\eqref{eq:def}. Our first result allows us to compute the majority of the invariants in the Loughran--Rome--Sofos constant.

\begin{lemma}
\label{lem:firstconj}
We have 
\begin{enumerate}[(i)]
\item $\theta(\PPP^{n - 1}) = \frac{1}{n}$,
\item $\Gamma (\PPP^{n-1}, \sB) = \prod_i \Gamma (\vert V_{\{i\}} \vert^{-1})$,
\item $\Delta(\sB) = n - \sum_i \vert V_{\{i\}} \vert ^{-1}$,
\item $\prod_{D \in X^{(1)}} \eta(D)^{1- |\partial_D(\langle\sB\rangle)|^{-1}} = n^{\Delta(\mathscr B)}$.
\end{enumerate}
\end{lemma}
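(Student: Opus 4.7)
The strategy is to verify each of the four parts separately, relying on standard computations for $\PPP^{n-1}$ combined with one explicit identification of the residue subgroups of $\langle\sB\rangle$ at the coordinate hyperplanes.

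Part (i) follows immediately from the fact that $\rk(\Pic \PPP^{n-1}) = 1$: by the definition in~\eqref{eq:conecst}, $\theta(\PPP^{n-1}) = \alpha(\PPP^{n-1})/0! = \alpha(\PPP^{n-1})$, and Peyre's cone constant is well-known to be $1/n$ in this case (the effective cone is $\R_{\geq 0} H$ with dual $\R_{\geq 0}$, and $-K_{\PPP^{n-1}} = nH$, so $\alpha = \mathrm{vol}\{x \geq 0 : nx \leq 1\} = 1/n$).

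The heart of the proof is the residue identification required for (ii) and (iii). Since every $M_{i,j}$ is a monomial in $t_1, \dots, t_n$, each quaternion algebra in $\sB$ is unramified away from the coordinate hyperplanes $\{t_i = 0\}$, so only these $n$ divisors contribute to $\Gamma(X, \sB)$ and $\Delta(\sB)$. Fix $k \in [n]$ and let $D_k = \{t_k = 0\}$, whose function field is $\Q(t_1, \dots, \widehat{t_k}, \dots, t_n)$. Under Kummer theory, the image of $\partial_{D_k}$ on order-two classes sits inside $k(D_k)^\times/k(D_k)^{\times 2}$, which we identify with $\mathcal{P}(([n]\setminus\{k\})\cup\{-\})$ via the basis $\{-1\}\cup\{t_j : j \neq k\}$. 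Computing the tame symbol of $(-M_{i,1}M_{i,3}, -M_{i,2}M_{i,3})$ at $D_k$, a case analysis on the parity vector $(|S_{i,1} \cap \{k\}|, |S_{i,2} \cap \{k\}|, |S_{i,3} \cap \{k\}|)$ exactly reproduces the table defining $g_{i,\{k\}}$: in the $(0,0,0)$ case both valuations vanish and the residue is trivial; in the $(1,0,0)$, $(0,1,0)$ cases the residue equals (up to sign) the product of the two remaining $M$'s mod squares; and in the $(0,0,1)$ case both valuations equal one, and the tame symbol formula $(a,b) \mapsto (-1)^{v(a)v(b)} a^{v(b)}/b^{v(a)}$ yields precisely $\{-\} + S_{i,1} + S_{i,2}$. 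Pairwise disjointness of the $S_{i,j}$ guarantees that the element $k$ itself does not appear in any residue class, as required. Consequently $\partial_{D_k}(\langle \sB \rangle)$ is generated by $\{g_{i,\{k\}} : 1 \leq i \leq m\}$, which is by definition $V_{\{k\}}$. This gives $|\partial_{D_k}(\langle \sB \rangle)| = |V_{\{k\}}|$, and both (ii) and (iii) are now immediate from the definitions of $\Gamma(X,\sB)$ in the $\rk(\Pic X) = 1$ case and of $\Delta(\sB)$.

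Part (iv) is a routine consequence of (iii). From $-K_{\PPP^{n-1}} = nH$, the class $K_{\PPP^{n-1}} + t \{t_i = 0\} = (t-n) H$ is effective if and only if $t \geq n$, so $\eta(\{t_i = 0\}) = n$ for every $i$. Combining with part (iii),
\[
\prod_{D \in X^{(1)}} \eta(D)^{1 - |\partial_D(\langle\sB\rangle)|^{-1}} = \prod_{i=1}^n n^{1 - |V_{\{i\}}|^{-1}} = n^{\sum_{i=1}^n (1 - |V_{\{i\}}|^{-1})} = n^{\Delta(\sB)}.
\]
The main obstacle is the residue computation underlying (ii) and (iii), since one must carefully track the tame symbol of each quaternion algebra through the Kummer isomorphism and verify that the combinatorial recipe for $g_{i,\{k\}}$ matches in each parity case; everything else reduces to bookkeeping with standard facts about $\PPP^{n-1}$.
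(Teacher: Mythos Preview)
Your proof is correct and follows essentially the same approach as the paper: both identify $\partial_{D_k}(\langle\sB\rangle)$ with $V_{\{k\}}$ by computing the residue of each generating quaternion algebra and matching it to $g_{i,\{k\}}$, then read off (ii)--(iv) from this identification together with standard facts about $\PPP^{n-1}$. You supply the tame-symbol case analysis explicitly (correctly using pairwise coprimality of the $M_{i,j}$ to reduce to the four parity vectors with at most one entry equal to $1$), whereas the paper simply cites a standard reference for the residue formula; otherwise the arguments coincide.
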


\begin{proof}
The first point is immediate from~\eqref{eq:conecst}.
As before, we will call upon the analogy between monomials in $\Q(t_1, \ldots, t_n)$ and subsets of $[n]$, so we may view $\sB$ as the set of classes $(-S_{i,1}S_{i,3}, - S_{i,2}S_{i,3})_{\Q(\boldsymbol{t})}$. In order to compute many of the invariants in the leading constant, we need to understand the residues of the Brauer classes associated to the conics along codimension one points. 

Since the coefficients of the conics are all monomials, the Brauer classes can only ramify at the coordinate axes $D_j \coloneqq \{t_j = 0\}$. The residue at $D_j$ of the symbol $(-S_{i,1}S_{i,3}, - S_{i,2}S_{i,3})_{\Q(\boldsymbol{t})}$ can be easily calculated (see e.g.~\cite[Equation~1.16]{Brauerbook}) and is under our identifications precisely equal to $g_{i, \{j\}}$ as defined in Section~\ref{sec:indicator}. Therefore, the set generated by the residues of the defining Brauer classes along the divisors $D_j$ is 
\begin{equation}
\label{eq:residueVi}
\partial_{D_j} ( \langle \sB \rangle) = \langle g_{i, \{j\}} : i =1, \ldots, n \rangle = V_{\{j\}},
\end{equation} 
where we have implicitly identified the residues with subsets of $[n] \cup \{-\}$. From this, part $(ii)$ immediately follows and taking the sum, we get part $(iii)$
\[
\Delta(\sB) = \sum_{j = 1}^n \left( 1 - \left \vert\partial_{D_j} ( \langle \sB \rangle)\right \vert^{-1} \right) = n - \sum_{j=1}^n \vert V_{\{j\}} \vert^{-1},
\]
as desired. Finally, we have $\eta(D_j) = n$ for each $j$ and from this and the previous result, part $(iv)$ follows. 
\end{proof}

This lemma tells us that the order of magnitude of our leading term is correct, as well as the Gamma factors, since $\rk(\Pic \PPP^{n - 1}) = 1$. 

Next, we need to understand the subordinate Brauer group. We write $\langle S, T \rangle$ for the cardinality of $S \cap T$ modulo $2$. Let $\PAlt$ be the subset of $f \in \FAlt$ satisfying
\begin{equation}
\label{eq:weirdcondition}
\sum_{\{i, j\} \subseteq [n]} \langle \{i\}, f(\{j\}) \rangle + \sum_{k \in [n]} \langle \{-\}, f(\{k\}) \rangle = 0.
\end{equation}
Using Lemma~\ref{lemma:projqs} with $S = [n]$, we can readily verify that~\eqref{eq:Jeqcancel} and~\eqref{eq:weirdcondition} are the negation of each other. The relevance of $\FAlt$ is that we have initially started with a general affine problem. However, in the projective setting, as we have seen in Lemma~\ref{lemma:projcancellation}, main terms cancel unless the condition~\eqref{eq:weirdcondition} holds, so the relevant subordinate Brauer group is $\PAlt$.

\begin{lemma}
\label{t:identification}
There is a bijection between elements of $\Br_{\Sub}(\PPP^{n - 1},\sB)/\Br \Q$ and $\PAlt$.
\end{lemma}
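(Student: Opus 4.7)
My strategy is to define an explicit map $\Phi: \PAlt \to \Br_{\Sub}(\PPP^{n-1}, \sB)/\Br \Q$ via the twist function from equation~\eqref{eq:Tdef}. For $f \in \PAlt$, writing Hilbert symbols additively, set
\[
\alpha_f \coloneqq \sum_{i \in [n]} |\{-\} \cap f(\{i\})| \cdot (-1, t_i) + \sum_{\{i, j\} \subseteq [n]} |\{j\} \cap f(\{i\})| \cdot (t_i, t_j) \in \Br \Q(t_1, \dots, t_n).
\]
The alternating property of $f$ makes the exponent in the second sum independent of the ordering of $\{i,j\}$, and since an element of $\FAlt$ is determined by its singleton values $f(\{i\}) \in V_{\{i\}}$, the class $\alpha_f$ depends only on the equivalence class of $f$. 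I would then prove $\Phi$ is a well-defined bijection.

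\textbf{Well-definedness.} The tame symbol formula gives the residue of $\alpha_f$ at the coordinate divisor $D_k = \{t_k = 0\}$ as
\[
\partial_{D_k}(\alpha_f) = (-1)^{|\{-\} \cap f(\{k\})|} \prod_{i \neq k} t_i^{|\{i\} \cap f(\{k\})|},
\]
which under the identification $\{-\} \leftrightarrow -1$, $i \leftrightarrow t_i$ is precisely $f(\{k\}) \in V_{\{k\}} = \partial_{D_k}(\langle \sB \rangle)$ by~\eqref{eq:residueVi}. Since $\alpha_f$ is unramified outside the coordinate hyperplanes, the subordination condition is satisfied provided $\alpha_f$ descends from $\AAA^n$ to $\PPP^{n-1}$. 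For the descent, invariance of $\alpha_f$ under diagonal scaling $\sigma_\lambda: t_i \mapsto \lambda t_i$ must be established. A direct computation using bimultiplicativity and the identity $(\lambda, \lambda) = (-1, \lambda)$ gives
\[
\sigma_\lambda(\alpha_f) - \alpha_f = \epsilon \cdot (-1, \lambda) + \sum_k b_k \cdot (\lambda, t_k),
\]
where $\epsilon$ is the left-hand side of~\eqref{eq:weirdcondition} and $b_k \equiv |f(\{k\}) \cap [n]| \bmod 2$. The equal-degree hypothesis on $M_{i, 1}, M_{i, 2}, M_{i, 3}$ (compare Lemma~\ref{lemma:projcompare}) forces $|g_{j, \{k\}} \cap [n]| \equiv 0 \bmod 2$ for every $j$, hence $b_k \equiv 0$ for any $f(\{k\}) \in V_{\{k\}}$; the remaining term $\epsilon \cdot (-1, \lambda)$ vanishes for all $\lambda \in \Q^\times$ precisely when $\epsilon \equiv 0 \bmod 2$, i.e., when $f \in \PAlt$. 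Thus $\Phi(f)$ is a well-defined element of $\Br_{\Sub}(\PPP^{n-1}, \sB)/\Br \Q$.

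\textbf{Bijection.} Injectivity is immediate: if $\Phi(f) \equiv 0$ modulo $\Br \Q$, then all residues vanish, forcing $f(\{k\}) = \varnothing$ for every $k$ and hence $f = 0$ in $\FAlt$. For surjectivity, given $\beta \in \Br_{\Sub}(\PPP^{n-1}, \sB)/\Br \Q$, define $f(\{k\}) \coloneqq \partial_{D_k}(\beta) \in V_{\{k\}}$. The required alternating property follows from Faddeev-style reciprocity for iterated residues at the codimension-two strata $D_i \cap D_j$; equivalently, the $2$-torsion of $\Br \Q(t_1, \dots, t_n)/\Br \Q$ unramified away from the coordinate hyperplanes is freely generated by the symbols $(-1, t_k)$ and $(t_i, t_j)$ with $i < j$, so the residues of any such class automatically satisfy the alternating relations. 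The condition $|\{k\} \cap f(\{k\})| = 0$ is automatic since $\partial_{D_k}(\beta)$ does not involve $t_k$. Extending $f$ linearly to $\mathcal{P}([n])$ produces an element of $\FAlt$, and reversing the descent computation forces $f \in \PAlt$. Since $\Phi(f)$ and $\beta$ share all residues at every codimension-one point, their difference lies in $\Br \PPP^{n-1} = \Br \Q$ by Grothendieck purity and the rationality of $\PPP^{n-1}$, so $\beta = \Phi(f)$ in the quotient.

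The principal obstacle is verifying the alternating property in the surjectivity step. The cleanest approach is to exhibit a concrete presentation of the unramified-off-coordinate-hyperplanes subgroup of $\Br \Q(t_1, \dots, t_n)/\Br \Q$ via the symbols $(-1, t_k)$ and $(t_i, t_j)$, in which the alternating relations are built into the generators; every other verification then reduces to routine tame-symbol bookkeeping.
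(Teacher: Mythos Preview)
Your approach is structurally the same as the paper's: construct an explicit map $f \mapsto \alpha_f$ via Hilbert symbols, read off its inverse via residues, and check the $\PAlt$ condition is exactly what makes the class live on $\PPP^{n-1}$. The paper breaks the symmetry by working with the generators $(t_i/t_n,t_j/t_n)$ and $(-1,t_k/t_n)$ of $\Br \Q(\PPP^{n-1})$, so that condition~\eqref{eq:weirdcondition} appears as the constraint that the residue at $\{t_n=0\}$ land in $V_{\{n\}}$; you instead work in $\Br \Q(\AAA^n)$ and recover~\eqref{eq:weirdcondition} as the obstruction to scaling invariance. Your route is cleaner in that it treats all coordinates on the same footing, and it makes the link to $\Tw_v$ transparent.

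Two imprecisions to tighten. First, invariance under $\sigma_\lambda$ for $\lambda\in\Q^\times$ is not by itself the same as descent to $\Q(\PPP^{n-1})$; what you actually use is that your identity $\sigma_\lambda(\alpha_f)-\alpha_f=\epsilon(-1,\lambda)+\sum_k b_k(\lambda,t_k)$ is formal and therefore holds with $\lambda$ a variable, so when $\epsilon=b_k=0$ one may specialise $\lambda=t_n^{-1}$ and see $\alpha_f$ rewritten in the $t_i/t_n$. Second, your presentation claim for the $2$-torsion of $\Br \Q(t_1,\dots,t_n)/\Br\Q$ unramified off the coordinate hyperplanes is false as stated (e.g.\ $(2,t_1)$ is not in the span of your generators); you need the extra hypothesis, automatic for subordinate classes, that each residue lies in $\langle -1,t_1,\dots,t_n\rangle$. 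With that restriction the iterated-residue argument at $D_i\cap D_j$ you sketch does give the alternating relation, exactly as the paper obtains it by writing $\alpha$ explicitly in the $(t_i/t_n,t_j/t_n)$ basis and reading off $e_{ij}=e_{ji}$.
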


\begin{proof}
We will demonstrate an injective map from $\Br_{\Sub}(\PPP^{n - 1}, \sB)$ to $\PAlt$ together with a section from $\PAlt$, thus establishing the claim. 

In the first direction, consider an element $\alpha \in \Br_{\Sub}(\PPP^{n - 1}, \sB)$. By definition of subordinance, $\partial_D(\alpha) \in\partial_D\left( \langle\sB \rangle\right)$ for any codimension one point $D$. If $D$ is not a component of $\{t_1 \cdots t_n =0\}$, then $\partial_D\left(\langle\sB \rangle\right)$ only contains 0. Hence, $\alpha$ can only ramify along the coordinate axes. Suppose $D$ is one of these axes and write $E = f^{-1}(D)$ for the inverse image under the fibration map $f: V_1 \times \cdots \times V_m \rightarrow \PPP^{n - 1}$, where the $V_i$ are the conic bundles associated to the classes in $\sB$ and where $\times$ denotes the fibre product over $\PPP^{n - 1}$. Then $E = E_1 \times \cdots \times E_m$ and the residue field $\kappa_E$ is the compositum of the residue fields $\kappa_{E_i}$. Each $E_i$ corresponds either to a smooth conic over the point $D$, in which case $\kappa_{E_i}$ equals $\Q(D)$ or else a pair of lines in which case $\kappa_{E_i}$ is a quadratic extension of $\Q(D)$. Hence, $\kappa_E$ is a multiquadratic extension and combined with~\cite[Lemma~2.4]{LRS} and~\cite[Lemma~10.1.3]{Brauerbook}, we conclude that $\alpha$ must be 2-torsion.

This means that $\alpha$ can be written as a linear combination of quaternion algebras of the form $(t_i/t_n, t_j/t_n)$ or $(c, t_i/t_n)$ for $c \in \Q^\times/\Q^{\times 2}$, since these generate the non-trivial 2-torsion elements in $\Br(\Q(\PPP^{n - 1}))$ with non-trivial ramification only above $\{t_1\cdots t_n = 0\}$. Note that only the scalar $c=-1$ is relevant here as this is the only scalar that can appear in the residues since the $M_{i, j}$ are monic.

We will associate $\alpha$ to the map $f_\alpha$ which we define as follows. Define $f_{\alpha}: \mathcal{P}([n]) \rightarrow \mathcal{P}([n] \cup \{-\})$ through
\[
\partial_{t_i} \alpha = \prod_{j\in f_{\alpha}(\{i\})} t_j,
\]
where $t_{\{-\}} = -1$ by convention. We claim that this gives rise to a unique element of $\FAlt$.

We now check the alternating properties. Firstly, it is an immediate consequence of the definition that $t_i \nmid \partial_{t_i} \alpha$ and hence $\{i\} \cap f_\alpha(\{i\}) = \varnothing$. Observe that by construction 
\[
\vert \{i\} \cap f_\alpha(\{j\})\vert
= \begin{cases}
 \hfil 1 &\text{if } t_i \mid \partial_{t_j} \alpha\\
 \hfil 0 &\text{otherwise.}
\end{cases}
\] 
Write $\alpha = \sum_{i < j} e_{i, j} (t_i/t_n, t_j/t_n) + \sum_k c_k (-1,t_k/t_n)$ and $e_{i, j} \coloneqq e_{j, i}$ for $i > j$. We compute
$$
\partial_{t_j} \alpha = 
\begin{cases} \hfil 
(-1)^{c_j}t_n^{\sum_{i \in [n - 1] - \{j\}} e_{i, j}} \prod_{i \in [n - 1]-\{ j\}} t_i^{e_{i,j}} &\text{if }j \in [n - 1]\\
\prod_{\{i_1, i_2\} \subseteq [n - 1]} (-t_{i_1} t_{i_2})^{e_{i_1, i_2}} \times \prod_{k \in [n - 1]} (-1)^{c_k} &\text{if }j = n.
\end{cases}
$$
Hence, for $\{i, j\} \subseteq [n - 1]$, we have
$$
t_i \mid \partial_{t_j} \alpha \iff e_{i, j} \neq 0 \iff t_j \mid \partial_{t_i} \alpha.
$$ 
For $i=n$, we deduce similarly
$$
t_n \mid \partial_{t_j} \alpha \iff \sum_{i \in [n - 1] - \{j\}} e_{i,j}\equiv 1\bmod 2 \iff t_j \mid \partial_{t_n} \alpha.
$$ 
Therefore $\langle \{i\}, f_\alpha(\{j\}) \rangle = \langle \{j\}, f_\alpha(\{i\}) \rangle$. By~\eqref{eq:residueVi} and by construction of $f_\alpha$, we have that 
$$
f_\alpha(\{i\}) \in V_{\{i\}}
$$ 
for all $i \in [n]$, establishing the claim. Finally, we must check the last condition~\eqref{eq:weirdcondition}. 

We observe that since $\alpha$ is subordinate, we must have 
\[
\partial_{t_n} \alpha = \prod_{\{i, j\} \subseteq [n - 1]} (-t_i t_j)^{e_{i, j}} \times \prod_{k \in [n - 1]} (-1)^{c_k} \in \partial_{t_n}(\langle \sB \rangle).
\] 
Translating this into the language of subsets, the $i$ for which $t_i$ arises in this residue are precisely those for which $\sum_{j \in [n - 1] - \{i\}} e_{i, j}$ is odd and the exponent of $-1$ in the residue is precisely $\sum_{\{i, j\}\subseteq [n - 1]} e_{i, j} + \sum_{k \in [n - 1]} c_k$. Observe that
\[ \langle \{-\}, f(\{n\}) \rangle 
=\sum_{\{i, j\}\subseteq [n - 1]} e_{i, j} + \sum_{k \in [n - 1]} c_k
=
\sum_{\{i, j\} \subseteq [n-1]} \langle \{i\}, f(\{j\}) \rangle + \sum_{k \in [n-1]} \langle \{-\}, f(\{k\}) \rangle .
\]
Finally, we see that
\[
\sum_{i \in [n - 1]} \langle \{i\}, f(\{n\}) \rangle = \sum_{\substack{i, j \in [n - 1] \\ i \neq j}} e_{i, j} = 0. 
\]
Summing these two equations, we readily verify~\eqref{eq:weirdcondition}.

Thus we have found an homomorphism $\Br_{\Sub}(\PPP^{n - 1}, \sB) \rightarrow \PAlt$ by sending $\alpha$ to $f_{\alpha}$. Let $\alpha \in \Br_{\Sub}(\PPP^{n-1}, \sB)$ and suppose that $f_{\alpha}$ is the trivial element of $\PAlt$. Then $f_\alpha(\{i\}) = \varnothing$ and thus $\partial_{t_i}(\alpha) = 1$ for all $i \in [n-1]$. We conclude that $\alpha$ can ramify at most at $\{t_n=0\}$ and thus $\alpha \in \Br(\PPP^{n - 1}/\{t_n = 0\}) = \Br(\AAA^{n - 1}) = \Br \Q$. Hence we get an injective map from $\Br_{\Sub}(\PPP^{n-1}, \sB)/\Br \Q$ to $\PAlt$.

For the reverse direction, let $f \in \PAlt$ and define 
\[
\alpha_f = \sum_{\{i,j\}\subseteq [n - 1]} \langle \{i\}, f(\{j\}) \rangle (t_i/t_n, t_j/t_n) + \sum_{k\in[n-1]}\langle \{-\}, f(\{k\}) \rangle (-1, t_k/t_n) ,
\] 
where we recall that the inner product is the size of the intersection modulo 2. Since it is a sum of quaternion algebras which live in $\Br(\Q(\PPP^{n - 1}))$, so does $\alpha_f$. To see that $\alpha_f$ is subordinate, we compute the residue along $\{t_r = 0\}$ for $r \in [n - 1]$
\begin{align*}
\partial_{t_r} \alpha_f 
&= (-1)^{\langle \{-\}, f(\{r\}) \rangle} t_n^{\sum_{i \in [n - 1]} \langle \{i\}, f(\{r\})\rangle} \prod_{i \in [n - 1]} t_i^{\langle \{i\}, f(\{r\}) \rangle} \\
&= (-1)^{\langle \{-\}, f(\{r\}) \rangle} t_n^{\sum_{i \in [n]} \langle \{i\}, f(\{r\})\rangle} \prod_{i \in f(\{r\})} t_i.
\end{align*}
Since $f(\{r\}) \in V_{\{r\}}$ and since the generators $g_{i, \{r\}}$ of $V_{\{r\}}$ satisfy $|g_{i, \{r\}} \cap [n]| \equiv 0 \bmod 2$, we have the identity $\sum_{i \in [n]} \langle \{i\}, f(\{r\}) \rangle = 0$, which yields
\[
\partial_{t_r} \alpha_f = (-1)^{\langle \{-\}, f(\{r\}) \rangle} \prod_{i \in f(\{r\})} t_i.
\]
The final thing to check is the residue along the divisor $\{t_n = 0\}$, which is given by
\begin{align*}
\partial_{t_n}\alpha_f&=
\prod_{\{i, j\} \subseteq [n - 1]} (-t_it_j)^{\langle \{i\}, f(\{j\}) \rangle} \times \prod_{k \in [n - 1]} (-1)^{\langle \{-\}, f(\{k\})\rangle} \\
&= (-1)^{\sum_{\{i, j\} \subseteq [n - 1]} \langle \{i\}, f(\{j\}) \rangle + \sum_{k \in [n - 1]} \langle \{-\}, f(\{k\}) \rangle} \prod_{\{i, j\} \subseteq [n - 1]} (t_it_j)^{\langle \{i\}, f(\{j\}) \rangle},
\end{align*} 
where the product is understood to take place modulo squares. We deduce from~\eqref{eq:weirdcondition} that
\begin{align*}
\partial_{t_n}\alpha_f&= (-1)^{ \langle\{-\}, f(\{n\})\rangle + \sum_{i \in [n - 1]} \langle \{i\}, f(\{n\}) \rangle } \prod_{\{i, j\} \subseteq [n - 1]} (t_it_j)^{\langle \{i\}, f(\{j\}) \rangle}.
\end{align*} 
Since $f(\{n\}) \in V_{\{n\}}$ and since the generators $g_{i, \{n\}}$ of $V_{\{n\}}$ satisfy $|g_{i, \{n\}} \cap [n]| \equiv 0 \bmod 2$ (and $n \not \in g_{i, \{n\}}$), we continue rewriting this as
$$
\partial_{t_n}\alpha_f = (-1)^{\langle\{-\}, f(\{n\})\rangle } \prod_{\{i, j\} \subseteq [n - 1]} (t_it_j)^{\langle \{i\}, f(\{j\}) \rangle}.
$$
Finally, we observe that 
$$
\prod_{\{i, j\} \subseteq [n - 1]} (t_it_j)^{\langle \{i\}, f(\{j\}) \rangle} \prod_{j \in [n]} t_j^{\langle \{n\}, f(\{j\}) \rangle} = \prod_{j \in [n - 1]} t_j^{\langle [n], f(\{j\}) \rangle}
$$
is a square using once more that the generators $g_{i, \{j\}}$ of $V_{\{j\}}$ satisfy $|g_{i, \{j\}} \cap [n]| \equiv 0 \bmod 2$. We conclude that
$$
\partial_{t_n}\alpha_f = (-1)^{\langle\{-\}, f(\{n\})\rangle} \prod_{j \in [n]} t_j^{\langle \{n\}, f(\{j\}) \rangle} = (-1)^{\langle\{-\}, f(\{n\})\rangle} \prod_{i \in [n]} t_i^{\langle \{i\}, f(\{n\}) \rangle},
$$
so $\partial_{t_n} \alpha_f = f(\{n\}) \in V_{\{n\}}$. Therefore, for $f\in \PAlt$, we conclude that $\alpha_f \in \Br_{\Sub}(\PPP^{n - 1}, \sB)$. Moreover, $f_{\alpha_f} = f$ so we have a section $s: \PAlt \rightarrow \Br_{\Sub}(\PPP^{n - 1}, \sB)$ of our assignment $\Br_{\Sub}(\PPP^{n - 1}, \sB) \rightarrow \PAlt$. The claimed isomorphism follows.
\end{proof}

We have formed a bridge between the (representatives for) the non-constant classes in the subordinate Brauer group and elements of $\FAlt$ satisfying condition~\eqref{eq:weirdcondition}. In the remainder of this subsection, we will see how that connection allows us to compute the Tamagawa measure. Recall that we may parametrise the elements of $\Z_ p^n$ via
\[ 
t_i = p^{\lambda_i} u_i,
\]
where $\lambda_i \in \Z_{\geq 0}$ and $u_i$ a unit.

\begin{lemma}
\label{lemma:HScond1}
Let $p$ be an odd prime. Let $\varnothing \subseteq T \subset [n]$. Let $t_i = p^{\lambda_i} u_i$. Assume that $T$ equals $\{i \in [n] : \lambda_i \equiv 1 \bmod 2\}$. Then we have the identity of Hilbert symbols
\begin{equation}
\label{eq:titj}
\prod_{\{i, j\} \subseteq [n]} (t_i, t_j)_p^{|\{i\} \cap f(\{j\})|} = \prod_{\{i, j\} \subseteq T} (-1, p)_p^{|\{i\} \cap f(\{j\})|} \prod_{i \in [n]} (u_i, p)_p^{|\{i\} \cap f(T)|}.
\end{equation}
\end{lemma}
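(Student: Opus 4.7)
The plan is to expand the left-hand side using the bimultiplicativity of the Hilbert symbol, reduce the various factors using standard identities at an odd prime $p$, and then identify the resulting exponents with those on the right-hand side via the defining properties of $\OBMS$.

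First I would write $(t_i, t_j)_p = (p^{\lambda_i} u_i,\ p^{\lambda_j} u_j)_p$ and expand as
\[
(t_i, t_j)_p = (p,p)_p^{\lambda_i \lambda_j}\, (p, u_j)_p^{\lambda_i}\, (u_i, p)_p^{\lambda_j}\, (u_i, u_j)_p.
\]
Since $p$ is odd, the tame symbol vanishes on pairs of units, so $(u_i, u_j)_p = 1$, and one has the standard identity $(p, p)_p = (-1, p)_p$. Raising to the power $|\{i\} \cap f(\{j\})|$ and multiplying over unordered pairs, the proof then reduces to bookkeeping of two exponents: the $(-1,p)_p$ contributions and the $(u_k, p)_p$ contributions for each $k \in [n]$.

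For the $(-1, p)_p$ part, the exponent is $\sum_{\{i,j\} \subseteq [n]} \lambda_i \lambda_j |\{i\} \cap f(\{j\})|$, and since $\lambda_i \lambda_j$ is odd precisely when both $i, j \in T$, this reduces modulo $2$ to $\sum_{\{i,j\} \subseteq T} |\{i\} \cap f(\{j\})|$, which is exactly the first factor on the right. For the $(u_k, p)_p$ part, I would collect for each fixed $k \in [n]$ the contributions coming from pairs $\{i,j\}$ with $k \in \{i,j\}$, obtaining
\[
(u_k, p)_p^{\sum_{j \neq k} \lambda_j |\{k\} \cap f(\{j\})|}.
\]
Modulo $2$, the exponent equals $\sum_{j \in T} |\{k\} \cap f(\{j\})|$, and using the linearity of $\pi \circ f$ together with $\{k\} \cap f(\{k\}) = \varnothing$, this in turn equals $|\{k\} \cap f(T)|$, matching the right-hand side.

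The main point requiring a little care (rather than a genuine obstacle) is the last identification: strictly speaking $f(T)$ lives in $\mathcal{P}([n] \cup \{-\})$, so one must invoke that $\pi \circ f$ is linear to write $\pi(f(T)) = \sum_{j \in T} \pi(f(\{j\}))$ as a symmetric difference, and then observe that for $k \in [n]$ the membership $k \in f(T)$ is the same as $k \in \pi(f(T))$. Once this is in place, collecting the three identifications assembles into the claimed equality.
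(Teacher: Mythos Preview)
Your proof is correct and follows essentially the same approach as the paper. The only organizational difference is that the paper performs a four-case split on the pair $\{i,j\}$ according to membership in $T$ before multiplying, whereas you expand $(p^{\lambda_i}u_i,p^{\lambda_j}u_j)_p$ bimultiplicatively and collect exponents afterwards; the underlying identities ($(u_i,u_j)_p=1$, $(p,p)_p=(-1,p)_p$, the alternating property, and linearity of $\pi\circ f$) are the same in both.
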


\begin{proof}
The Hilbert symbol $(a, b)_p$ is trivial if $p \nmid ab$. Therefore we have for $i \neq j$
\begin{equation}
\label{eq:localtij} (t_i, t_j)_p = 
\begin{cases}
\hfil (-u_iu_j, p)_p&\text{if }i \in T\text{ and }j \in T,\\
\hfil (p, u_j)_p&\text{if }i \in T\text{ and }j\notin T,\\
\hfil (u_i, p)_p&\text{if }i\notin T\text{ and }j \in T,\\
\hfil 1&\text{if }i\notin T\text{ and }j\notin T.\\
\end{cases}
\end{equation}
We apply~\eqref{eq:localtij} to our product of local Hilbert symbols in order to calculate that
\begin{align*}
\prod_{\{i, j\} \subseteq [n]} (t_i, t_j)_p^{|\{i\} \cap f(\{j\})|} &=
\prod_{\{i, j\} \subseteq T}(-u_iu_j, p)_p^{|\{i\} \cap f(\{j\})|} \cdot \prod_{i \notin T} \prod_{j \in T} (u_i, p)_p^{|\{i\} \cap f(\{j\})|}\\
&= \prod_{\{i, j\} \subseteq T} (-1, p)_p^{|\{i\} \cap f(\{j\})|} \cdot \prod_{i \in T} (u_i, p)_p^{|\{i\} \cap f(T)|}\cdot \prod_{i\notin T} (u_i, p)_p^{|\{i\} \cap f(T)|}\\
&= \prod_{\{i, j\} \subseteq T} (-1, p)_p^{|\{i\} \cap f(\{j\})|} \prod_{i \in [n]} (u_i, p)_p^{|\{i\} \cap f(T)|},
\end{align*}
which is exactly equation~\eqref{eq:titj}.
\end{proof}

\begin{lemma}
Given $f \in \PAlt$, let $\alpha_f$ be the corresponding element in $\Br_{\Sub}(\PPP^{n - 1}, \sB)/\Br \Q$ under the bijection given by Lemma~\ref{t:identification}. Then
$$
\tau_{\sB}\left(\PPP^{n - 1}(\AAA_\Q)_{\sB}^{\Br_{\Sub}(\PPP^{n - 1}, \sB)}\right)
=
\frac{1}{\lvert\Br_{\Sub}(\PPP^{n - 1}, \sB)/\Br \Q \rvert} \sum_{f\in\PAlt} \prod_v \sigma_v(f),
$$
where the product runs over all places $v$ of $\Q$, where $e(z) \coloneqq e^{2\pi i z}$ and where
\[
\sigma_v(f) \coloneqq \lambda_v^{-1}\int_{\PPP^{n - 1}(\Q_v)} e\left(\inv_v \alpha_f(x)\right) \mathrm{d}x \, \textup{ and } \, \lambda_v \coloneqq
\begin{cases}
\left( 1 - v^{-1}\right)^{n - 1 - \sum_i \vert V_{\{i\}} \vert^{-1}} &\textup{if $v$ is finite},\\
\hfil 1 &\textup{if $v=\infty$}.
\end{cases}
\]
\end{lemma}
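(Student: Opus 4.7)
The plan is to use orthogonality of characters to detect the Brauer--Manin condition, decompose the resulting global integral as a product of local integrals, and then invoke the bijection of Lemma~\ref{t:identification} to reindex the sum. I begin by observing that Lemma~\ref{t:identification} identifies the group $G \coloneqq \Br_{\Sub}(\PPP^{n-1}, \sB)/\Br \Q$ with the $\FF_2$-vector space $\PAlt$, hence $G$ is finite and $2$-torsion. Global reciprocity (the exactness of $\Br \Q \hookrightarrow \bigoplus_v \Br \Q_v \to \Q/\Z$) guarantees that for each $x \in \PPP^{n-1}(\AAA_\Q)_\sB$ the adelic invariant sum $\sum_v \inv_v \alpha(x_v)$ depends only on the class $\alpha \bmod \Br \Q$ and takes values in $\tfrac12 \Z/\Z$. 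Orthogonality of characters therefore yields
$$
\mathbf{1}_{x \in \PPP^{n-1}(\AAA_\Q)^{\Br_{\Sub}(\PPP^{n-1}, \sB)}} \;=\; \frac{1}{|G|} \sum_{\alpha \in G} e\!\left(\sum_v \inv_v \alpha(x_v)\right).
$$

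Next I would unpack the definition of the Tamagawa measure. For $X = \PPP^{n-1}$ the Picard group has trivial Galois action and the coordinate divisors $D_i = \{t_i = 0\}$ all have residue field $\Q$, so the virtual Artin representation reduces to $\Pic_\sB(\overline X)_\C = (1 - \Delta(\sB))\cdot \C$ and $L(\Pic_\sB(\overline X)_\C, s) = \zeta(s)^{1 - \Delta(\sB)}$. Interpreting $\Res_{s=1}$ in the generalised sense suitable for fractional-order poles, the leading coefficient is $1$ since $\zeta(s) = (s-1)^{-1}(1 + O(s-1))$. Consequently the residue prefactor in $\tau_\sB$ collapses and
$$
\tau_\sB\!\left(\PPP^{n-1}(\AAA_\Q)_\sB^{\Br_{\Sub}(\PPP^{n-1}, \sB)}\right) \;=\; \int_{\PPP^{n-1}(\AAA_\Q)_\sB} \mathbf{1}_{x \in \PPP^{n-1}(\AAA_\Q)^{\Br_{\Sub}}} \prod_v \lambda_v^{-1} d\omega_v.
$$

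Substituting the character expansion into this integral, interchanging the finite sum with the integral, and factoring $\PPP^{n-1}(\AAA_\Q)_\sB = \prod_v \PPP^{n-1}(\Q_v)_\sB$ produces
$$
\tau_\sB\!\left(\PPP^{n-1}(\AAA_\Q)_\sB^{\Br_{\Sub}}\right) \;=\; \frac{1}{|G|} \sum_{\alpha \in G} \prod_v \lambda_v^{-1} \int_{\PPP^{n-1}(\Q_v)_\sB} e\!\left(\inv_v \alpha(x_v)\right) d\omega_v.
$$
Reindexing the sum over $\alpha \in G$ as a sum over $f \in \PAlt$ via the bijection $f \leftrightarrow \alpha_f$ from Lemma~\ref{t:identification}, and extending the local integration domain from $\PPP^{n-1}(\Q_v)_\sB$ to $\PPP^{n-1}(\Q_v)$ using the convention stated at the beginning of the section (namely $b(v) \neq 0$ on the ramification locus, so $e(\inv_v \alpha_f(x))$ is declared to vanish where $\alpha_f$ fails to be defined), one recognises the inner integral as exactly $\sigma_v(f)$.

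The main obstacle is the last step: justifying the passage from $\PPP^{n-1}(\Q_v)_\sB$ to $\PPP^{n-1}(\Q_v)$ in the local integral. This requires verifying that a subordinate class $\alpha_f$ admits a well-defined evaluation on $\PPP^{n-1}(\Q_v)_\sB$ (so that $\inv_v \alpha_f(x_v)$ makes sense in the product formula), and that the boundary contribution arising from the ramification locus is consistent with the convention extending the integrand by zero, without destroying local-global compatibility of the Brauer--Manin pairing. A secondary technical point is the careful handling of the fractional-exponent $\zeta(s)^{1-\Delta(\sB)}$ at $s=1$, which must be reconciled with the definition of $\tau_\sB$ in~\cite{LRS}.
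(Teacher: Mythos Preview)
Your outline is correct and follows the standard character-orthogonality argument for expressing the Tamagawa measure of a Brauer--Manin set as a sum of Fourier-type local integrals over the relevant Brauer group. The paper's own proof, however, consists of a single sentence: it cites \cite[Lemma~3.13]{LRS} and stops. So you have reconstructed (with appropriate care about the residue prefactor and the integration domain) the argument that the paper simply imports from the reference; there is no substantive difference in approach, only in the level of detail presented.
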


\begin{proof}
See \cite[Lemma~3.13]{LRS}.
\end{proof}

\subsection{Computation of local factors}
\begin{lemma}
\label{lemma:invtoT}
Let $f\in\PAlt$ and $\alpha\in\Br_{\Sub}(\PPP^{n - 1}, \sB)/\Br \Q$ be corresponding elements under Lemma~\ref{t:identification}. Suppose $x\in\PPP^{n - 1}(\Q_v)_{\sB}$. Let $\mathbf{x}$ be any lift of $x$ to the affine cone. Then 
\[
\inv_v\alpha(x)
=\begin{cases}
\hfil 0&\textup{if }\Tw_v(f,\mathbf{x})=1,\\
\hfil \frac{1}{2}&\textup{if }\Tw_v(f,\mathbf{x})=-1.
\end{cases}
\]
\end{lemma}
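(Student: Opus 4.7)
\medskip
\noindent\textbf{Proof plan.}
Since $\inv_v$ is a group homomorphism $\Br \Q_v \to \Q/\Z$ whose restriction to quaternion algebras lands in $\{0, \tfrac{1}{2}\}$, the statement is equivalent to the identity of $\{\pm 1\}$-valued expressions
\[
e\bigl(\inv_v \alpha(x)\bigr) \;=\; \Tw_v(f,\mathbf{x}),
\]
where $e(z) = e^{2\pi i z}$. The plan is to substitute the explicit formula for $\alpha_f$ constructed in the proof of Lemma~\ref{t:identification}, namely
\[
\alpha_f \;=\; \sum_{\{i,j\}\subseteq [n-1]} \langle \{i\}, f(\{j\})\rangle (t_i/t_n, t_j/t_n) \;+\; \sum_{k\in[n-1]} \langle \{-\}, f(\{k\})\rangle (-1, t_k/t_n),
\]
into the left hand side, so that $e(\inv_v \alpha_f(x))$ becomes a product of Hilbert symbols. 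Reducing to the case $x_n \neq 0$ (both sides are clearly independent of the chosen affine lift $\mathbf{x}$: for $\Tw_v$ this is Lemma~\ref{lemma:projcompare} combined with the defining condition~\eqref{eq:weirdcondition} of $\PAlt$), this gives a concrete combinatorial identity to verify.

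The main computational step is to expand $(x_i/x_n, x_j/x_n)_v$ and $(-1, x_k/x_n)_v$ by bilinearity of the Hilbert symbol (using $(a,a)_v = (-1,a)_v$) and to regroup according to the four types of symbols appearing in $\Tw_v$. Writing $\epsilon_{ij} = \langle \{i\}, f(\{j\})\rangle$ and $c_k = \langle \{-\}, f(\{k\})\rangle$, the symmetry $\epsilon_{ij} = \epsilon_{ji}$ coming from $f \in \OBMS$ will let me collapse the cross terms into the form $\prod_{i \in [n-1]}(x_i, x_n)_v^{\sum_{j \in [n-1], j \neq i}\epsilon_{ij}}$. Here the crucial simplification is the congruence $\sum_{j \in [n-1], j \neq i}\epsilon_{ij} \equiv \epsilon_{in} \pmod{2}$, which follows from $|f(\{i\}) \cap [n]| \equiv 0 \pmod{2}$; this in turn is a consequence of $f(\{i\}) \in V_{\{i\}}$, since the generators $g_{k,\{i\}}$ of $V_{\{i\}}$ all satisfy $|g_{k,\{i\}} \cap [n]| \equiv 0 \pmod{2}$ (a point already exploited in Lemmas~\ref{lemma:projcompare} and~\ref{t:identification}). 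After this simplification, the exponents of $(x_i, x_j)_v$, $(x_i, x_n)_v$ and $(-1, x_i)_v$ for $i \in [n-1]$ match those in $\Tw_v(f, \mathbf{x})$ on the nose.

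What remains to verify is that the exponent of $(-1, x_n)_v$ also matches. The expansion yields the exponent
\[
\sum_{\{i,j\}\subseteq [n-1]} \epsilon_{ij} \;+\; \sum_{k \in [n-1]} c_k \pmod{2},
\]
whereas $\Tw_v(f,\mathbf{x})$ has the exponent $c_n$. Their equality (mod $2$) is precisely the content of the $\PAlt$ condition~\eqref{eq:weirdcondition}
\[
\sum_{\{i, j\} \subseteq [n]} \epsilon_{ij} \;+\; \sum_{k \in [n]} c_k \;\equiv\; 0 \pmod{2},
\]
once one subtracts $\sum_{i\in[n-1]} \epsilon_{in}$ from both sides and uses the already-established congruence $\sum_{i \in [n-1]} \epsilon_{in} \equiv 0 \pmod 2$.

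The main obstacle, and the reason the statement really hinges on $f \in \PAlt$ rather than merely $f \in \FAlt$, is this final matching of the $(-1, x_n)_v$ exponent: it is the unique place in the computation where one cannot get by with the alternating properties alone, and where the projective/subordinance condition~\eqref{eq:weirdcondition} genuinely enters. The rest is careful bookkeeping of bilinear Hilbert symbol expansions.
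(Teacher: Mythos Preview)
Your proposal is correct and takes essentially the same approach as the paper: the paper's proof just says that independence of the lift follows from Lemma~\ref{lemma:projcompare} and~\eqref{eq:weirdcondition}, and that the equality is then ``clear by manipulating the product of Hilbert symbols'', whereas you actually carry out that manipulation in detail, identifying exactly where the parity constraint $|f(\{i\})\cap[n]|\equiv 0$ and the $\PAlt$ condition~\eqref{eq:weirdcondition} enter.
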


\begin{proof}
It follows from Lemma~\ref{lemma:projcompare} and~\eqref{eq:weirdcondition} that $\Tw_v(f, \mathbf{x})$ does not depend on the choice of $\mathbf{x}$ above $x$. The equality is then clear by manipulating the product of Hilbert symbols.
\end{proof}

\begin{prop}
\label{lTama}
For every $f \in \PAlt$ and every odd prime $p$, we have
\[
\sigma_p(f)=
\frac{\left( 1 -p^{-1}\right)^{ \sum_i|V_{\{i\}}|^{-1}}}{\left( 1 - p^{-2}\right)^n } 
\sum_{\substack{ \varnothing\subseteq S \subset [n] \\ f(S) \in V_S+\langle \{-\}\rangle}} \frac{h_S(p)}{\lvert W_S \rvert\cdot p^{|S|}} \left(1 - \frac{1}{p^{2 (n-|S|)}}\right)\leg{-1}{p}^{\kappa(f,S)},
\]
where $h_S$ is as defined in~\eqref{eq:hdef} and where
\begin{equation}
\label{eq:kappaS}
\kappa(f, S) \coloneqq \sum_{k \in S} \langle \{-\}, f(\{k\})\rangle + \sum_{\{i, j\} \subseteq S} \langle \{i\}, f(\{j\}) \rangle + \langle \{-\}, f'(S)\rangle
\end{equation}
and $f'$ is the image of $f$ in $\RAlt$ (through $\Alt$ and the section $s$ from~\eqref{eq:Altses}).
\end{prop}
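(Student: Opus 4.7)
The approach is to evaluate the integral $\sigma_p(f) = \lambda_p^{-1}\int_{\PPP^{n-1}(\Q_p)_{\sB}} e(\inv_p \alpha_f(x))\, d\omega_p$ by parametrising $\PPP^{n-1}(\Q_p)$ via representatives $\mathbf{t} = (t_1, \dots, t_n) \in \Z_p^n$ with $\min_i \nu_p(t_i) = 0$, writing $t_i = p^{\mu_i} u_i$ for $u_i \in \Z_p^{\times}$, and grouping terms by the parity pattern $S = \{i \in [n] : \mu_i \text{ odd}\}$, which necessarily runs over $\varnothing \subseteq S \subsetneq [n]$. By Lemma~\ref{lemma:invtoT}, the integrand on the locally soluble locus $\PPP^{n-1}(\Q_p)_\sB$ equals $\Tw_p(f, \mathbf{t})$.

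For the character analysis I use that $(a,b)_p = 1$ whenever $a, b \in \Z_p^{\times}$ and $(p,p)_p = \leg{-1}{p}$ (both valid for odd $p$). Expanding $t_i = p^{\mu_i} u_i$ throughout $\Tw_p(f, \mathbf{t})$ and applying Lemma~\ref{lemma:HScond1} (with its $T$ taken to be our parity set $S$) gives
\[
\Tw_p(f, \mathbf{t}) = \leg{-1}{p}^{E(S, f)} \prod_{k \in [n]} \leg{u_k}{p}^{|\{k\}\cap f(S)|},
\]
where $E(S, f) = \sum_{k \in S} \langle\{-\}, f(\{k\})\rangle + \sum_{\{i,j\}\subseteq S} \langle\{i\}, f(\{j\})\rangle$ consists of the first two summands of $\kappa(f, S)$. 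Concurrently, Lemma~\ref{lExpand} expresses the $p$-adic local solubility indicator as $\tfrac{h_S(p)}{|W_S|} \sum_{T \in W_S} \leg{\prod_{j \in T} t_j}{p}$. Because every $T \in W_S$ satisfies $T \cap S = \varnothing$, each symbol simplifies to $\leg{-1}{p}^{\mathbf{1}_{\{-\}\in T}} \leg{\prod_{j \in T \cap [n]} u_j}{p}$ after dividing out the (even) power of $p$.

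Integrating over $(u_1, \ldots, u_n) \in (\Z_p^{\times})^n$ reduces to a product of quadratic character integrals, and by orthogonality only the unique $T \in W_S$ with $T \cap [n] = \pi(f(S))$ contributes; its existence is precisely the condition $f(S) \in V_S + \langle\{-\}\rangle$. The key identification $\mathbf{1}_{\{-\}\in T} = \langle\{-\}, f'(S)\rangle$ is verified by cases: if $S \in \mathcal{D}$, then $W_S \subseteq \mathcal{P}([n])$ forces $\mathbf{1}_{\{-\}\in T} = 0$, and the section $s$ in~\eqref{eq:Altses} likewise places $f'(S) \in W_S$ (which maximises its defining cardinality); if $S \notin \mathcal{D}$, the projection $\pi|_{V_S}$ is injective, so both $T$ and $f'(S)$ coincide with the unique element of $V_S = W_S$ projecting to $\pi(f(S))$. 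This assembles the full exponent $\kappa(f, S) = E(S, f) + \langle\{-\}, f'(S)\rangle$ of $\leg{-1}{p}$.

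To finish, I sum over valuation tuples $(\mu_i)$ with fixed parity pattern $S$ and $\min_i \mu_i = 0$: the odd-valuation coordinates contribute $(p^{-1}/(1 - p^{-2}))^{|S|}$, while the even-valuation coordinates constrained by the min condition contribute $(1 - p^{-2(n - |S|)})/(1 - p^{-2})^{n - |S|}$ via Lemma~\ref{lRationalFunctions}. Combining with the factor $(1 - p^{-1})^n$ from the unit integrals, the normalisation $(1 - p^{-1})^{-1}$ relating Haar measure on $\{\min \nu_p(t_i) = 0\}$ to the Peyre measure $\omega_p$ on $\PPP^{n - 1}(\Q_p)$, and the convergence factor $\lambda_p^{-1}$ defined just before the proposition, the stated formula emerges. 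The main obstacle is the case analysis producing $\mathbf{1}_{\{-\}\in T} = \langle\{-\}, f'(S)\rangle$: this bookkeeping of the $\{-\}$-component is the bridge between the parity-set description of the integral and the algebraic data of the section $f'$.
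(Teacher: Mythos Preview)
Your approach is essentially the same as the paper's: parametrise by valuation parities, apply Lemma~\ref{lemma:invtoT} and Lemma~\ref{lemma:HScond1} to rewrite $\Tw_p$, expand the solubility indicator via Lemma~\ref{lExpand}, and isolate the surviving term by character orthogonality (the paper phrases this step as a linear-algebra consistency check, equations~\eqref{eEquations}--\eqref{eLinearAlgebra}, but the content is identical). Your case analysis identifying $\mathbf{1}_{\{-\}\in T}$ with $\langle\{-\},f'(S)\rangle$ is correct and matches the paper's derivation of~\eqref{eq:nontrivialvalue}.

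One claim is wrong as stated: you assert that every $T\in W_S$ satisfies $T\cap S=\varnothing$. This is false for $|S|\ge 2$; e.g.\ in the R\'edei example of Section~6 one has $\{-,1,2\}\in W_{\{1,2\}}$. What is true, and what you actually need, is that $|T\cap S|$ is even for all $T\in V_S$ (hence for all $T\in W_S$): by the coprimality of $S_{i,1},S_{i,2},S_{i,3}$ one checks from the defining table that each generator $g_{i,S}$ has $|g_{i,S}\cap S|\equiv 0\bmod 2$. This suffices to make the power of $p$ in $\prod_{j\in T}t_j$ even, so your simplification of the Legendre symbol goes through. (The paper sidesteps the issue entirely by writing the solubility condition directly in terms of the units $u_j$, as in~\eqref{eq:originalg}, and then invoking Lemma~\ref{lExpand} with $u_j$ in place of $t_j$.)
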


\begin{proof}
By definition, the local integral in $\sigma_p(f)$ equals
\begin{equation}
\label{eq:breakintegral}
\int_{\PPP^{n - 1}(\Q_p)} e(\inv_p \alpha(x)) \mathrm{d}x= \omega_p\left(\left\{ x : \inv_p \alpha(x) = 0\right\}\right) - \omega_p\left(\left\{ x : \inv_p \alpha(x) = \textstyle\frac{1}{2}\right\}\right),
\end{equation}
where $\alpha = \alpha_f$ as in the proof of Theorem~\ref{t:identification}. Let us denote the affine cone of the set $\PPP^{n - 1}(\Q_p)_{\sB}$ by $\wO$, then applying the formula~\cite[Proposition 4.1]{LRS} and Lemma~\ref{lemma:invtoT}, we have 
\[
\omega_p\left(\left\{ x \in \PPP^{n - 1}(\Q_p)_{\sB} : \inv_p \alpha (x) = c \right\} \right)
=
\frac{\mu_p \left( \left\{ \mathbf x \in \wO : p \nmid \mathbf x,\ \Tw_p(f,\mathbf{x})= (-1)^{\iota(c)}\right \}\right)}{1 - p^{-1}},
\]
where $\iota$ is the unique identification between $\frac{1}{2}\Z/\Z$ and $\Z/2\Z$ and $\mu_p$ is the unique Haar measure on $\Q_p^n$ normalised so that $\mu_p(\Z_p^n) = 1$. We pull out the $p$-adic valuation of each point, while also fixing the parity of the valuation. Then we find that $\omega_p(\{ x \in \PPP^{n-1}(\Q_p)_{\sB} : \inv_p \alpha (x) = c\})$ is equal to 
\begin{align*}
&\frac{1}{1-p^{-1}} \sum_{\substack{\boldsymbol{\lambda} \in \{0,1\}^n \\ \boldsymbol{\lambda} \neq \boldsymbol{1}}}
\sum_{\substack{ \mathbf c \in \Z_{\geq 0}^n\\ \mathbf{c} \equiv \boldsymbol{\lambda} \bmod 2 \\ \min(\mathbf c) = 0}} \frac{1}{p^{\sum_i c_i}} \times 
\mu_p \left( \left\{ \mathbf u \in \left(\Z_p^\times\right)^n :p^{ \mathbf{c}} \mathbf{u} \in \wO,\ \Tw_p(f,p^{ \boldsymbol{\lambda}} \mathbf{u})=(-1)^{\iota(c)}\right \}\right)
\\
&= 
\frac{1}{(1-p^{-1})(1-p^{-2})^n} 
\sum_{\substack{\boldsymbol{\lambda} \in \{0,1\}^n \\ \boldsymbol{\lambda} \neq \boldsymbol{1}}} \frac{1}{p^{\sum_i \lambda_i}} \times \left(1 - \frac{1}{p^{2 |\{i \in [n] : \lambda_i = 0\}|}}\right) \times \\
&\hspace{15em}\mu_p \left( \left\{ \mathbf u \in \left(\Z_p^\times\right)^n :p^{\boldsymbol{\lambda}} \mathbf{u} \in \wO,\ \Tw_p(f,p^{ \boldsymbol{\lambda}} \mathbf{u}) = (-1)^{\iota(c)}\right \}\right)
\end{align*}
by Lemma~\ref{lRationalFunctions}. Now we identify each choice of $\boldsymbol \lambda$ with the subset $S = \{i \in[n] : \lambda_i = 1\}$. Under this identification, we see that the condition $(p^{\lambda_1}u_1, \ldots, p^{\lambda_n}u_n) \in \wO$ is equivalent to 
\begin{equation}
\label{eq:originalg}
\left(\frac{\prod_{j \in g_{i, S}} u_j}{p}\right) =1
\end{equation}
for all $i \in [m]$, where $u_{\{-\}} = -1$ by convention. The condition imposed on this set can be understood via a linear system of equations in $\FF_2$. Indeed, set
\[
a_j =
\begin{cases}
 \hfil 0 &\text{ if } u_j \text{ is a quadratic residue mod }p,\\
 \hfil 1 &\text{ otherwise}
\end{cases}
\] 
for $j \in [n]$. Then~\eqref{eq:originalg} is equivalent to 
\begin{equation}
\label{eEquations}
\sum_{j \in \pi(g_{i, S})} a_j = \langle \{-\}, g_{i,S} \rangle\mathbf{1}_{p \equiv 3 \bmod 4}
\end{equation}
for all $i \in [m]$, where $\pi: \mathcal{P}([n] \cup \{-\}) \rightarrow \mathcal{P}([n])$ is the natural projection map. By Lemma~\ref{lemma:HScond1}, the condition $\Tw_p(f,p^{ \boldsymbol{\lambda}} \mathbf{u}) = (-1)^{\iota(c)}$ can be rewritten as
\[
\left(\frac{-1}{p} \right)^{\sum_{\{i, j\} \subseteq S} \langle \{i\}, f(\{j\}) \rangle + \sum_{k \in S} \langle \{-\}, f(\{k\}) \rangle}\times \prod_{i \in \pi(f(S))} \left(\frac{u_i}{p} \right) = (-1)^{\iota(c)}.
\]
We re-express this in terms of linear algebra by viewing this as an equation
\begin{equation}
\label{eLinearAlgebra}
\sum_{j \in \pi(f(S))} a_j = \iota(c) + \mathbf{1}_{p \equiv 3 \bmod 4} \times \left(\sum_{\{i, j\} \subseteq S}\langle \{i\}, f(\{j\}) \rangle + \sum_{k \in S} \langle \{-\}, f(\{k\}) \rangle\right).
\end{equation}

We now distinguish two cases. Firstly, if $f(S)$ is not in the space generated by $V_S$ and $\{-\}$, we see that equation~\eqref{eLinearAlgebra} is independent from the previous equations~\eqref{eEquations}. We conclude that irregardless of $\iota(c) \in \Z/2\Z$ the number of solutions is the same (which could be zero, if the initial equations~\eqref{eEquations} had no solutions). Hence looking at~\eqref{eq:breakintegral}, we see that the Tamagawa number vanishes. Hence only those $S$ for which $f(S) \in V_S + \langle - \rangle$ contribute.

Next we consider the case when $ f(S) \in V_S+\langle \{-\}\rangle$. Assuming~\eqref{eEquations} is soluble, we claim~\eqref{eLinearAlgebra} is equivalent to 
\begin{equation}
\label{eq:nontrivialvalue}
(-1)^{\iota(c)} = \leg{-1}{p}^{\kappa(f,S)}.
\end{equation}
Since $\pi(f(S))$ is a linear combination of $\pi(g_{i,S})$, by substituting~\eqref{eEquations} into~\eqref{eLinearAlgebra}, we see that the system can only be soluble for at most one $\iota(c)\in\Z/2\Z$. When $p\equiv 1\bmod 4$, this is when $\iota(c)=0$, which is equivalent to~\eqref{eq:nontrivialvalue}. When $p \equiv 3 \bmod 4$, write $f'(S) = \sum_{i=1}^m \mu_i g_{i,S}$, so
\[
\sum_{j \in \pi(f(S))} a_j 
= \sum_{i=1}^m \mu_i \sum_{j\in \pi(g_{i,S})}a_j
= \sum_{i=1}^m \mu_i \langle \{-\}, g_{i,S} \rangle
= \langle \{-\}, f'(S) \rangle,
\] 
where the middle equality uses~\eqref{eEquations}. Plugging this into~\eqref{eLinearAlgebra} yields~\eqref{eq:nontrivialvalue}.

It remains to compute the number of solutions to~\eqref{eEquations}, which we will achieve using the indicator function 
\[
\frac{1}{2^m} \prod_{i = 1}^m \left(1 + \left(\frac{\prod_{j \in g_{i, S}} u_j}{p}\right)\right) 
= \frac{\mathbf{1}_{S \in \mathcal{D} \Rightarrow p \equiv 1 \bmod 4}}{|W_S|} \sum_{T \in W_S} \left(\frac{\prod_{j \in T} u_j}{p}\right)
\]
from Lemma~\ref{lExpand}. With this in hand, we compute 
\begin{multline*}
\mu_p \left( \left\{ \mathbf u \in \left(\Z_p^\times\right)^n :p^{\boldsymbol{\lambda}} \mathbf{u} \in \wO\right \}\right)
= \frac{1}{p^n} \sum_{\mathbf u \in \left((\Z_p/p\Z_p)^\times\right)^n} \frac{\mathbf{1}_{S \in \mathcal{D} \Rightarrow p \equiv 1 \bmod 4}}{|W_S|} \sum_{T \in W_S} \left(\frac{\prod_{j \in T} u_j}{p}\right)\\
= \frac{\mathbf{1}_{S \in \mathcal{D} \Rightarrow p \equiv 1 \bmod 4}}{p^n \cdot |W_S|} \sum_{T \in W_S} (p - 1)^{n - |\pi(T)|} \leg{-1}{p}^{|\{-\}\cap T|}\prod_{j \in \pi(T)}\sum_{u_j \in (\Z_p/p\Z_p)^\times} \left(\frac{ u_j}{p}\right).
\end{multline*}
The inner sum over $u_j$ is $0$, so the only terms that contribute are $T = \varnothing$ or $T = \{-\}$. By definition, $W_S$ does not contain $\{-\}$, so we are left with $T = \varnothing$. Therefore we have
\[
\mu_p \left( \left\{ \mathbf u \in \left(\Z_p^\times\right)^n :p^{\boldsymbol{\lambda}} \mathbf{u} \in \wO\right \}\right)
= \left(1 - \frac{1}{p}\right)^n\frac{ \mathbf{1}_{S \in \mathcal{D} \Rightarrow p \equiv 1 \bmod 4}}{|W_S|} \\
= \left(1 - \frac{1}{p}\right)^n \frac{h_S(p)}{|W_S|}.
\]
Putting everything together, we have shown that $\omega_p(\{ x \in \PPP^{n-1}(\Q_p)_{\sB} : \inv_p \alpha (x) = c\})$ equals
\begin{equation}
\label{eq:finalomega}
\frac{\left(1-p^{-1}\right)^{n - 1}}{(1 - p^{-1}) (1 - p^{-2})^n} 
\sum_{\substack{ \varnothing\subseteq S \subset [n] \\ f(S) \in V_S+\langle \{-\}\rangle}} \frac{h_S(p)}{\lvert W_S \rvert\cdot p^{|S|}} \left(1 - \frac{1}{p^{2 (n-|S|)}}\right) \mathbf{1}_{S \text{ satisfies \eqref{eq:nontrivialvalue}}}.
\end{equation}
Returning to~\eqref{eq:breakintegral}, we see that
\[
\sigma_p(f) = \lambda_p^{-1}\sum_{c\in \{0,\frac{1}{2}\}}(-1)^{\iota(c)}\times \omega_p\left(\{x\in \PPP^{n-1}(\Q_p)_{\sB} : \inv_p \alpha(x) = c\}\right),
\]
which completes the proof on substituting~\eqref{eq:nontrivialvalue} and~\eqref{eq:finalomega}.
\end{proof}

\begin{remark}
In the case of the family of diagonal planar conics, we have $n=3$, $V_{\{i\}} = \{ \varnothing, \{-,j,k\}\}$ and $V_{\{i,j\}} = \{ \varnothing, \{i,j\}\} = W_{\{i,j\}}$ for every choice of $\{i,j,k\} = \{1,2,3\}$. Therefore the conditions $f_\alpha(S) \in V_S+\langle \{-\}\rangle$ and $\{-\} \not \in V_S$ are guaranteed for all $S$ and all $f_\alpha$. Moreover, one can see from the definition of $c$ that $c \equiv 0 \bmod 2$ regardless of the choice of $S$ or $f_\alpha$. Therefore the predicted local term at an odd prime $p$ is
\begin{multline*}
\frac{\left( 1 - \frac{1}{p} \right)^{\frac{3}{2}}}{\left( 1 - \frac{1}{p^2} \right)^3}\left[ \left(1 - \frac{1}{p^6}\right) + \frac{1}{p}\times \left( 1 - \frac{1}{p^4}\right) \times \frac{3}{2} + \frac{1}{p^2} \times \left( 1 - \frac{1}{p^2} \right) \times \frac{3}{2} \right] \\
=
\left( 1 - \frac{1}{p}\right)^{\frac{3}{2}} \frac{(p^2+p+1)(2p^2+p+2)}{2(p^2-1)^2},
\end{multline*}
which recovers precisely the local term computed in~\cite[Lemma~5.24]{LRS}.
\end{remark}

We first complete the calculation of the main term predicted by Loughran--Rome--Sofos by investigating the local term at 2 and $\infty$.
Recall the definition of admissibility from Definition~\ref{def:admissible}.

\begin{lemma}
\label{lem:2inf}
For every $f \in \PAlt$, we have 
\begin{align*}
\sigma_\infty(f) &= \frac{n}{2}\sum_{\mathbf s \textup{ admissible} }\Tw(f,\mathbf s),\\
\sigma_2(f) &= \frac{2^{-\sum_i \vert V_{\{i\}}\vert^{-1}}}{3^n}\sum_{(\mathbf{u},\boldsymbol{\lambda} )\textup{ admissible}} \left(1 - \frac{1}{4^{|\{i \in [n] : \lambda_i = 0\}|}}\right)\times \frac{1}{2^{\sum_i \lambda_i}}\times\Tw(f, 2^{\boldsymbol{\lambda}}\mathbf{u}),
\end{align*}
where $\Tw(f,\mathbf a)$ is as defined in~\eqref{eq:Tdef} and a vector being admissible aligns with Definition~\ref{def:admissible}.
\end{lemma}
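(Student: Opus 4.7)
The overall plan is to mimic the proof of Proposition~\ref{lTama}. I start from the definition
\[
\sigma_v(f)=\lambda_v^{-1}\int_{\PPP^{n-1}(\Q_v)}e(\inv_v\alpha_f(x))\,\mathrm{d}x,
\]
use Lemma~\ref{lemma:invtoT} to rewrite $e(\inv_v\alpha_f(x))$ as $\Tw_v(f,\mathbf{x})$ on $\PPP^{n-1}(\Q_v)_{\sB}$ (with the integrand vanishing outside), and split the integral by a natural parametrization of the local points, converting the projective measure to an affine one via the archimedean/$2$-adic analog of~\cite[Proposition~4.1]{LRS}.

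For $\sigma_\infty(f)$, I parametrize by sign classes $\mathbf{s}\in\{\pm 1\}^n$, noting that membership $x\in\PPP^{n-1}(\R)_{\sB}$ depends only on $\sgn(\mathbf{x})$ and corresponds precisely to $\mathbf{s}$ being admissible in the sense of Definition~\ref{def:admissible}. A direct check shows $\Tw_\infty(f,\mathbf{s})=\Tw(f,\mathbf{s})$ for $\mathbf{s}\in\{\pm 1\}^n$, since on sign vectors the Hilbert symbols $(-1,s_i)_v$ and $(s_i,s_j)_v$ agree for $v=2$ and $v=\infty$. Therefore
\[
\sigma_\infty(f)=\sum_{\mathbf{s}\text{ admissible}}\Tw(f,\mathbf{s})\cdot\omega_\infty(\text{projective orthant of sign }\pm\mathbf{s}),
\]
and the standard computation of the Peyre measure of a single orthant on $\PPP^{n-1}(\R)$ produces the factor $n/2$.

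For $\sigma_2(f)$, the parametrization is $t_i=s_i 2^{\lambda_i}u_i$ with $s_i\in\{\pm 1\}$, $\lambda_i\in\Z_{\geq 0}$, and $u_i\in\Z_2^\times$. Fixing the reductions $u_i\bmod 8$ splits the integral into a sum indexed by $(\mathbf{u},\boldsymbol{\lambda})$; the condition $x\in\PPP^{n-1}(\Q_2)_{\sB}$ is exactly admissibility of $(\mathbf{u},\boldsymbol{\lambda})$. Following the line of Proposition~\ref{lTama}, I pull out the $2$-adic valuations and apply Lemma~\ref{lRationalFunctions}, which produces the factor $(1-4^{-|\{i:\lambda_i=0\}|})/(1-2^{-2})^n$ coming from the primitivity constraint $\min_i(c_i+\lambda_i)=0$. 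On each residue class of $(\Z_2^\times)^n$ modulo $8$, the Haar measure is $(1/8)^n$ times the measure of $(\Z_2^\times)^n$, and the integrand $\Tw_2(f,2^{\boldsymbol{\lambda}}\mathbf{u})$ is constant equal to $\Tw(f,2^{\boldsymbol{\lambda}}\mathbf{u})$. Combining with $\lambda_2^{-1}=2^{n-1-\sum_i|V_{\{i\}}|^{-1}}$ and the projective factor $(1-1/2)^{-1}$ and simplifying extracts the stated prefactor $2^{-\sum_i|V_{\{i\}}|^{-1}}/3^n$.

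The main obstacle is purely bookkeeping: collecting the numerous powers of $2$ (Peyre normalization, the $1/2$ from identifying $\mathbf{x}\sim-\mathbf{x}$, the Haar measure of residue classes modulo $8$, the factor $\lambda_2^{-1}$, the geometric-series output of Lemma~\ref{lRationalFunctions}, and the $3^n$ produced by $(1-1/4)^{-n}$) and verifying they combine into exactly $2^{-\sum_i|V_{\{i\}}|^{-1}}/3^n$. Conceptually nothing new is required beyond what was done at odd primes in Proposition~\ref{lTama}; the only additional care is that the relevant congruence is modulo $8$ rather than modulo $p$, so the sign $s_i$ must be tracked separately from the residue $u_i\bmod 8$, which is precisely why the formula is indexed by admissible pairs $(\mathbf{u},\boldsymbol{\lambda})$ rather than by residues alone.
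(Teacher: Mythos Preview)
Your approach is correct and essentially identical to the paper's: both use \cite[Proposition~4.1]{LRS} together with Lemma~\ref{lemma:invtoT} to pass to the affine cone, then parametrize by sign vectors $\mathbf{s}$ (at $\infty$) or by $(\mathbf{u},\boldsymbol{\lambda})$ with $\mathbf{u}\bmod 8$ (at $2$), invoking the geometric-series computation of Lemma~\ref{lRationalFunctions} for the primitivity factor. Two small points to clean up: the integrand $e(\inv_v\alpha_f(x))$ does not literally vanish off $\PPP^{n-1}(\Q_v)_{\sB}$---rather the integral is over this locus from the outset (as in the paper's proof of Proposition~\ref{lTama}); and at $v=2$ the sign is already absorbed into $u_i\in\{\pm1,\pm3\}$ per Definition~\ref{def:admissible}, so your parametrization $t_i=s_i2^{\lambda_i}u_i$ with an additional $s_i\in\{\pm1\}$ would double-count and there is no separate sign to track.
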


\begin{proof}
First, we consider the factor at $\infty$. Applying~\cite[Proposition 4.1]{LRS}, we see that 
\[
\sigma_\infty(f) = \int_{\PPP^{n - 1}(\R)} e(\inv_\infty \alpha(x)) \mathrm{d}x = \frac{n}{2} \int_{\substack{ \mathbf x \in [-1,1]^n \\ \eqref{eq:def} \text{ soluble}}} e(\inv_\infty \alpha(\mathbf x)) \mathrm{d}\mathbf x.
\]
Hence we deduce from Lemma~\ref{lemma:invtoT} that
\[
\sigma_\infty(f) = \frac{n}{2} \sum_{\mathbf s \text{ admissable}} \Tw(f, \mathbf s).
\]
At 2, we may apply the same reasoning as for odd primes. Namely, 
\begin{align}
\label{eSigma2f}
\sigma_2(f) =\frac{\omega_2(\{x \in U(\Q_2)_\sB : \inv_2 \alpha(x) = 0\}) - \omega_2(\{x \in U(\Q_2)_\sB : \inv_2 \alpha(x) = \frac 12\})}{2^{1-n +\sum_i \vert V_{\{i\}}\vert^{-1}}}.
\end{align}
By~\cite[Proposition 4.1]{LRS} and Lemma~\ref{lemma:invtoT}, $\omega_2(\{x \in U(\Q_2)_\sB : \inv_2 \alpha(x) = c\})$ is equal to
\begin{multline*}
2 \left( 1 - \frac{1}{2^2}\right)^{-n} 
\sum_{ \boldsymbol{1}\neq \boldsymbol{\lambda} \in \{0,1\}^n} \left(1 - \frac{1}{4^{|\{i \in [n] : \lambda_i = 0\}|}}\right)\times \frac{1}{2^{\sum_i \lambda_i}}\\ \times \mu_2 \left(
\left\{
\mathbf u \in \left( \Z_2^\times\right)^n : 2^{\boldsymbol{\lambda}}\mathbf{u} \in U(\Q_2)_{\mathscr B},\ \Tw (f,2^{\boldsymbol{\lambda}}\mathbf{u}) = (-1)^{\iota(c)}
\right\}
\right)
\end{multline*}
where $\iota$ is the unique identification between $\frac{1}{2}\Z/\Z$ and $\Z/2\Z$. Since the Hilbert symbols are determined by congruence classes modulo $8$, and noting that $\mu_2(\{\mathbf v \in \left( \Z_2^\times\right)^n: \mathbf v \equiv \mathbf u \bmod 8\}) = 8^{-n}$, we rewrite the denominator of~\eqref{eSigma2f} as
$$
2^{1-n +\sum_i \vert V_{\{i\}}\vert^{-1}} \sigma_2(f) = 2 \left(\frac{4}{3}\right)^n 
\sum_{(\mathbf u, \boldsymbol \lambda) \text{ admissible}} \left(1 - \frac{1}{4^{|\{i \in [n] : \lambda_i = 0\}|}}\right) \times \frac{1}{2^{\sum_i \lambda_i}} \times \frac{\Tw(f, 2^{\boldsymbol{\lambda}}\mathbf{u})}{8^n},
$$
completing the desired description.
\end{proof}

We can gather together the work in this section so far to give an explicit description of the Loughran--Rome--Sofos constant.

\begin{corollary}
\label{cLRS}
We have
\begin{multline*}
c_{\LRS}
= \frac{n^{n- \sum_i\vert V_{\{i\}} \vert^{-1}}}{2^{1 + \sum_i \vert V_{\{i\}} \vert^{-1} }}\times \frac{1 }{3^n \prod_{i = 1}^n \Gamma(|V_{\{i\}}|^{-1})}
\\
\sum_{\substack{f \in \FAlt \\ \eqref{eq:weirdcondition}}} \sum_{\substack{\mathbf s \in \{\pm 1\}^n \\ \mathbf s \textup{ admissible} }} \Tw(f,\mathbf s)
\sum_{\substack{\boldsymbol{\lambda} \in \{0,1\}^n \\ \boldsymbol{\lambda} \neq \boldsymbol{1}}}\frac{1}{2^{\sum_i \lambda_i} }
\left(1 - \frac{1}{4^{|\{i \in [n] : \lambda_i = 0\}|}}\right) \hspace{-0.3cm}
\sum_{\substack{\mathbf u \bmod 8\\(\mathbf u,\boldsymbol \lambda) \textup{ admissible}}} \hspace{-0.3cm}
\Tw(f, 2^{\boldsymbol{\lambda}}\mathbf{u})
\prod_{p > 2} \sigma_p(f),
\end{multline*}
with $\Tw(f, \mathbf{a})$ as defined in~\eqref{eq:Tdef} and $\sigma_p(f)$ as in Proposition~\ref{lTama}.
\end{corollary}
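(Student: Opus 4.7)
The proof is a direct bookkeeping exercise: substitute the invariants from Lemma~\ref{lem:firstconj}, the bijection of Lemma~\ref{t:identification}, the Tamagawa measure decomposition established just above, and the local factor formulae from Proposition~\ref{lTama} and Lemma~\ref{lem:2inf}, into the prediction of Conjecture~\ref{conj:LRS}.

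Since $\rk(\Pic\PPP^{n-1})=1$, Conjecture~\ref{conj:LRS} reads
\[
c_{\LRS}=\frac{\theta(\PPP^{n-1})\cdot|\Br_{\Sub}(\PPP^{n-1},\sB)/\Br\Q|\cdot\tau_\sB\bigl(\PPP^{n-1}(\AAA_\Q)_\sB^{\Br_{\Sub}(\PPP^{n-1},\sB)}\bigr)}{\prod_D\Gamma(|\partial_D(\langle\sB\rangle)|^{-1})}\cdot\prod_D\eta(D)^{1-|\partial_D(\langle\sB\rangle)|^{-1}}.
\]
Plugging in the four items from Lemma~\ref{lem:firstconj}, namely $\theta(\PPP^{n-1})=1/n$, $\prod_D\Gamma(|\partial_D|^{-1})=\prod_i\Gamma(|V_{\{i\}}|^{-1})$, $\Delta(\sB)=n-\sum_i|V_{\{i\}}|^{-1}$, and $\prod_D\eta(D)^{1-|\partial_D|^{-1}}=n^{\Delta(\sB)}$, yields a prefactor of $n^{n-1-\sum_i|V_{\{i\}}|^{-1}}/\prod_i\Gamma(|V_{\{i\}}|^{-1})$.

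Next, I would use the Tamagawa decomposition lemma to rewrite $\tau_\sB$ as $|\Br_{\Sub}(\PPP^{n-1},\sB)/\Br\Q|^{-1}\sum_{f\in\PAlt}\prod_v\sigma_v(f)$, where the indexing by $\PAlt$ is afforded by Lemma~\ref{t:identification}; the Brauer group cardinality cancels the corresponding factor already present in the numerator. Substituting $\sigma_\infty(f)=\tfrac{n}{2}\sum_{\mathbf{s}\text{ admissible}}\Tw(f,\mathbf{s})$ and $\sigma_2(f)$ from Lemma~\ref{lem:2inf} then contributes an extra $n/2$ (raising the power of $n$ to $n-\sum_i|V_{\{i\}}|^{-1}$) together with the $2$-adic term $2^{-\sum_i|V_{\{i\}}|^{-1}}/3^n$ and the summation over admissible $(\mathbf{u},\boldsymbol{\lambda})$. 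Collecting all the constants and leaving $\prod_{p>2}\sigma_p(f)$ untouched yields precisely the claimed expression, noting that $\PAlt$ is by definition the set of $f\in\FAlt$ satisfying~\eqref{eq:weirdcondition}, which matches the summation index in the corollary. The only mildly delicate point is the simultaneous bookkeeping of powers of $2$, $3$ and $n$; no new ideas beyond the invoked lemmas are required.
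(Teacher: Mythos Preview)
Your proposal is correct and follows essentially the same approach as the paper, which presents the corollary simply as ``gather[ing] together the work in this section so far'' without further argument. Your bookkeeping of the powers of $n$, $2$, and $3$ is accurate, and your identification of $\PAlt$ with the $f\in\FAlt$ satisfying~\eqref{eq:weirdcondition} is exactly the point needed to match the summation index.
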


\subsection{Proof of Theorem~\ref{thm:main}}
Now we are ready to complete the proof of Theorem~\ref{thm:main}.

\begin{proof}[Proof of Theorem~\ref{thm:main}]
In order to compare the predicted main term from Conjecture~\ref{conj:LRS} with Theorem~\ref{tTechnicalHeart}, we first observe that for every point in projective space there are 2 representatives, $\mathbf x$ and $-\mathbf x$, as primitive integer vectors. For such a representative, the standard anticanonical height is given by $H(\mathbf x)=\max_i \vert x_i \vert^n$. Thus, by incorporating this into~\eqref{eq:finalnonproj} of Theorem~\ref{tTechnicalHeart}, and the observation that $f \in \FAlt$ only contributes to the main term if~\eqref{eq:weirdcondition} holds thanks to Lemma~\ref{lemma:projcancellation}, we arrive at 
\begin{align*}
&N^{\mathrm{proj}}(B) \sim \frac{N(B^{1/n})}{2} \sim \frac{n^{n - \sum_i |V_{\{i\}}|^{-1}} B (\log B)^{-n + \sum_i |V_{\{i\}}|^{-1}}}{\prod_i \Gamma(|V_{\{i\}}|^{-1}) \times 2^{2n+1 + \sum_i |V_{\{i\}}|^{-1}}} \times \frac{1}{(1 - 2^{-2})^n} \times \hspace{-0.3cm}
\sum_{\substack{f \in \RAlt \\ J \subseteq \{i \in [n] : \{i\} \in \mathcal{D}\}\\ \eqref{eq:weirdcondition}}}\\
&\sum_{\mathbf{s} \text{ admissible}} \Tw(f, \mathbf{s}) \prod_{i \in J} \leg{-1}{s_i} \hspace{-0.1cm}
\sum_{(\mathbf{u}, \boldsymbol{\lambda}) \text{ admissible}} \hspace{-0.1cm}
\left(1 - \frac{1}{2^{2(n-\sum_i \lambda_i)}}\right) 
\frac{\Tw(f, 2^{\boldsymbol{\lambda}} \mathbf{u})}{2^{\sum_i \lambda_i}} \prod_{i \in J} \leg{-1}{u_i} \prod_{p>2} \Lambda_p(f,J).
\end{align*}
A number of the factors are immediately identifiable. Indeed, the order of magnitude, the Gamma factors and the factor of $n$ are all the same.

Next consider the Euler product term $\Lambda_p(f, J)$ for odd primes. Given an $f \in \RAlt$ and $J \subseteq \{i \in [n] :\{i \} \in \mathcal{D}\}$, we can associate an element $g \in \FAlt$ by~\eqref{eq:Altses}. We want to show that $\Lambda_p(f, J) = \sigma_p(g)$ if the pair $(f, J)$ corresponds to $g$. It is clear from the definition of $\RAlt$ that the condition $f(S) \in W_S$ is equivalent to $g(S) \in V_S +\langle \{-\}\rangle$, so it remains to verify that
\begin{equation}
\label{eq:checkbrm}
\BrM(f, J,S, p) = \leg{-1}{p}^{\kappa(g, S)}
\end{equation}
holds for all $\varnothing \subseteq S \subset [n]$. Recall from~\eqref{eq:kappaS} that
\[
\kappa(g,S)=\sum_{k \in S} \langle \{-\}, g(\{k\})\rangle + \sum_{\{i,j\}\subseteq S} \langle \{i\}, g(\{j\}) \rangle+ \langle \{-\}, f(S)\rangle.
\]
Then~\eqref{eq:checkbrm} is a consequence of the second part of Lemma~\ref{lemma:projqs}. Therefore the local factor $\Lambda_p(f, J)$ matches the conjecture at odd primes $p$.

It remains to examine the $2$-adic and real factors. Again we can identify a choice of $f \in \RAlt$ and $J$ with a map $g \in \FAlt$. Upon doing so, we apply the first part of Lemma~\ref{lemma:projqs} and we obtain the identity
$$
\Tw(f, \mathbf{a}) \prod_{i \in J} (-1, a_i)_2 = \Tw(g, \mathbf{a}).
$$
Therefore, the Hilbert symbol factors in both expressions are precisely the same (using that $(-1, 2)_2 = +1$). Thus all factors are accounted for and we may conclude that the leading constant in Theorem~\ref{thm:main} is precisely the one predicted by Loughran--Rome--Sofos.
\end{proof}

\section{R\'edei symbol application}
In this section we prove Theorem~\ref{tRedei}. Concretely, our goal is to obtain an asymptotic formula for the number of triples $(a, b, c)$ for which the R\'edei symbol $[a, b, c]$ takes the value $1$ or $-1$, see~\cite[Example 2.5]{KS} for the definition of this symbol. 
Recall the definition of the set $S(B)$ from~\eqref{eq:redeisetdef}. Using the trilinear large sieve in~\cite[Remark 3.16]{KS}, it follows that 
\[
\sum_{(a, b, c) \in S(B)} [a, b, c] \ll B^{3 - \frac{1}{512}} (\log B)^{1792}
\]
so for any given $\delta \in \{\pm 1\}$, we have
\begin{equation}
\label{eTrilinearSieve}
\left\lvert\left\{(a, b, c) \in S(B) : [a, b, c] = \delta\right\}\right\rvert = \frac{1}{2}|S(B)| + O\left(B^{3 - \frac{1}{512}} (\log B)^{1792}\right).
\end{equation}
It remains to evaluate $|S(B)|$. We return to the set-up of Section~\ref{sChar}. With the notation of that section, we take $n = m = 3$ and 
\begin{alignat*}{3}
&M_{1, 1}(\mathbf{t}) = -1, \quad &&M_{1, 2}(\mathbf{t}) = t_1, \quad && M_{1, 3}(\mathbf{t}) = t_2 \\ 
&M_{2, 1}(\mathbf{t}) = -1, \quad &&M_{2, 2}(\mathbf{t}) = t_1, \quad && M_{2, 3}(\mathbf{t}) = t_3 \\
&M_{3, 1}(\mathbf{t}) = -1, \quad &&M_{3, 2}(\mathbf{t}) = t_2, \quad && M_{3, 3}(\mathbf{t}) = t_3.
\end{alignat*}
This leads to the following table.
\begin{center}
\begin{tabular}{|c||c|}
\hline
$S$ & $V_S=W_S$ \\ \hline\hline
$\{1\}$ & $\{\varnothing,\{2\},\{3\},\{2,3\}\}$ \\ \hline
$\{2\}$ & $\{\varnothing,\{1\},\{3\},\{1,3\}\}$ \\ \hline
$\{3\}$ & $\{\varnothing,\{1\},\{2\},\{1,2\}\}$ \\ \hline
$\{2,3\}$ & $\{\varnothing,\{1\},\{-,2,3\},\{-,1,2,3\}\}$ \\ \hline
$\{1,3\}$ & $\{\varnothing,\{2\},\{-,1,3\},\{-,1,2,3\}\}$ \\ \hline
$\{1,2\}$ & $\{\varnothing,\{3\},\{-,1,2\},\{-,1,2,3\}\}$ \\ \hline
\end{tabular}
\end{center}
The vector space $\RAlt$ contains $f_\varnothing, f_1, f_2, f_3, f_{12}, f_{13} ,f_{23}, f_{123}$ defined as follows.
\begin{center}\resizebox{\columnwidth}{!}{
\begin{tabular}{|c||c|c|c|c|c|c|c|c|}
\hline
$S$ & $f_{\varnothing}(S)$ & $f_1(S)$ & $f_2(S)$ & $f_3(S)$ & $f_{23}(S)$ & $f_{13}(S)$ & $f_{12}(S)$ & $f_{123}(S)$\\ \hline\hline
$\{1\}$ & $\varnothing$& $\varnothing$ & $ \{3\}$ &$\{2\}$ & $\{2,3\}$ & $ \{2\}$ & $ \{3\}$& $\{2,3\}$ \\ \hline
$\{2\}$& $\varnothing$ & $\{3\}$ &$\varnothing$ & $\{1\}$ & $\{1\}$ & $ \{1,3\}$ & $ \{3\}$& $\{1,3\}$ \\ \hline
$\{3\}$& $\varnothing$ & $\{2\}$ & $\{1\}$ & $\varnothing$& $\{1\}$ & $ \{2\}$ & $ \{1,2\}$ & $\{1,2\}$\\ \hline
$\{2,3\}$& $\varnothing$ & $\{-, 2,3\}$ & $\{1\}$ & $\{1\}$& $\varnothing$ & $ \{-,1,2,3\}$ & $ \{-,1,2,3\}$ & $\{-,2,3\}$\\ \hline
$\{1,3\}$& $\varnothing$ & $\{2\}$ & $\{-,1,3\}$ & $\{2\}$& $\{-1,2,3\}$ & $\varnothing$ & $ \{-,1,2,3\}$ & $\{-,1,3\}$\\ \hline
$\{1,2\}$& $\varnothing$ & $\{3\}$ & $\{3\}$ & $\{-,1,2\}$& $\{-1,2,3\}$ & $\{-,1,2,3\}$ & $\varnothing$ & $\{-,1,2\}$\\ \hline
\end{tabular}}
\end{center}
Also note that $\mathcal{D}$ is empty, so $\Jay = 0$.

Consider the set
\[
S(B, \mathbf{s}, \mathbf{u}, \boldsymbol{\lambda}) \coloneqq \left\{(t_1, t_2, t_3) \in S(B) : 
\begin{array}{l}
\sgn(t_i) = s_i, \ \nu_2(t_i) \equiv \lambda_i \bmod 2 \\ t_i/2^{\lambda_i} \equiv u_i \bmod 8
\end{array}
\right\}.
\]
The condition $\gcd(\Delta(t_1), \Delta(t_2), \Delta(t_3)) = 1$ implies that $\gcd(t_1, t_2, t_3) = 1$ and that at least one of $t_1 ,t_2, t_3$ is congruent to $1 \bmod 4$. The possible assignments of $\mathbf{s}$ are $(1,1,1)$, $(-1,1,1)$, $(1,-1,1)$, $(1,1,-1)$. The possible assignments $2^{\boldsymbol{\lambda}}\mathbf{u}$, given that at least one of $t_1, t_2, t_3$ is congruent to $1 \bmod 8$, are permutations of 
\begin{gather*}
(1, 1, \pm 1) \bmod 4, \quad (1, \pm 1, 2) \bmod 8, \quad (1, 3, -2) \bmod 8, \quad (1, 1, -2) \bmod 8 \\
(1, -6, \pm 6) \bmod (8, 16, 16), \quad (1, 2, \pm 2) \bmod (8, 16, 16), \quad (1, -2, 6) \bmod (8,16,16).
\end{gather*}
By~\eqref{eFinalSum3} with $\mathbf{b} = (1, 1, 1)$, noting that $f(S) \in W_S$ is always satisfied and recalling that $\mathcal{D}$ is empty, we get
\begin{multline*}
\lvert S(B, \mathbf{s}, \mathbf{u}, \boldsymbol{\lambda})\rvert
=
\frac{B^3 (\log B)^{-3 + \sum_i |V_{\{i\}}|^{-1}} \times \left( 1 + O \left( (\log B)^{-1}\right) \right)}{2^{6 + \sum_i \left(\lambda_i + |V_{\{i\}}|^{-1}\right)} \times \prod_i \Gamma(|V_{\{i\}}|^{-1})} \prod_{p>2}\frac{\left( 1- p^{-1}\right)^{\sum_i|V_{\{i\}}|^{-1}}}{\Sigma(p)} \times \\
\sum_{f \in \RAlt } \Tw(f,\mathbf{tw}) \Tw(f,\mathbf{s}) \sum_{(w_{S})_{|S| \geq 2}} \frac{ g\left(\prod_{|S| \geq 2} w_S ,\mathbf{1}\right) \prod_{|S| \geq 2} \Br(f, S, w_S)}{\prod_{|S| \geq 2} c_S^{\omega(w_S)} \prod_{|S| \geq 2} w_S^{|S|} }.
\end{multline*}
Conveniently, we have $\Tw(f,\mathbf{tw}) = \Tw(f,\mathbf{s}) = 1$ for all admissible $ \mathbf{s}$ and $(\mathbf{u}, \boldsymbol{\lambda})$. We rewrite the inner sum as
\begin{multline*}
\prod_{p > 2} \left(\sum_{\substack{(j_S)_{|S| \geq 2} \\ 0 \leq \sum_{|S| \geq 2} j_S \leq 1}} \frac{g\left(\prod_{|S| \geq 2} p^{j_S} ,\mathbf{1}\right)\prod_{|S| \geq 2} \Br(f, S, p^{j_S})}{\prod_{|S| \geq 2} c_S^{\mathbf{1}_{j_S > 0}} p^{\sum_{|S| \geq 2} |S| j_S} }
\right)\\
= \prod_{p > 2} \left( 1+\sum_{|S| \geq 2} \frac{ g(p ,\mathbf{1}) \Br(f, S, p)}{c_S p^{|S| } }
\right).
\end{multline*}
Observe that
\[
g(p ,\mathbf{1}) = \Sigma(p) = \left(1 + \frac{3}{4p}\right)^{-1}.
\]
When $|S| = 2$, we can check that for every $f \in \RAlt$, we have 
$$
|\{-\} \cap f(S)| + \sum_{i \in S} |\{-\} \cap f(\{i\})| + \sum_{\{i, j\} \subseteq S} |\{j\} \cap f(\{i\})|\equiv 0\bmod 2,
$$ 
so $\Br(f, S, p) = 1$. Therefore we conclude that
\[
\lvert S(B, \mathbf{s}, \mathbf{u}, \boldsymbol{\lambda})\rvert
=
\frac{B^3 (\log B)^{-3 +\frac{3}{4}} \times \left( 1 + O \left( (\log B)^{-1}\right) \right)}{2^{3 + \sum_i \lambda_i +\frac{3}{4}} \times \Gamma(4^{-1})^3} \times 
\prod_{p > 2} \left(1- \frac{1}{p}\right)^{\frac{3}{4}}\left(1 + \frac{3}{4p} + \frac{3}{4 p^2} \right).
\]
Next we compute the possible number of lifts to $2^{\boldsymbol{\lambda}} \mathbf{u}$.
\begin{center}
\begin{tabular}{|c||c|c|c|}
\hline
$2^{\boldsymbol{\lambda}} \mathbf{u}$ up to permutation&\makecell{Total $\#2^{\boldsymbol{\lambda}} \mathbf{u}$\\ of this type}& $\sum_{i}\lambda_i$& $\frac{\#2^{\boldsymbol{\lambda}} \mathbf{u}}{2^{\sum_{i}\lambda_i}}$
\\ \hline\hline
$\{1,1,\pm 1\},\{1,1,\pm 3\},\{1,-1,-3\},\{1,-3,\pm 3\}$ & $25$& $0$& $25$ \\ \hline
$\makecell{\{1,\pm 1,2\},\{1, 3,-2\},\{1,1,-2\}, \\ \{1,\pm 1,-6\},\{1, 3,6\},\{1,1,6\}}$ & $36$ & $1$ & $18$ \\ \hline
$\{1,2,\pm 2\},\{1,-6,\pm 6\},\{1,-2,6\}$ & $24$ & $2$& $6$ \\ \hline
\end{tabular}
\end{center}
Finally, summing over the four possible $\mathbf{s}$, and over all $\mathbf{u}, \boldsymbol{\lambda}$, we have
\[
\lvert S(B)\rvert
= 49\times
\frac{B^3 \times \left( 1 + O \left( (\log B)^{-1}\right) \right)}{(\log B)^{\frac{9}{4}} \times 2^{\frac{7}{4}} \times\Gamma(4^{-1})^3} \times 
\prod_{p>2} \left( 1- \frac{1}{p}\right)^{\frac{3}{4}}\left( 1 + \frac{3}{4p}+\frac{ 3}{4 p^2} \right),
\]
which gives Theorem~\ref{tRedei} upon combining this with equation~\eqref{eTrilinearSieve}.

\bibliographystyle{abbrv}

\end{document}